\newtheorem{Theorem}{Theorem}[section]
\theoremstyle{plain}
\newtheorem{Lemma}[Theorem]{Lemma}
\newtheorem{Proposition}[Theorem]{Proposition}
\numberwithin{equation}{section}
\title{Fluctuations of the rightmost particle in the catalytic branching Brownian motion}
\author{Sergey Bocharov\footnote{S.Bocharov:  
Department of Mathematics, Zhejiang University, Zheda Road, Hangzhou 310027, China, 
e-mail: bocharov@zju.edu.cn. The author is supported by NSFC grant (No.11731012)}}
\begin{document}
\maketitle
\begin{abstract}
In this article we establish the magnitude of fluctuations of the extreme particle in the model of binary branching Brownian motion with a single catalytic point at the origin.
\end{abstract}

%====================================================================================================
\section{Introduction}

%----------------------------------------------------------------------------------------------------
\subsection{Description of the model}

Catalytic branching Brownian motion (BBM) is a spatial population model in which individuals, referred to as `particles', move in space according to the law of a standard Brownian motion and reproduce themselves at a spatially-inhomogeneous rate $\beta \delta_0(\cdot)$, where $\beta$ is some positive constant and $\delta_0$ is the Dirac delta measure.

In our model we start with one particle at some initial location $x_0 \in \mathbb{R}$ at time $0$. The position of this particle at time $t \geq 0$ up until the time when it dies is given by a standard Brownian motion $(X_t)_{t \geq 0}$. If $(L_t)_{t \geq 0}$ is the local time at the origin of the process $(X_t)_{t \geq 0}$ then at a random time $T$ satisfying 
\[
\mathbb{P}^{x_0} \big(T > t \big\vert (X_s)_{s \geq 0} \big) = \mathrm{e}^{- \beta L_t},
\]
the initial particle dies and is replaced with two new particles, which independently of each other and of the past stochastically continue the behaviour of their parent starting at time $T$ and position $X_T = 0$. That is, they move in space as Brownian motions, die after random times giving birth to two new particles each and so on.

%----------------------------------------------------------------------------------------------------
\subsection{Notation and earlier results}

%We label particles according to the Ulam-Harris convention. In this way, the initial particle is denoted by $\varnothing$, its two children are denoted by $1$ and $2$, children of particle $1$ are denoted by $11$ and $12$ and so on in this manner. For particles $u$ and $v$ we write $u < v$ if $u$ is an ancestor of $v$. So, for example, $11 < 1121$. 

We let $N_t$ denote the set of all the particles alive at time $t$. For each of the particles $u \in N_t$ we let $X^u_t$ be its spatial position at the given time $t$ and $(X^u_s)_{0 \leq s \leq t}$ its path up to time $t$.

We let $(\mathcal{F}_t)_{t \geq 0}$ be the natural filtration of the branching process. We also denote the law of the branching process initiated from $x_0$ by $P^{x_0}$ with the corresponding expectation $E^{x_0}$. When $x_0 = 0$ we would write $P$ and $E$ rather than $P^0$ and $E^0$.

We define 
\[
R_t := \sup_{u \in N_t} X^u_t
\]
to be the position of the rightmost particle at time $t$. We are interested in the asymptotic behaviour of $R_t$ as $t \to \infty$. It was already shown in \cite{BH14} that
\begin{equation}
\label{rightmost_as}
\frac{R_t}{t} \to \frac{\beta}{2} \qquad P^{x_0} \text{-a.s.}
\end{equation}
and then later in \cite{BH16} that 
\begin{equation}
\label{rightmost_law}
R_t - \frac{\beta}{2}t \Rightarrow W,
\end{equation}
where
\[
P^{x_0} (W \leq x) = E^{x_0} \Big[ \exp \big\{ - \mathrm{e}^{- \beta x} M_\infty \big\} \Big] \quad \text{ , } x \in \mathbb{R}.
\]
Here $M_\infty$ is the almost sure limit of the ``additive" martingale
\begin{equation}
\label{add_mart}
M_t = \mathrm{e}^{- \frac{\beta^2}{2}t} \sum_{u \in N_t} \mathrm{e}^{- \beta |X^u_t|}
\end{equation}
and it is known from \cite{BH14} that $M_\infty > 0$ $P^{x_0}$-almost surely.

Let us mention that versions of \eqref{rightmost_as} and \eqref{rightmost_law} for BBM with branching rate given by a continuous function decaying sufficiently fast at infinity were proved in \cite{E84} and \cite{LS88} respectively. Versions of \eqref{rightmost_as} and \eqref{rightmost_law} for BBM with branching rate given by measures decaying sufficiently fast at infinity were proved in \cite{S18} and \cite{NS19} respectively. Versions of \eqref{rightmost_as} and \eqref{rightmost_law} for discrete-time catalytic branching random walks on $\mathbb{Z}$ were proved in \cite{CH14}. There are also numerous versions of \eqref{rightmost_as} and \eqref{rightmost_law} for models with ``spatially-homogeneous" branching.
%----------------------------------------------------------------------------------------------------
\subsection{The main result}

As a trivial corollary to \eqref{rightmost_law} one gets that for any function $f(\cdot)$ with $\lim_{t \to \infty} f(t) = \infty$, 
\[
\frac{R_t - \frac{\beta}{2}t}{f(t)} \to 0 \qquad \text{ in } P^{x_0} \text{ probability}.
\]
This is a rather weak statement and one should really be interested in the almost sure asymptotic behaviour of $\frac{R_t - \frac{\beta}{2}t}{f(t)}$ for different functions $f(\cdot)$, which is summarised in the following theorem.
\begin{Theorem}
\label{main}
Take $x_0 \in \mathbb{R}$ and let $R_t$ be as above then 
\begin{equation}
\label{main_limsup}
\limsup_{t \to \infty} \frac{R_t - \frac{\beta}{2}t}{\log t} = \frac{1}{\beta} \qquad P^{x_0} \text{-a.s.}
\end{equation}
and
\begin{equation}
\label{main_liminf}
\liminf_{t \to \infty} \frac{R_t - \frac{\beta}{2}t}{\log \log t} = - \frac{1}{\beta} \qquad P^{x_0} \text{-a.s.}
\end{equation}
\end{Theorem}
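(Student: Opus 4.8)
The backbone of the argument is the many-to-one lemma for catalytic BBM: for a nonnegative path functional $F$,
$$E^{x_0}\Big[\sum_{u\in N_t}F\big((X^u_s)_{s\le t}\big)\Big]=E^{x_0}\big[e^{\beta L_t}F\big((X_s)_{s\le t}\big)\big],$$
where $(X_s)$ is a standard Brownian motion started at $x_0$ and $(L_s)$ its local time at the origin (this is the identity underlying \eqref{add_mart}). Feeding in the explicit joint law $\mathbb{P}^0(L_t\in\diffd\ell,X_t\in\diffd x)=\tfrac{\ell+|x|}{\sqrt{2\pi t^3}}e^{-(\ell+|x|)^2/(2t)}\diffd\ell\,\diffd x$ and optimising gives the key first-moment estimate
$$E^{x_0}\big[\#\{u\in N_t:X^u_t>\tfrac\beta2 t+z\}\big]\asymp c(x_0)\,e^{-\beta z}\qquad(|z|\ll t),$$
with, notably, no polynomial correction in $t$ (consistent with the absence of a logarithmic correction in \eqref{rightmost_law}). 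For the lower bounds I also need a many-to-two formula; here the distinguished pair of spines coalesces at a branch event, and branch events occur along a spine at rate $\beta\,\diffd L_s$, i.e.\ only at the origin, which renders the pair correlation a convergent integral over the coalescence time. I would isolate these as preliminary lemmas.

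For $\limsup\le\tfrac1\beta$: fix $\eps>0$. Markov's inequality and the first-moment estimate give $P^{x_0}(R_n>\tfrac\beta2 n+(\tfrac1\beta+\tfrac\eps2)\log n)\le Cn^{-1-\beta\eps/2}$, so $R_n\le\tfrac\beta2 n+(\tfrac1\beta+\tfrac\eps2)\log n$ eventually by Borel--Cantelli. To cover $t\in[n,n+1]$ I condition on $\F_n$, use the branching Markov property, and apply the many-to-one bound to the running maximum of a spine over a unit interval: the Gaussian cost $e^{-(\text{gap to the curve})^2/c}$ of a particle near the front crawling up to the curve, integrated against the first-moment intensity $E^{x_0}[e^{\beta L_n};X_n\le m_n]$, is $o(n^{-1})$, so $P^{x_0}(\sup_{[n,n+1]}R_t>\tfrac\beta2 n+(\tfrac1\beta+\eps)\log n,\ R_n\le m_n)$ is summable; a second Borel--Cantelli and $\eps\downarrow0$ finish the upper half of \eqref{main_limsup}. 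For $\limsup\ge\tfrac1\beta$ the first moment of $Z_n:=\#\{u\in N_n:X^u_n>\tfrac\beta2 n+(\tfrac1\beta-\eps)\log n\}$ is only $\asymp n^{\beta\eps-1}\to0$, so Paley--Zygmund merely yields $P^{x_0}(Z_n\ge1)\gtrsim n^{-1}$ (using that the second moment is also $\asymp$ the square of the first, by many-to-two); since $\sum n^{-1}=\infty$, a Kochen--Stone second Borel--Cantelli, with $P^{x_0}(\{Z_m\ge1\}\cap\{Z_n\ge1\})$ controlled for $m<n$ by a two-time many-to-two estimate, together with a $0$--$1$ law, gives $R_n>\tfrac\beta2 n+(\tfrac1\beta-\eps)\log n$ infinitely often a.s. (Various conditionings on $\F_{t_{k-1}}$ exploiting $M_\infty>0$ are possible instead, but the cross-scale dependence must be confronted regardless.)

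For \eqref{main_liminf} the crucial input is a lower-deviation bound for the front: for $h$ up to order $\log\log t$,
$$P^{x_0}\big(R_t<\tfrac\beta2 t-h\big)\asymp\exp\!\big(-c\,e^{\beta h}\big).$$
The upper bound (yielding $\liminf\ge-\tfrac1\beta$) reflects that the $\asymp e^{\beta h}$ particles expected in the band of width $h$ below the front act as nearly-independent feeders, so that none of them reaching $\tfrac\beta2 t-h$ is doubly exponentially unlikely; rigorously this is a truncated-second-moment (Chebyshev) estimate on that particle count, or a comparison with independent BBMs restarted at a slightly earlier time $s$ carrying $\asymp e^{\beta^2 s/2}M_\infty$ mass near the origin. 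With $h=(\tfrac1\beta+\tfrac\eps2)\log\log t$ this is summable over integers; the discrete-to-continuous passage uses that the front carries $\gtrsim\log n$ particles within $(\tfrac1\beta+\eps)\log\log n$ of $R_n$ eventually a.s.\ (so $\inf_{[n,n+1]}R_t\ge R_n-O(1)$ off a summable event), which follows from a near-front convergence statement $Z_n/E^{x_0}[Z_n]\to cM_\infty$. The matching lower bound $P^{x_0}(R_t<\tfrac\beta2 t-h)\ge c'e^{-C e^{\beta h}}$, now with $h=(\tfrac1\beta-\eps)\log\log t$, fails to be summable, and the same Kochen--Stone / $0$--$1$ machinery produces the events infinitely often, giving the lower half of \eqref{main_liminf}.

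The main obstacle is the lower-deviation estimate $P^{x_0}(R_t<\tfrac\beta2 t-h)\asymp\exp(-\Theta(e^{\beta h}))$ — in particular its lower half and its deployment in the infinitely-often direction — together with the dependence control in the two second-Borel--Cantelli arguments; in effect every sharp constant rests on the extremal process of catalytic BBM near the front being a (decorated) Poisson process of intensity $\propto e^{-\beta z}\diffd z$, which no soft argument supplies and which must be quantified by hand.
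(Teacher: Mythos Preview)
Your outline is correct and tracks the paper's proof closely: many-to-one and many-to-two lemmas, Paley--Zygmund on sums (which is your Kochen--Stone in this setting), a $0$--$1$ law, and Borel--Cantelli, with the two-time second-moment estimates being the crux in both ``infinitely often'' directions. Two execution differences are worth flagging. First, instead of a two-step discrete-to-continuous passage, the paper works directly with particle sets carrying $\sup_{[n,n+1]}$ or $\inf_{[n,n+1]}$ constraints inside the many-to-one identity, so your near-front law of large numbers $Z_n/E[Z_n]\to cM_\infty$ is a stronger input than needed and can be avoided entirely. Second, for both halves of \eqref{main_liminf} the paper conditions on $\mathcal{F}_{s_n}$ at an intermediate time $s_n$ (chosen so that $M_{s_n}$ is close to its limit while $e^{-\beta^2 s_n/2}$ still kills the relevant powers of $\log n$) and then applies a \emph{conditional} Paley--Zygmund; this turns your lower-deviation heuristic $P(R_t<\tfrac\beta2 t-h)\asymp\exp(-\Theta(e^{\beta h}))$ into rigorous upper and lower bounds without invoking any extremal-process or decorated-Poisson structure.
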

In particular, we also get the lighter version of \eqref{main_liminf}:
\begin{equation}
\label{liminf_trivial}
\liminf_{t \to \infty} \frac{R_t - \frac{\beta}{2}t}{\log t} = 0 \qquad P^{x_0} \text{-a.s.}
\end{equation}
Let us mention that versions of Theorem \ref{main} have been known for models with ``spatially-homogeneous" branching (see, for example, \cite{HS09}, \cite{R13} or \cite{RSZ19}) but, to the best of our knowledge, there has not so far been any corresponding result for a model with ``spatially-inhomogeneous" branching.
%----------------------------------------------------------------------------------------------------
\subsection{Outline of the paper}

The rest of the paper is organised as follows. In Section 2 we present some preliminary simple results, many of which are known from earlier work. Section 3 contains the proof of Theorem \ref{main}, which is divided in four parts: in Subsection 3.1 it is proved that
\[
\limsup_{t \to \infty} \frac{R_t - \frac{\beta}{2}t}{\log t} \leq \frac{1}{\beta} \qquad P^{x_0} \text{-a.s.},
\]
in Subsection 3.2 it is proved that
\[
\limsup_{t \to \infty} \frac{R_t - \frac{\beta}{2}t}{\log t} \geq \frac{1}{\beta} \qquad P^{x_0} \text{-a.s.},
\]
in Subsection 3.3 it is proved that
\[
\liminf_{t \to \infty} \frac{R_t - \frac{\beta}{2}t}{\log \log t} \geq - \frac{1}{\beta} \qquad P^{x_0} \text{-a.s.}
\]
and in Subsection 3.4 it is proved that
\[
\liminf_{t \to \infty} \frac{R_t - \frac{\beta}{2}t}{\log \log t} \leq - \frac{1}{\beta} \qquad P^{x_0} \text{-a.s.}
\]

%====================================================================================================
\section{Preliminaries}

It is a common practice to extend the original probability space of the branching system by adding the spine process to it. The spine is an infinite line of descent which begins with the initial particle and whenever the particle presently in the spine dies one of its two children is chosen with probability $\frac{1}{2}$ to continue the spine independently of all the previous history. 

If we then let $\tilde{P}$ denote the extension of the original probability measure $P$ to this bigger probability space and if at every $t \geq 0$ we let $\xi_t$ denote the spatial position of the spine particle at time $t$ then one can see that the process $(\xi_t)_{t \geq 0}$ is a Brownian motion under $\tilde{P}$. If we let $(\tilde{L}_t)_{t \geq 0}$ denote the local time of $(\xi_t)_{t \geq 0}$ at the origin then the following result is known to hold (see e.g. \cite{BH14}, Theorem 5).
\begin{Lemma}[Many-to-one lemma]
\label{many_to_one}
Take $x_0 \in \mathbb{R}$ and $t \geq 0$. Then for $f : \mathcal{C}\big([0, t]\big) \to [0, \infty)$ a Borel measurable function we have
\begin{equation}
\label{eq_many_to_one}
E^{x_0} \Big[ \sum_{u \in N_t} f\big( (X^u_s )_{0 \leq s \leq t} \big) \Big] = \tilde{E}^{x_0} \Big[ f\big( (\xi_s )_{0 \leq s \leq t} \big) \mathrm{e}^{\beta \tilde{L}_t} \Big].
\end{equation}
\end{Lemma}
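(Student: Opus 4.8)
The plan is to prove \eqref{eq_many_to_one} through the spine decomposition, made explicit for the catalytic branching mechanism. I would first enlarge the probability space as described above: starting from a realisation of the branching system under $P^{x_0}$, select an infinite line of descent, the \emph{spine}, by an independent fair coin toss at each branch event, and write $\tilde{P}^{x_0}$ for the enlarged law, $(\xi_t)_{t\ge 0}$ for the spatial trajectory of the spine particle, $(\tilde{L}_t)_{t\ge 0}$ for its local time at the origin, and $n_t$ for the number of branch events along the spine in $[0,t]$. Since a particle's Brownian motion is unaffected by its branching and every branch occurs at the origin, $(\xi_t)_{t\ge 0}$ is a standard Brownian motion under $\tilde{P}^{x_0}$ and the local times of the consecutive spine segments concatenate to $(\tilde{L}_t)_{t\ge 0}$. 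The key structural fact, which I would establish by applying the strong Markov property successively at the branch times of the spine, is that conditionally on the whole path $(\xi_s)_{s\ge 0}$ the branch times of the spine form a Poisson point process with intensity measure $\beta\,\mathrm{d}\tilde{L}_t$; this is precisely the defining relation $\tilde{P}^{x_0}(T>t\mid(X_s))=\mathrm{e}^{-\beta L_t}$ iterated down the spine, each segment restarting from $0$ and re-randomising independently.

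Granting this, two short steps finish the proof. First, the combinatorial many-to-one identity
\[
E^{x_0}\Big[\sum_{u\in N_t} f\big((X^u_s)_{0\le s\le t}\big)\Big] = \tilde{E}^{x_0}\Big[2^{\,n_t}\, f\big((\xi_s)_{0\le s\le t}\big)\Big]
\]
holds because $\tilde{P}^{x_0}(\text{the spine is at }u\mid\mathcal{F}_t)=2^{-n_u}$ for each $u\in N_t$ with $n_u$ branch points on its ancestral line up to time $t$, and $\sum_{u\in N_t}2^{-n_u}=1$ by the (almost sure) finiteness of $N_t$; conditioning the right-hand side on $\mathcal{F}_t$ and summing over the position of the spine cancels $2^{\,n_u}$ against $2^{-n_u}$ and recovers the left-hand side (I would prove this first for bounded $f$ and extend to general nonnegative Borel $f$ by monotone convergence, and use that $\tilde{P}^{x_0}$ restricted to $\mathcal{F}_t$ is $P^{x_0}$). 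Second, conditionally on $(\xi_s)_{s\le t}$ the count $n_t$ is Poisson with mean $\beta\tilde{L}_t$, so the Poisson probability generating function gives $\tilde{E}^{x_0}[2^{\,n_t}\mid\sigma(\xi_s:s\le t)]=\mathrm{e}^{\beta\tilde{L}_t}$, whence
\[
\tilde{E}^{x_0}\Big[2^{\,n_t}\, f\big((\xi_s)_{0\le s\le t}\big)\Big] = \tilde{E}^{x_0}\Big[ f\big((\xi_s)_{0\le s\le t}\big)\,\mathrm{e}^{\beta\tilde{L}_t}\Big],
\]
which is \eqref{eq_many_to_one}.

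The step I expect to be the main obstacle is the rigorous verification of the Poisson structure of the branch times along the spine: that the ``first branch only'' prescription $\tilde{P}(T>t\mid(X_s))=\mathrm{e}^{-\beta L_t}$, iterated through children that restart at the origin and re-randomise independently, genuinely assembles into a single Poisson point process driven by the local time of the entire spine path. This is where the strong Markov property at the branch times does the work, and also where one must check that $\mathrm{d}\tilde{L}$ is almost surely non-atomic and locally finite, so that the point process is simple and locally finite and $n_t<\infty$. Everything downstream — the combinatorial cancellation and the generating-function computation — is then routine. An alternative route avoiding the explicit spine, namely proving \eqref{eq_many_to_one} first for $f$ depending on finitely many time coordinates by conditioning on the first branch time and identifying both sides with the unique solution of the resulting renewal (Feynman–Kac) equation, and then extending by a monotone class argument, would also work; but the spine argument is cleaner and is the one I would write up.
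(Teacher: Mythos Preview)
The paper does not prove this lemma at all; it simply quotes it as a known result with a reference to \cite{BH14}, Theorem~5. Your spine-decomposition argument is the standard route to many-to-one identities and is correct in outline: the combinatorial step $\tilde{E}^{x_0}[2^{n_t}f((\xi_s))]=E^{x_0}[\sum_{u\in N_t}f((X^u_s))]$ via $\tilde{P}^{x_0}(\text{spine at }u\mid\mathcal{F}_t)=2^{-n_u}$ is exactly right, and the conditional-Poisson computation $\tilde{E}^{x_0}[2^{n_t}\mid\sigma(\xi_s:s\le t)]=\mathrm{e}^{\beta\tilde{L}_t}$ is the heart of the catalytic case. This is almost certainly the argument in the cited reference as well, so there is no genuine methodological difference to report---you have supplied what the paper outsources.

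Your own identification of the delicate point is accurate: the only place requiring care is the assembly of the successive branch times into a single Poisson process driven by $\beta\,\mathrm{d}\tilde{L}$, which leans on the strong Markov property at each branch time together with the additivity of local time and the fact that each child restarts at the origin. The local-time measure is automatically non-atomic and locally finite for Brownian motion, so simplicity and local finiteness of the point process are not an issue.
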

A typical application of Lemma \ref{many_to_one} is to calculate the expected number of particles at time $t$ whose paths satisfy a certain condition. For example, for $x \in \mathbb{R}$ and $t \geq 0$ let us define 
\[
N_t^x := \big\{ u \in N_t : X^u_t \geq x \big\}
\]
to be the set of particles at time $t$ which lie above level $x$. Then 
\[
E^{x_0} |N_t^x| = E^{x_0} \Big[ \sum_{u \in N_t} \mathbf{1}_{\{X^u_t \geq x\}} \Big] = \tilde{E}^{x_0} \Big[ \mathrm{e}^{\beta \tilde{L}_t} \mathbf{1}_{\{ \xi_t \geq x \}} \Big]
\]
and below we give the exact expression for this expectation.
\begin{Proposition}
\label{expectation}
For any $x_0 \in \mathbb{R}$, $t \geq 0$ and $x \geq 0$
\begin{align}
\label{eq_expectation}
E^{x_0} |N_t^x| = &\tilde{E}^{x_0} \Big[ \mathrm{e}^{\beta \tilde{L}_t} \mathbf{1}_{\{ \xi_t \geq x \}} \Big]\nonumber\\
= &\mathrm{e}^{- \beta |x_0| - \beta x + \frac{\beta^2}{2}t} \Phi \Big( \frac{\beta t - |x_0| - x}{\sqrt{t}} \Big) 
+ \Big[ \Phi \Big( \frac{x + x_0}{\sqrt{t}} \Big) - \Phi \Big( \frac{x - x_0}{\sqrt{t}} \Big) \Big] \mathbf{1}_{\{x_0 \geq 0\}},
\end{align}
where $\Phi(z) = \int_{- \infty}^z \frac{1}{\sqrt{2 \pi}} \mathrm{e}^{- \frac{y^2}{2}} \mathrm{d}y$, $z \in \mathbb{R}$ is the cumulative distribution function of a $\mathcal{N}(0, 1)$ random variable.
\end{Proposition}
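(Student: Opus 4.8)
The plan is to compute the expectation $\tilde{E}^{x_0}\big[\mathrm{e}^{\beta \tilde{L}_t}\mathbf{1}_{\{\xi_t\geq x\}}\big]$ directly, using the fact that under $\tilde{P}^{x_0}$ the spine $(\xi_s)_{0\le s\le t}$ is a standard Brownian motion started at $x_0$ and $\tilde{L}_t$ is its local time at $0$. The natural tool is the joint law of $(\xi_t,\tilde{L}_t)$, which is classical: for a Brownian motion started at $a$, the pair (position at time $t$, local time at $0$ up to time $t$) has an explicit density that can be obtained from the reflection principle together with the first-passage decomposition at $0$. I would split according to whether the path has already hit $0$ by time $t$ or not. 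On the event that $0$ is never hit before time $t$, we have $\tilde{L}_t=0$, and the contribution is simply $\tilde{P}^{x_0}(\xi_t\geq x,\ \tau_0>t)$ where $\tau_0=\inf\{s:\xi_s=0\}$; since $x\geq 0$, this term is nonzero only when $x_0\geq 0$, and by the reflection principle it equals $\Phi\big(\tfrac{x+x_0}{\sqrt t}\big)-\Phi\big(\tfrac{x-x_0}{\sqrt t}\big)$ — precisely the last term in \eqref{eq_expectation}.

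On the complementary event $\{\tau_0\le t\}$, I would condition on $\tau_0$: by the strong Markov property the remaining path is a Brownian motion started at $0$, so the problem reduces to the case $x_0=0$. For $x_0=0$ one needs $\tilde{E}^{0}\big[\mathrm{e}^{\beta \tilde{L}_t}\mathbf{1}_{\{\xi_t\geq x\}}\big]$. Here I would use the well-known representation: by symmetry and the fact that $(|\xi_s|,\tilde{L}_s)$ is distributed as (reflecting BM, its local time), L\'evy's theorem identifies $(\tilde{L}_s,|\xi_s|)$ in law with $(\sup_{r\le s}B_r,\ \sup_{r\le s}B_r-B_s)$ for a standard BM $B$. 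Thus $\{\xi_t\geq x\}$ (for $x\geq 0$) together with a sign choice translates the computation into one involving the running maximum of a Brownian motion, whose joint density with the terminal value is $\tfrac{2(2m-y)}{\sqrt{2\pi}\,t^{3/2}}\exp\{-\tfrac{(2m-y)^2}{2t}\}$ for $m\geq \max(y,0)$. Integrating $\mathrm{e}^{\beta m}$ against this density over the appropriate region and completing the square in the exponent produces the Gaussian integral $\mathrm{e}^{\frac{\beta^2}{2}t-\beta x}\Phi\big(\tfrac{\beta t-x}{\sqrt t}\big)$, which is the $x_0=0$ case of the first term.

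To assemble the general $x_0$ case I would then re-incorporate the first-passage step: $\tilde{E}^{x_0}\big[\mathrm{e}^{\beta\tilde L_t}\mathbf 1_{\{\xi_t\ge x\}};\tau_0\le t\big]=\int_0^t \tilde{P}^{x_0}(\tau_0\in ds)\,g_{t-s}(x)$, where $g_r(x)=\mathrm{e}^{\frac{\beta^2}{2}r-\beta x}\Phi\big(\tfrac{\beta r-x}{\sqrt r}\big)$ is the quantity just computed, and the density of $\tau_0$ under $\tilde{P}^{x_0}$ is the standard $\tfrac{|x_0|}{\sqrt{2\pi}\,s^{3/2}}\mathrm{e}^{-x_0^2/(2s)}$. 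The remaining task is to verify that this convolution integral evaluates to $\mathrm{e}^{-\beta|x_0|-\beta x+\frac{\beta^2}{2}t}\Phi\big(\tfrac{\beta t-|x_0|-x}{\sqrt t}\big)$; this is a routine but slightly delicate exercise in changing variables and recognising a Gaussian integral, and I expect this last bookkeeping step — keeping track of the $\Phi$ argument through the convolution — to be the main computational obstacle. An alternative, perhaps cleaner, route that avoids the convolution is to use the Girsanov/$h$-transform interpretation: $\mathrm{e}^{\beta\tilde L_t}$ can be absorbed by a change of measure after exploiting the Ray–Knight or Kac-type identity for $\mathrm{e}^{\beta\tilde L_t}$, under which $|\xi|$ becomes a Brownian motion with constant drift $\beta$ towards/away from the origin; then \eqref{eq_expectation} follows from the explicit transition density of drifting Brownian motion killed/reflected at $0$. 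I would present whichever of these is shorter, but both rest on the same classical ingredients (reflection principle, first-passage density, L\'evy's identity), so the argument is elementary throughout.
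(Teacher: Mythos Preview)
Your approach is correct and genuinely different from the paper's. The paper does not compute $\tilde{E}^{x_0}[\mathrm{e}^{\beta\tilde{L}_t}\mathbf{1}_{\{\xi_t\ge x\}}]$ from scratch: it quotes from \cite{B20} the integral representation
\[
E^{x_0}|N_t^x|=\int_x^\infty \tfrac{1}{\sqrt{2\pi t}}\,\mathrm{e}^{-\frac{(y-x_0)^2}{2t}}\,\mathrm{d}y
+\mathrm{e}^{-\beta|x_0|+\frac{\beta^2}{2}t}\int_x^\infty \beta\,\mathrm{e}^{-\beta y}\,\Phi\!\Big(\tfrac{\beta t-|x_0|-y}{\sqrt{t}}\Big)\,\mathrm{d}y
\]
and then performs a single integration by parts on the second integral to reach \eqref{eq_expectation}. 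Your route --- split on $\{\tau_0>t\}$ via the reflection principle, handle $x_0=0$ through L\'evy's identity and the joint density of $(\sup B,B)$, then re-insert a general $x_0$ by convolving with the first-passage density --- is self-contained and does not rely on the external reference, at the cost of more computation. One practical remark: the convolution step you flag as the main obstacle becomes almost immediate if you work at the density level. Differentiating your target in $x$ and using
\[
\int_0^t \frac{|x_0|}{\sqrt{2\pi s^3}}\,\mathrm{e}^{-\frac{x_0^2}{2s}}\cdot\frac{1}{\sqrt{2\pi(t-s)}}\,\mathrm{e}^{-\frac{x^2}{2(t-s)}}\,\mathrm{d}s
=\frac{1}{\sqrt{2\pi t}}\,\mathrm{e}^{-\frac{(x+|x_0|)^2}{2t}}
\]
(which is just the reflection principle again), the two $x$-derivatives match; since both sides vanish as $x\to\infty$, the convolution identity follows without any delicate bookkeeping.
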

In particular, it is always true that
\begin{equation}
\label{eq_expectation00}
E^{x_0} |N_t^x| \leq \mathrm{e}^{- \beta |x_0| - \beta x + \frac{\beta^2}{2}t} + 1 - \Phi \Big( \frac{x - x_0}{\sqrt{t}} \Big) = \mathrm{e}^{- \beta |x_0| - \beta x + \frac{\beta^2}{2}t} + \Phi \Big( - \frac{x - x_0}{\sqrt{t}} \Big)
\end{equation}
and in the special case when $x_0 = 0$ it is true that
\begin{equation}
\label{eq_expectation0}
E |N_t^x|= \mathrm{e}^{- \beta x + \frac{\beta^2}{2}t} \Phi \Big( \frac{\beta t - x}{\sqrt{t}} \Big) \leq 
\mathrm{e}^{- \beta x + \frac{\beta^2}{2}t}.
\end{equation}
\begin{proof}[Proof of Proposition \ref{expectation}]
Proposition \ref{expectation} was essentially proved in \cite{B20} (see Proposition 4). It was already shown there in a slightly more general setting that 
\[
E^{x_0} |N_t^x| = \int_x^\infty \frac{1}{\sqrt{2 \pi t}} \mathrm{e}^{- \frac{1}{2}(y - x_0)^2} \mathrm{d}y + 
\mathrm{e}^{- \beta |x_0| + \frac{\beta^2}{2}t} \int_x^{\infty} \beta \mathrm{e}^{- \beta y} \Phi \Big( \frac{\beta t - |x_0| - y}{\sqrt{t}} \Big) \mathrm{d}y
\]
A simple application of the integration-by-parts formula to the last integral gives us 
\begin{align*}
E^{x_0} |N_t^x| = &\int_x^\infty \frac{1}{\sqrt{2 \pi t}} \mathrm{e}^{- \frac{1}{2t}(y - x_0)^2} \mathrm{d}y\\ 
&+ \mathrm{e}^{- \beta |x_0| + \frac{\beta^2}{2}t} \bigg( \Big[ - \mathrm{e}^{-\beta y} \Phi \Big( \frac{\beta t - |x_0| - y}{\sqrt{t}} \Big) \Big]_x^\infty - \int_x^\infty \mathrm{e}^{- \beta y} \frac{1}{\sqrt{2 \pi t}} \mathrm{e}^{- \frac{1}{2t} (\beta t - |x_0| - y)^2} \mathrm{d}y\bigg)\\
= &\int_x^\infty \frac{1}{\sqrt{2 \pi t}} \mathrm{e}^{- \frac{1}{2t}(y - x_0)^2} \mathrm{d}y\\
&+ \mathrm{e}^{- \beta |x_0| - \beta x + \frac{\beta^2}{2}t} \Phi \Big( \frac{\beta t - |x_0| - x}{\sqrt{t}} \Big) - \int_x^\infty \frac{1}{\sqrt{2 \pi t}} \mathrm{e}^{- \frac{1}{2t}(y + |x_0|)^2} \mathrm{d}y,
\end{align*}
which yields the required result.
\end{proof}
Also, with the careful use of symmetry we may deduce from \eqref{eq_expectation} that
\begin{equation}
\label{exp_population}
E^{x_0} |N_t| = \tilde{E}^{x_0} \big[ \mathrm{e}^{\beta \tilde{L}_t} \big] = 2 \mathrm{e}^{-\beta |x_0| + \frac{\beta^2}{2}t} \Phi \Big( \beta \sqrt{t} - \frac{|x_0|}{\sqrt{t}}\Big) + \Phi \Big(\frac{|x_0|}{\sqrt{t}} \Big) - \Phi \Big( - \frac{|x_0|}{\sqrt{t}} \Big).
\end{equation}
In particular, it is always true that 
\begin{equation}
\label{ineq_exp}
E^{x_0} |N_t| \leq 1 + 2 \mathrm{e}^{- \beta |x_0| + \frac{\beta^2}{2}t}
\end{equation}
and in the special case when $x_0 = 0$ it is true that
\begin{equation}
\label{ineq_exp1}
E |N_t| = 2 \mathrm{e}^{\frac{\beta^2}{2}t} \Phi(\beta \sqrt{t}) \leq 2 \mathrm{e}^{\frac{\beta^2}{2}t}.
\end{equation}
The next general result, among other things, allows us to calculate the second moment of $|N_t^x|$ (for the proof see \cite{BH16}, Lemma 2.3  or \cite{S08}, Lemma 3.3). 
\begin{Lemma}[Many-to-two lemma]
\label{many_to_two}
Take $x_0 \in \mathbb{R}$ and $t \geq 0$. Then for $f : \mathbb{R} \to [0, \infty)$ and 
$g : \mathbb{R} \to [0, \infty)$ Borel measurable functions we have
\begin{equation}
\label{eq_many_to_two}
E^{x_0} \Big[ \Big( \sum_{u \in N_t} f(X^u_t) \Big) \Big( \sum_{u \in N_t} g(X^u_t) \Big) \Big] = S^{x_0}_{fg}(t) + 
2 \tilde{E}^{x_0} \Big[ \int_0^t S^0_f(t - \tau) S^0_g(t - \tau) \mathrm{d} \big( \mathrm{e}^{\beta \tilde{L}_\tau} \big) \Big],
\end{equation}
where 
\[
S^{x_0}_f(t) = E^{x_0} \Big[ \sum_{u \in N_t} f (X^u_t ) \Big],
\]
which can be computed using Lemma \ref{many_to_one}.
\end{Lemma}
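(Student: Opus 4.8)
The natural approach is to split the double sum into a diagonal and an off-diagonal part. Writing
\[
\Big(\sum_{u\in N_t}f(X^u_t)\Big)\Big(\sum_{u\in N_t}g(X^u_t)\Big)=\sum_{u\in N_t}(fg)(X^u_t)+\sum_{u\neq v}f(X^u_t)g(X^v_t),
\]
the diagonal term has expectation $S^{x_0}_{fg}(t)$ — this is the very definition of $S^{x_0}_{fg}(t)$, and it may be made explicit via Lemma~\ref{many_to_one} — so it produces the first term on the right-hand side of \eqref{eq_many_to_two}. Everything therefore reduces to showing that the off-diagonal term has expectation $2\,\tilde{E}^{x_0}\big[\int_0^t S^0_f(t-\tau)S^0_g(t-\tau)\,\diffd(\mathrm{e}^{\beta\tilde{L}_\tau})\big]$.

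For the off-diagonal term I would group the ordered pairs $(u,v)$ with $u\neq v$ according to their most recent common ancestor $w$: the particle whose death splits the ancestral lines of $u$ and $v$ into two children $a_1,a_2$, one of which is an ancestor of (or equal to) $u$ and the other an ancestor of (or equal to) $v$. The structural fact that makes this model tractable is that particles reproduce only at the origin, so $w$ is at position $0$ at its death time $\tau=\tau_w$, which necessarily satisfies $\tau_w<t$. Conditioning on $\mathcal{F}_{\tau_w}$ and invoking the branching property, the two subtrees rooted at $a_1$ and at $a_2$ are independent copies of the process started from $0$ and run for time $t-\tau_w$; summing over the two ways of matching $a_1,a_2$ with the roles of $u,v$, the conditional expectation of the contribution of the pairs with common ancestor $w$ equals $2\,S^0_f(t-\tau_w)\,S^0_g(t-\tau_w)$. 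Hence the off-diagonal expectation equals $2\,E^{x_0}\big[\sum_{w:\,\tau_w<t}S^0_f(t-\tau_w)S^0_g(t-\tau_w)\big]$, the sum being over all branching particles $w$ in the genealogical tree that die before time $t$.

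It then remains to evaluate expected sums over branching events, and for this I would establish a \emph{many-to-one lemma for the skeleton}: for every Borel $\psi:[0,t]\to[0,\infty)$,
\[
E^{x_0}\Big[\sum_{w:\,\tau_w<t}\psi(\tau_w)\Big]=\tilde{E}^{x_0}\Big[\int_0^t\psi(\tau)\,\diffd\big(\mathrm{e}^{\beta\tilde{L}_\tau}\big)\Big],
\]
after which the choice $\psi(\tau)=S^0_f(t-\tau)S^0_g(t-\tau)$ finishes the proof. (As a consistency check, $\psi\equiv1$ gives $E^{x_0}|N_t|-1$ on both sides, using $|N_t|=1+\#\{w:\tau_w<t\}$ on the left and \eqref{exp_population} on the right.) This identity is proved by a first-moment computation: the expected number of branching events in $[s,s+\diffd s]$ is $\beta\,\diffd s$ times the density at $0$ of the expected empirical measure $E^{x_0}\big[\sum_{u\in N_s}\delta_{X^u_s}\big]=\tilde{E}^{x_0}\big[\mathrm{e}^{\beta\tilde{L}_s}\,\delta_{\xi_s}\big]$ (Lemma~\ref{many_to_one}), and the right-hand side above is a rewriting of the resulting integral using the occupation-density property of Brownian local time; alternatively it follows from the usual one-spine change of measure, exactly as in Lemma~\ref{many_to_one}. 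I expect the main obstacle to be the rigorous bookkeeping of the off-diagonal sum over a random genealogical tree, i.e.\ justifying the interchange of the random sum over $w$ with the conditional expectations; this is precisely the content of the skeleton identity and of the many-to-few formalism underlying the references cited for this lemma. A self-contained but more computational alternative is to condition on the first branching time $T_1$, write a renewal equation for $E^{x_0}\big[(\sum_u f)(\sum_u g)\big]$ in terms of the law of $T_1$ and of $S^0_f,S^0_g$, and verify that the right-hand side of \eqref{eq_many_to_two} solves it.
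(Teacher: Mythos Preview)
The paper does not actually prove this lemma; it simply refers the reader to \cite{BH16}, Lemma~2.3 and \cite{S08}, Lemma~3.3. Your approach --- split into diagonal and off-diagonal, group off-diagonal pairs by their most recent common ancestor, exploit that every branching event occurs at the origin so the two subtrees restart from $0$, and finally rewrite the expected sum over branching times via the skeleton identity $E^{x_0}\big[\sum_{w:\tau_w<t}\psi(\tau_w)\big]=\tilde{E}^{x_0}\big[\int_0^t\psi(\tau)\,\diffd(\mathrm{e}^{\beta\tilde{L}_\tau})\big]$ --- is exactly the standard route those references follow, and the argument is correct. The only point worth tightening is the justification of the skeleton identity: your heuristic via the density of the expected empirical measure at $0$ is right in spirit, but the clean derivation is to note that $\diffd(\mathrm{e}^{\beta\tilde{L}_\tau})=\beta\,\mathrm{e}^{\beta\tilde{L}_\tau}\,\diffd\tilde{L}_\tau$ and then apply Lemma~\ref{many_to_one} to $\sum_{u\in N_\tau}\beta\,\diffd L^u_\tau$, which is the compensator of the branching-event counting process.
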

The following inequality is useful when we need to estimate the second term in \eqref{eq_many_to_two}.
\begin{Proposition}
\label{expectation2}
For any $x_0 \in \mathbb{R}$ and $t \geq 0$
\begin{equation}
\label{eq_expectation2}
\tilde{E}^{x_0} \Big[ \int_0^t \mathrm{e}^{- \beta^2 \tau} \mathrm{d} \big( \mathrm{e}^{\beta \tilde{L}_\tau} \big) \Big] \leq 4 \mathrm{e}^{- \beta |x_0|}.
\end{equation}
\end{Proposition}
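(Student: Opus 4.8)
The plan is to turn the Stieltjes integral against the nondecreasing process $\mathrm{e}^{\beta\tilde{L}_\tau}$ into ordinary Lebesgue integrals by integration by parts, and then to feed in the first-moment estimate \eqref{ineq_exp}. Set $A_\tau := \mathrm{e}^{\beta \tilde{L}_\tau}$. Since the local time $(\tilde{L}_\tau)_{\tau \ge 0}$ of the Brownian motion $\xi$ is continuous and nondecreasing with $\tilde{L}_0 = 0$, the process $A$ is continuous, nondecreasing, and $A_0 = 1$; in particular it has finite variation on $[0,t]$ and no jumps. As $\tau \mapsto \mathrm{e}^{-\beta^2\tau}$ is $C^1$, the classical integration-by-parts formula applies pathwise and gives, for every $t \ge 0$,
\[
\int_0^t \mathrm{e}^{-\beta^2\tau}\,\mathrm{d}\big(\mathrm{e}^{\beta\tilde{L}_\tau}\big) = \mathrm{e}^{-\beta^2 t}\,\mathrm{e}^{\beta\tilde{L}_t} - 1 + \beta^2\int_0^t \mathrm{e}^{-\beta^2\tau}\,\mathrm{e}^{\beta\tilde{L}_\tau}\,\mathrm{d}\tau .
\]

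Next I would take $\tilde{E}^{x_0}$ on both sides. The left-hand side is nonnegative, so its expectation is well defined in $[0,\infty]$. On the right-hand side, recall from \eqref{exp_population} that $\tilde{E}^{x_0}\big[\mathrm{e}^{\beta\tilde{L}_\tau}\big] = E^{x_0}|N_\tau|$, which is finite for every $\tau$, and by Tonelli's theorem $\tilde{E}^{x_0}\big[\int_0^t \mathrm{e}^{-\beta^2\tau}\mathrm{e}^{\beta\tilde{L}_\tau}\,\mathrm{d}\tau\big] = \int_0^t \mathrm{e}^{-\beta^2\tau}E^{x_0}|N_\tau|\,\mathrm{d}\tau$; the bound \eqref{ineq_exp} makes the integrand here at most $\mathrm{e}^{-\beta^2\tau} + 2\mathrm{e}^{-\beta|x_0|}\mathrm{e}^{-\frac{\beta^2}{2}\tau}$, so this integral is finite. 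Hence all three terms have finite expectation and
\[
\tilde{E}^{x_0}\Big[\int_0^t \mathrm{e}^{-\beta^2\tau}\,\mathrm{d}\big(\mathrm{e}^{\beta\tilde{L}_\tau}\big)\Big] = \mathrm{e}^{-\beta^2 t}\,E^{x_0}|N_t| - 1 + \beta^2\int_0^t \mathrm{e}^{-\beta^2\tau}\,E^{x_0}|N_\tau|\,\mathrm{d}\tau .
\]

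Finally I would substitute $E^{x_0}|N_\tau| \le 1 + 2\mathrm{e}^{-\beta|x_0|+\frac{\beta^2}{2}\tau}$ from \eqref{ineq_exp}. The contributions of the constant $1$ cancel, since $\mathrm{e}^{-\beta^2 t} - 1 + \beta^2\int_0^t \mathrm{e}^{-\beta^2\tau}\,\mathrm{d}\tau = \mathrm{e}^{-\beta^2 t} - 1 + (1-\mathrm{e}^{-\beta^2 t}) = 0$, while the terms carrying $2\mathrm{e}^{-\beta|x_0|+\frac{\beta^2}{2}\tau}$ contribute $2\mathrm{e}^{-\beta|x_0|}\mathrm{e}^{-\frac{\beta^2}{2}t} + 2\beta^2\mathrm{e}^{-\beta|x_0|}\int_0^t \mathrm{e}^{-\frac{\beta^2}{2}\tau}\,\mathrm{d}\tau = 2\mathrm{e}^{-\beta|x_0|}\mathrm{e}^{-\frac{\beta^2}{2}t} + 4\mathrm{e}^{-\beta|x_0|}\big(1-\mathrm{e}^{-\frac{\beta^2}{2}t}\big)$. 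Collecting terms gives $\mathrm{e}^{-\beta|x_0|}\big(4 - 2\mathrm{e}^{-\frac{\beta^2}{2}t}\big) \le 4\mathrm{e}^{-\beta|x_0|}$, which is exactly \eqref{eq_expectation2}.

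I do not anticipate a serious obstacle: the only points deserving care are the justification that the integration-by-parts identity holds pathwise with no jump correction (continuity of Brownian local time) and the interchange of expectation with the time integral, both of which are routine once one has the finiteness of $E^{x_0}|N_\tau|$ from \eqref{exp_population}. If one preferred to avoid integration by parts, one could instead write $\mathrm{d}\big(\mathrm{e}^{\beta\tilde{L}_\tau}\big) = \beta\,\mathrm{e}^{\beta\tilde{L}_\tau}\,\mathrm{d}\tilde{L}_\tau$ and exploit that $\mathrm{d}\tilde{L}_\tau$ charges only $\{\tau : \xi_\tau = 0\}$ together with a many-to-one computation, but this does not shorten the estimate.
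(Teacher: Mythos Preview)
Your proposal is correct and follows essentially the same argument as the paper: integration by parts on $\int_0^t \mathrm{e}^{-\beta^2\tau}\,\mathrm{d}(\mathrm{e}^{\beta\tilde L_\tau})$, then Fubini/Tonelli, then the bound \eqref{ineq_exp}, arriving at the same quantity $4\mathrm{e}^{-\beta|x_0|}-2\mathrm{e}^{-\beta|x_0|-\frac{\beta^2}{2}t}$. Your write-up is in fact slightly more careful than the paper's in justifying the pathwise integration by parts and the interchange of expectation and integral.
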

\begin{proof}[Proof of Proposition \ref{expectation2}]
Using integration by parts, Fubini's Theorem and inequality \eqref{ineq_exp} we get 
\begin{align*}
\tilde{E}^{x_0} \Big[ \int_0^t \mathrm{e}^{- \beta^2 \tau} \mathrm{d} \big( \mathrm{e}^{\beta \tilde{L}_\tau} \big) \Big] = &\tilde{E}^{x_0} \Big[ \big[ \mathrm{e}^{- \beta^2 \tau} \mathrm{e}^{\beta \tilde{L}_\tau} \big]_0^t + \int_0^t \mathrm{e}^{\beta \tilde{L}_\tau} \ \beta^2 \mathrm{e}^{- \beta^2 \tau} \mathrm{d}\tau \Big]\\
= &\mathrm{e}^{- \beta^2 t} \tilde{E}^{x_0} \big[ \mathrm{e}^{\beta \tilde{L}_t} \big] - 1 + \int_0^t \tilde{E}^{x_0} \big[ \mathrm{e}^{\beta \tilde{L}_\tau} \big] \beta^2 \mathrm{e}^{- \beta^2 \tau} \mathrm{d}\tau\\
\leq &\mathrm{e}^{- \beta^2 t} \big( 1 + 2 \mathrm{e}^{- \beta |x_0| + \frac{\beta^2}{2}t} \big) - 1 + \int_0^t \big( 1 + 2 \mathrm{e}^{- \beta |x_0| + \frac{\beta^2}{2}\tau} \big) \beta^2 \mathrm{e}^{- \beta^2 \tau} \mathrm{d}\tau\\
= &4\mathrm{e}^{- \beta |x_0|} - 2\mathrm{e}^{- \beta |x_0| - \frac{\beta^2}{2}t},
\end{align*}
which proves the result.
\end{proof}
As a corollary to Lemma \ref{many_to_two} and Proposition \ref{expectation2} we get the following useful inequlities.
\begin{Proposition}
\label{second_moment_prop}
For any $x_0 \in \mathbb{R}$, $t \geq 0$ and $x \geq 0$
\begin{equation}
\label{second_moment_ineq}
E^{x_0} \big[ |N_t^x|^2 \big] \leq E^{x_0} |N_t^x| + 8 \mathrm{e}^{- \beta |x_0| - 2 \beta x + \beta^2 t}.
\end{equation}
\end{Proposition}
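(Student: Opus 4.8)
The plan is to apply the Many-to-two lemma (Lemma \ref{many_to_two}) with the specific choice $f = g = \mathbf{1}_{[x, \infty)}$, so that $\sum_{u \in N_t} f(X^u_t) = |N_t^x|$ and the left-hand side of \eqref{eq_many_to_two} becomes exactly $E^{x_0}\big[|N_t^x|^2\big]$. With this choice, $S^{x_0}_{fg}(t) = S^{x_0}_f(t) = E^{x_0}|N_t^x|$, which accounts for the first term on the right-hand side of \eqref{second_moment_ineq}. It remains to bound the second term, namely $2\tilde{E}^{x_0}\big[\int_0^t S^0_f(t-\tau)^2\, \diffd(\mathrm{e}^{\beta \tilde{L}_\tau})\big]$, by $8\mathrm{e}^{-\beta|x_0| - 2\beta x + \beta^2 t}$.

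First I would control $S^0_f(t - \tau) = E|N_{t-\tau}^x|$ using the $x_0 = 0$ bound \eqref{eq_expectation0}, which gives $S^0_f(t-\tau) \leq \mathrm{e}^{-\beta x + \frac{\beta^2}{2}(t-\tau)}$. Squaring, $S^0_f(t-\tau)^2 \leq \mathrm{e}^{-2\beta x + \beta^2(t-\tau)} = \mathrm{e}^{-2\beta x + \beta^2 t}\,\mathrm{e}^{-\beta^2 \tau}$. Pulling the deterministic factor $\mathrm{e}^{-2\beta x + \beta^2 t}$ out of the integral and the expectation, the second term is bounded by
\[
2\,\mathrm{e}^{-2\beta x + \beta^2 t}\; \tilde{E}^{x_0}\Big[\int_0^t \mathrm{e}^{-\beta^2 \tau}\, \diffd\big(\mathrm{e}^{\beta \tilde{L}_\tau}\big)\Big].
\]
Now I would invoke Proposition \ref{expectation2}, which tells us precisely that this remaining expectation is at most $4\mathrm{e}^{-\beta|x_0|}$. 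Combining the two pieces yields the bound $E|N_t^x| + 8\mathrm{e}^{-\beta|x_0| - 2\beta x + \beta^2 t}$, as claimed.

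There is essentially no serious obstacle here: the statement is a clean corollary, and the only point requiring mild care is the monotonicity/positivity needed to pass the pointwise bound $S^0_f(t-\tau)^2 \leq \mathrm{e}^{-2\beta x+\beta^2 t}\mathrm{e}^{-\beta^2\tau}$ inside the stochastic integral $\diffd(\mathrm{e}^{\beta \tilde{L}_\tau})$ — this is legitimate because $\tau \mapsto \mathrm{e}^{\beta \tilde{L}_\tau}$ is non-decreasing, so the integral is a genuine (pathwise Lebesgue–Stieltjes) integral against a positive measure and dominating the integrand dominates the integral. One should also note that $f = \mathbf{1}_{[x,\infty)}$ is a bounded Borel function, so Lemma \ref{many_to_two} applies directly. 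No further estimates are needed.
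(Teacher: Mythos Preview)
Your proof is correct and follows exactly the same approach as the paper: apply Lemma~\ref{many_to_two} with $f=g=\mathbf{1}_{[x,\infty)}$, bound $E|N_{t-\tau}^x|$ by \eqref{eq_expectation0}, pull out the constant $\mathrm{e}^{-2\beta x+\beta^2 t}$, and finish with Proposition~\ref{expectation2}. Your extra remark that the integrator $\tau\mapsto \mathrm{e}^{\beta\tilde L_\tau}$ is non-decreasing (so the pointwise bound on the integrand passes through) is a nice justification that the paper leaves implicit.
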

\begin{proof}
Taking $f(\cdot) = g(\cdot) = \mathbf{1}_{[x, \infty)}(\cdot)$ in identity \eqref{eq_many_to_two} and then applying inequalitites \eqref{eq_expectation0} and \eqref{eq_expectation2} we get
\begin{align*}
E^{x_0} \big[ |N_t^x|^2 \big] = &E^{x_0} |N_t^x| + 2 \tilde{E}^{x_0} \Big[ \int_0^t \big( E |N_{t - \tau}^x|\big)^2
\mathrm{d} \big( \mathrm{e}^{\beta \tilde{L}_\tau} \big) \Big]\\
\leq &E^{x_0} |N_t^x| + 2 \tilde{E}^{x_0} \Big[ \int_0^t \big( \mathrm{e}^{- \beta x + \frac{\beta^2}{2}(t - \tau)} \big)^2
\mathrm{d} \big( \mathrm{e}^{\beta \tilde{L}_\tau} \big) \Big]\\
\leq &E^{x_0} |N_t^x| + 2 \mathrm{e}^{- 2\big(\beta x + \frac{\beta^2}{2}t\big)}  \tilde{E}^{x_0} \Big[ \int_0^t \mathrm{e}^{ - \beta^2 \tau} \mathrm{d} \big( \mathrm{e}^{\beta \tilde{L}_\tau} \big) \Big]\\
\leq &E^{x_0} |N_t^x| + 8 \mathrm{e}^{- \beta |x_0| - 2  \beta x + \beta^2 t}.
\end{align*}
\end{proof}
\begin{Proposition}
\label{second_moment_prop2}
For any $x_0 \in \mathbb{R}$, $t \geq 0$ and $x \geq 0$
\begin{equation}
\label{second_moment_ineq2}
E^{x_0} \big[ |N_t^x| |N_t| \big] \leq E^{x_0} |N_t^x| + 16 \mathrm{e}^{- \beta |x_0| - \beta x + \beta^2 t }.
\end{equation}
\end{Proposition}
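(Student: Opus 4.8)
The plan is to mimic the proof of Proposition \ref{second_moment_prop}, replacing the second indicator by the constant function. Apply the many-to-two lemma (Lemma \ref{many_to_two}) with $f(\cdot) = \mathbf{1}_{[x, \infty)}(\cdot)$ and $g(\cdot) \equiv 1$, so that $\sum_{u \in N_t} f(X^u_t) = |N_t^x|$ and $\sum_{u \in N_t} g(X^u_t) = |N_t|$. Since $fg = f$, the first term on the right-hand side of \eqref{eq_many_to_two} is $S^{x_0}_{fg}(t) = E^{x_0}|N_t^x|$.

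For the integral term, I would bound the two semigroup factors separately: by \eqref{eq_expectation0}, $S^0_f(t - \tau) = E|N_{t - \tau}^x| \leq \mathrm{e}^{- \beta x + \frac{\beta^2}{2}(t - \tau)}$, and by \eqref{ineq_exp1}, $S^0_g(t - \tau) = E|N_{t - \tau}| \leq 2\mathrm{e}^{\frac{\beta^2}{2}(t - \tau)}$. Multiplying, $S^0_f(t - \tau) S^0_g(t - \tau) \leq 2 \mathrm{e}^{- \beta x + \beta^2 t} \mathrm{e}^{- \beta^2 \tau}$. Pulling the deterministic prefactor $2\mathrm{e}^{-\beta x + \beta^2 t}$ out of the expectation leaves exactly the quantity controlled by Proposition \ref{expectation2}, namely $\tilde{E}^{x_0}\big[\int_0^t \mathrm{e}^{-\beta^2 \tau}\,\mathrm{d}(\mathrm{e}^{\beta \tilde{L}_\tau})\big] \leq 4\mathrm{e}^{-\beta|x_0|}$. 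Hence the integral term is at most $2 \cdot 2\mathrm{e}^{-\beta x + \beta^2 t} \cdot 4\mathrm{e}^{-\beta|x_0|} = 16\mathrm{e}^{-\beta|x_0| - \beta x + \beta^2 t}$, and adding $E^{x_0}|N_t^x|$ gives the claim.

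There is essentially no obstacle here: the argument is a mechanical variant of the one just given for Proposition \ref{second_moment_prop}. The only points requiring a little care are to recognise that the relevant semigroup in Lemma \ref{many_to_two} is the one started from the origin (so one uses $E = E^0$, not $E^{x_0}$, in the bounds \eqref{eq_expectation0} and \eqref{ineq_exp1}), and to keep track of the numerical constant, which works out to exactly $16$ because of the extra factor of $2$ coming from \eqref{ineq_exp1} relative to the estimate used for $|N_t^x|^2$.
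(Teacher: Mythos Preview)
Your proposal is correct and matches the paper's own proof essentially line for line: the paper also applies Lemma \ref{many_to_two} with $f=\mathbf{1}_{[x,\infty)}$ and $g\equiv 1$, bounds the factors using \eqref{eq_expectation0} and \eqref{ineq_exp1}, and then invokes Proposition \ref{expectation2} to get the constant $16$. The care points you flag (semigroup started at the origin, tracking the factor of $2$ from \eqref{ineq_exp1}) are exactly the ones that matter.
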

\begin{proof}
Taking $f(\cdot) = \mathbf{1}_{[x, \infty)}(\cdot)$ and $ g(\cdot) = 1$ in identity \eqref{eq_many_to_two} and then applying inequalitites \eqref{eq_expectation0}, \eqref{ineq_exp1} and \eqref{eq_expectation2} we get
\begin{align*}
E^{x_0} \big[ |N_t^x| |N_t| \big] = &E^{x_0} |N_t^x| + 2 \tilde{E}^{x_0} \Big[ \int_0^t E |N_{t - \tau}^x| E|N_{t -\tau}|
\mathrm{d} \big( \mathrm{e}^{\beta \tilde{L}_\tau} \big) \Big]\\
\leq &E^{x_0} |N_t^x| + 2 \tilde{E}^{x_0} \Big[ \int_0^t 2 \mathrm{e}^{- \beta x + \beta^2 (t - \tau)}
\mathrm{d} \big( \mathrm{e}^{\beta \tilde{L}_\tau} \big) \Big]\\
\leq &E^{x_0} |N_t^x| + 16 \mathrm{e}^{- \beta |x_0| -  \beta x + \beta^2 t}.
\end{align*}
\end{proof}
The final result of this section is an upper bound on $E^{x_0} \big[ |N_s^x| |N_t^y| \big]$, which, unfortunately, cannot take a simple form. 
\begin{Proposition}
\label{second_moment_prop3}
For any $x_0 \in \mathbb{R}$, $0 < s < t$ and $0 \leq x < y$
\begin{equation}
\label{second_moment_ineq3}
E^{x_0} \big[ |N_s^x| |N_t^y| \big] \leq 24 \mathrm{e}^{- \beta |x_0| - \beta x - \beta y + \frac{\beta^2}{2}s + \frac{\beta^2}{2}t} + \sum_{i = 1}^4 e_i(x_0, s, t, x, y),
\end{equation}
where
\begin{align*}
e_1(x_0, s, t, x, y) = &\bigg( \mathrm{e}^{- \beta y + \frac{\beta^2}{2}(t-s)} + \Phi \Big( - \frac{y-x}{\sqrt{t-s}} \Big) \bigg) E^{x_0} |N_s^x|,\\
e_2(x_0, s, t, x, y) = &16 \mathrm{e}^{- \beta |x_0| - \beta x + \beta^2 s} \Phi \Big( - \frac{y}{\sqrt{t-s}} \Big),\\
e_3(x_0, s, t, x, y) = &\mathrm{e}^{- \beta |x_0| - \beta y + \frac{\beta^2}{2}t} \Phi \Big( \frac{y-x}{\sqrt{t-s}} - \beta \sqrt{t-s}\Big),\\
e_4(x_0, s, t, x, y) = &\Phi \Big( - \frac{x-x_0}{\sqrt{s}} \Big). 
\end{align*}
\end{Proposition}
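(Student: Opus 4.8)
The plan is to classify the ordered pairs $(v,u)$ with $v\in N_s^x$ and $u\in N_t^y$ according to the position in the family tree of their most recent common ancestor. In this model a particle can reproduce only while it sits at the origin, so exactly one of two things happens: either $v$ is the time-$s$ ancestor of $u$, or the lines of descent of $v$ and $u$ have already separated by time $s$, which forces their common ancestor to have split at some time $\tau\in[0,s)$ at the spatial point $0$. Running the spine/branching-property argument that underlies Lemma \ref{many_to_two}, the total contribution of the pairs of the second kind equals
\[
2\tilde{E}^{x_0}\Big[\int_0^s E^0|N_{s-\tau}^x|\cdot E^0|N_{t-\tau}^y|\,\mathrm{d}\big(\mathrm{e}^{\beta\tilde{L}_\tau}\big)\Big],
\]
while the total contribution of the pairs of the first kind equals, by the branching property and Lemma \ref{many_to_one},
\[
D:=\tilde{E}^{x_0}\Big[\mathrm{e}^{\beta\tilde{L}_s}\mathbf{1}_{\{\xi_s\geq x\}}\,E^{\xi_s}|N_{t-s}^y|\Big].
\]
One arrives at the same splitting by conditioning on $\mathcal{F}_s$, writing $E^{x_0}[\,|N_t^y|\mid\mathcal{F}_s\,]=\sum_{v\in N_s}E^{X^v_s}|N^y_{t-s}|$, and applying Lemma \ref{many_to_two} at the single time $s$ to $f=\mathbf{1}_{[x,\infty)}$ and $g(z)=E^z|N^y_{t-s}|$, using the tower identity $S^0_g(r)=E^0|N^y_{r+t-s}|$.

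The separated pairs are the easy case. By \eqref{eq_expectation0} one has $E^0|N^x_{s-\tau}|\leq \mathrm{e}^{-\beta x+\frac{\beta^2}{2}(s-\tau)}$ and $E^0|N^y_{t-\tau}|\leq \mathrm{e}^{-\beta y+\frac{\beta^2}{2}(t-\tau)}$; pulling the $\tau$-independent factors out of the integral and invoking Proposition \ref{expectation2} bounds this piece by $8\,\mathrm{e}^{-\beta|x_0|-\beta x-\beta y+\frac{\beta^2}{2}s+\frac{\beta^2}{2}t}$, which already accounts for a third of the leading term of \eqref{second_moment_ineq3}.

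Everything of substance is thus contained in the estimation of $D$. On $\{\xi_s\geq x\}$ the spine is at a non-negative position, so one may substitute the exact expression \eqref{eq_expectation}, which represents $E^{\xi_s}|N^y_{t-s}|$ as the sum of $A(\xi_s):=\mathrm{e}^{-\beta\xi_s-\beta y+\frac{\beta^2}{2}(t-s)}\Phi\big(\tfrac{\beta(t-s)-\xi_s-y}{\sqrt{t-s}}\big)$ and $B(\xi_s):=\Phi\big(\tfrac{y+\xi_s}{\sqrt{t-s}}\big)-\Phi\big(\tfrac{y-\xi_s}{\sqrt{t-s}}\big)$. The function $A$ is decreasing, so on $\{\xi_s\geq x\}$ it is at most $A(x)\leq\mathrm{e}^{-\beta y+\frac{\beta^2}{2}(t-s)}$ and its contribution is at most $\mathrm{e}^{-\beta y+\frac{\beta^2}{2}(t-s)}\,E^{x_0}|N^x_s|$, the first summand of $e_1$; retaining instead the Gaussian-tail factor in $A(x)$, expanding $E^{x_0}|N_s^x|$ via \eqref{eq_expectation00} and using $\tilde{E}^{x_0}[\mathrm{e}^{\beta\tilde{L}_s}\mathrm{e}^{-\beta|\xi_s|}]=E^{x_0}[\sum_{v\in N_s}\mathrm{e}^{-\beta|X^v_s|}]=\mathrm{e}^{\frac{\beta^2}{2}s-\beta|x_0|}$ (the additive martingale of \eqref{add_mart}), one recovers $e_3$ and a further piece of the leading exponential term. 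The term $B$ is the delicate one, because it is \emph{increasing} in $\xi_s$ and so cannot be replaced by its value at $x$. The remedy is to write $B(\xi_s)=\Phi\big(-\tfrac{y-x}{\sqrt{t-s}}\big)+\big(B(\xi_s)-\Phi\big(-\tfrac{y-x}{\sqrt{t-s}}\big)\big)$: since $B(x)\leq\Phi(-\tfrac{y-x}{\sqrt{t-s}})$ the correction vanishes while $\xi_s$ stays small, and where it is positive it is bounded by $1-\Phi(\tfrac{y-\xi_s}{\sqrt{t-s}})$, i.e. by the expected number of ``free Brownian children'' above $y$. Reading Lemma \ref{many_to_one} backwards turns $\tilde{E}^{x_0}[\mathrm{e}^{\beta\tilde{L}_s}\mathbf{1}_{\{\xi_s\geq x\}}(\,\cdot\,)]$ into $E^{x_0}[\sum_{v\in N_s}\mathbf{1}_{\{X^v_s\geq x\}}(\,\cdot\,)]$; bounding the resulting expectation by exponential-moment estimates of the same kind used in the proof of Proposition \ref{second_moment_prop2} (Lemma \ref{many_to_one} together with \eqref{ineq_exp1}) yields the remaining summand $\Phi(-\tfrac{y-x}{\sqrt{t-s}})E^{x_0}|N^x_s|$ of $e_1$, the term $e_2$, the remaining $16\,\mathrm{e}^{-\beta|x_0|-\beta x-\beta y+\frac{\beta^2}{2}s+\frac{\beta^2}{2}t}$, and, after one last use of \eqref{eq_expectation00}, the term $e_4$.

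The step I expect to be the obstacle is precisely this bounding of $D$: the Gaussian-CDF arguments occurring in $E^{\xi_s}|N^y_{t-s}|$ depend on $\xi_s$ in a way that is not monotone in the useful direction, forcing one to split the range of the spine and, on the part where it sits high, to pay for the growth of $B$ with exponential moments of the branching population above a high level. It is exactly this that prevents the error term from collapsing to a single clean expression and leaves the leading constant at the non-optimal value $24$.
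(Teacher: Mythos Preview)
Your high-level decomposition is correct and is in fact cleaner than what the paper does. Writing $E^{x_0}[|N_s^x||N_t^y|]=D+S$ via Lemma~\ref{many_to_two} applied at time $s$ with $f=\mathbf{1}_{[x,\infty)}$ and $g(z)=E^z|N^y_{t-s}|$ (using the tower identity $S^0_g(r)=E^0|N^y_{r+t-s}|$) gives exactly
\[
D=\tilde{E}^{x_0}\big[\mathrm{e}^{\beta\tilde{L}_s}\mathbf{1}_{\{\xi_s\ge x\}}E^{\xi_s}|N^y_{t-s}|\big],\qquad
S=2\tilde{E}^{x_0}\Big[\int_0^s E^0|N^x_{s-\tau}|\,E^0|N^y_{t-\tau}|\,\mathrm{d}(\mathrm{e}^{\beta\tilde{L}_\tau})\Big],
\]
and $S\le 8\,\mathrm{e}^{-\beta|x_0|-\beta x-\beta y+\frac{\beta^2}{2}s+\frac{\beta^2}{2}t}$ as you say. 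The paper proceeds differently: it conditions on $\mathcal{F}_s$, immediately bounds $E^{X^u_s}|N^y_{t-s}|$ via \eqref{eq_expectation00}, and is then left to control products $|N^x_s|\cdot|N_s|$ and $|N^x_s|\cdot|N_s^{y+z\sqrt{t-s}}|$ of two sums at time $s$, each of which requires a separate pass through Lemma~\ref{many_to_two}. That is where the constant $24$ and the term $e_2$ enter.

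Your bookkeeping of which pieces of $D$ produce which $e_i$, however, is scrambled. Bounding $A(\xi_s)\le\mathrm{e}^{-\beta y+\frac{\beta^2}{2}(t-s)}$ makes the $A$-contribution exactly the first summand of $e_1$ and nothing else; $e_3$ does \emph{not} come from $A$, and the martingale identity for $\tilde{E}^{x_0}[\mathrm{e}^{\beta\tilde{L}_s-\beta|\xi_s|}]$ is not needed. For $B$, the clean route is $B(\xi_s)\le\Phi\big(\tfrac{\xi_s-y}{\sqrt{t-s}}\big)$ and Fubini:
\[
\tilde{E}^{x_0}\Big[\mathrm{e}^{\beta\tilde{L}_s}\mathbf{1}_{\{\xi_s\ge x\}}\Phi\Big(\tfrac{\xi_s-y}{\sqrt{t-s}}\Big)\Big]
=\int_{\mathbb{R}}E^{x_0}\big|N_s^{\max(x,\,y+z\sqrt{t-s})}\big|\,\tfrac{1}{\sqrt{2\pi}}\mathrm{e}^{-z^2/2}\,\mathrm{d}z.
\]
The range $z\le-\tfrac{y-x}{\sqrt{t-s}}$ gives the second summand of $e_1$; on $z>-\tfrac{y-x}{\sqrt{t-s}}$ the bound \eqref{eq_expectation00} yields $e_3$ (from the exponential part, after completing the square) and $e_4$ (from the Gaussian-tail part). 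Neither $e_2$ nor any further piece of the leading exponential appears. Your route therefore actually proves the sharper inequality
\[
E^{x_0}\big[|N_s^x||N_t^y|\big]\le 8\,\mathrm{e}^{-\beta|x_0|-\beta x-\beta y+\frac{\beta^2}{2}s+\frac{\beta^2}{2}t}+e_1+e_3+e_4,
\]
which of course implies \eqref{second_moment_ineq3}; the constant $24$ and the term $e_2$ are artifacts of the paper's less economical decomposition.
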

In our applications of this result (see inequalities \eqref{condition2} and \eqref{upper_bound_prop_eq2} below) $x_0$, $s$, $t$, $x$ and $y$ will be chosen in such a way that the first term on the right hand side of \eqref{second_moment_ineq3}  will be $\approx 24 \mathrm{e}^{\beta |x_0|} E^{x_0} |N_s^x| \ E^{x_0} |N_t^y|$ while all of the $e_i$ terms will give a negligible contribution.
\begin{proof}
Using the Markov property and inequality \eqref{eq_expectation00} we obtain 
\begin{align}
\label{bound}
E^{x_0} \big[ |N_s^x| |N_t^y| \big] = &E^{x_0} \Big[ |N_s^x| \ E \big( |N_t^y| \ \big\vert \mathcal{F}_s\big) \Big]\nonumber\\
= &E^{x_0} \Big[ |N_s^x| \sum_{u \in N_s} E^{X^u_s} |N_{t - s}^y| \Big]\nonumber\\
\leq &E^{x_0} \Big[ |N_s^x| \sum_{u \in N_s} \Big( \mathrm{e}^{- \beta |X^u_s| - \beta y + \frac{\beta^2}{2}(t - s)} + \Phi \Big( - \frac{y - X^u_s}{\sqrt{t - s}} \Big) \Big) \Big]\nonumber\\
\leq &\mathrm{e}^{- \beta y + \frac{\beta^2}{2}(t - s)} E^{x_0} \big[ |N_s^x| |N_s| \big] + E^{x_0} \Big[ |N_s^x| \sum_{u \in N_s} \Phi \Big( - \frac{y - X^u_s}{\sqrt{t - s}} \Big) \Big].
\end{align}
\end{proof}
Using Proposition \ref{second_moment_prop2} we bound the first term in \eqref{bound} as
\[
(I) := \mathrm{e}^{- \beta y + \frac{\beta^2}{2}(t - s)} E^{x_0} \big[ |N_s^x| |N_s| \big] \leq (IA) + (IB),
\]
where
\begin{equation}
\label{bound1a}
(IA) = \mathrm{e}^{- \beta y + \frac{\beta^2}{2}(t - s)} E^{x_0} |N_s^x| 
\end{equation}
and
\begin{equation}
\label{bound1b}
(IB) = 16 \mathrm{e}^{- \beta |x_0| - \beta x - \beta y + \frac{\beta^2}{2}s + \frac{\beta^2}{2}t}.
\end{equation}
To treat the second term in \eqref{bound} we first rewrite it using Fubini's Theorem as 
\begin{align*}
E^{x_0} \Big[ |N_s^x| \sum_{u \in N_s} \Phi \Big( - \frac{y - X^u_s}{\sqrt{t - s}} \Big) \Big] = 
&E^{x_0} \Big[ |N_s^x| \sum_{u \in N_s} \Big( \int_{\mathbb{R}} \mathbf{1}_{\big\{z \leq - \frac{y - X^u_s}{\sqrt{t-s}} \big\}} \frac{1}{\sqrt{2 \pi}} \mathrm{e}^{- \frac{z^2}{2}} \mathrm{d}z \Big) \Big]\\
= &\int_{\mathbb{R}} \Big( E^{x_0} \big[ |N_s^x| |N_s^{y + z \sqrt{t-s}}| \big] \Big) \frac{1}{\sqrt{2 \pi}} \mathrm{e}^{- \frac{z^2}{2}} \mathrm{d}z. 
\end{align*}
To estimate this expression we split the integration region into $\{z : y + z \sqrt{t-s} \leq 0\}$ and $\{z : y + z \sqrt{t-s} > 0\}$ so that the second term in \eqref{bound} may be decomposed as 
\[
E^{x_0} \Big[ |N_s^x| \sum_{u \in N_s} \Phi \Big( - \frac{y - X^u_s}{\sqrt{t - s}} \Big) \Big] = (II) + (III),
\]
where $(II)$ and $(III)$ are treated separately below. Firstly, 
\begin{align*}
(II) = &\int_{- \infty}^{- \frac{y}{\sqrt{t-s}}} \Big( E^{x_0} \big[ |N_s^x| |N_s^{y + z \sqrt{t-s}}| \big] \Big) \frac{1}{\sqrt{2 \pi}} \mathrm{e}^{- \frac{z^2}{2}} \mathrm{d}z\\
\leq &\int_{- \infty}^{- \frac{y}{\sqrt{t-s}}} \Big( E^{x_0} \big[ |N_s^x| |N_s| \big] \Big) \frac{1}{\sqrt{2 \pi}} \mathrm{e}^{- \frac{z^2}{2}} \mathrm{d}z\\
= &\Phi \Big( - \frac{y}{\sqrt{t-s}} \Big) E^{x_0} \big[ |N_s^x| |N_s| \big]\\
\leq &(IIA) + (IIB),
\end{align*}
where $(IIA)$ and $(IIB)$ are derived from an application of Proposition \ref{second_moment_prop2} and are
\begin{equation}
\label{bound2a}
(IIA) = \Phi \Big( - \frac{y}{\sqrt{t-s}} \Big) E^{x_0} |N_s^x| 
\end{equation}
and
\begin{equation}
\label{bound2b}
(IIB) = 16 \mathrm{e}^{- \beta |x_0| - \beta y + \beta^2 s} \Phi \Big( - \frac{y}{\sqrt{t-s}} \Big). 
\end{equation}
Also,
\begin{align}
\label{bound3}
(III) = &\int_{- \frac{y}{\sqrt{t-s}}}^\infty \Big( E^{x_0} \big[ |N_s^x| |N_s^{y + z \sqrt{t-s}}| \big] \Big) \frac{1}{\sqrt{2 \pi}} \mathrm{e}^{- \frac{z^2}{2}} \mathrm{d}z \nonumber\\
= &\int_{- \frac{y}{\sqrt{t-s}}}^{- \frac{y - x}{\sqrt{t-s}}} \Big( E^{x_0} |N_s^x| \Big) \frac{1}{\sqrt{2 \pi}} \mathrm{e}^{- \frac{z^2}{2}} \mathrm{d}z +  \int_{- \frac{y-x}{\sqrt{t-s}}}^{\infty} \Big( E^{x_0} |N_s^{y + z \sqrt{t-s}}| \Big) \frac{1}{\sqrt{2 \pi}} \mathrm{e}^{- \frac{z^2}{2}} \mathrm{d}z \nonumber\\
& + \int_{- \frac{y}{\sqrt{t-s}}}^\infty \bigg( 2 \tilde{E}^{x_0} \Big[ \int_0^s E |N_{s - \tau}^x| E |N_{s - \tau}^{y + z \sqrt{t-s}}| \ \mathrm{d} \big( \mathrm{e}^{\beta \tilde{L}_\tau} \big) \Big] \bigg)
\frac{1}{\sqrt{2 \pi}} \mathrm{e}^{- \frac{z^2}{2}} \mathrm{d}z
\end{align}
using Lemma \ref{many_to_two} with $f(\cdot) = \mathbf{1}_{[x, \infty)}(\cdot)$, $g(\cdot) = \mathbf{1}_{[y + z\sqrt{t-s}, \infty)}(\cdot)$ and noting that $f(\cdot)g(\cdot) = f(\cdot)$ if $z \in \big[ - \frac{y}{\sqrt{t-s}}, \ - \frac{y - x}{\sqrt{t-s}}\big]$ and $f(\cdot)g(\cdot) = g(\cdot)$ if $z \in \big[ - \frac{y - x}{\sqrt{t-s}}, \ \infty\big]$. 

Then the first term of \eqref{bound3} is
\begin{equation}
\label{bound3a}
\int_{- \frac{y}{\sqrt{t-s}}}^{- \frac{y - x}{\sqrt{t-s}}} \Big( E^{x_0} |N_s^x| \Big) \frac{1}{\sqrt{2 \pi}} 
\mathrm{e}^{- \frac{z^2}{2}} \mathrm{d}z = \bigg( \Phi \Big( - \frac{y - x}{\sqrt{t-s}} \Big) - \Phi \Big( - \frac{y}{\sqrt{t-s}} \Big) \bigg)E^{x_0} |N_s^x| =: (IIIA). 
\end{equation}
Using inequality \eqref{eq_expectation00} we may bound the second term of \eqref{bound3} as
\begin{align}
\label{bound3bc}
&\int_{- \frac{y-x}{\sqrt{t-s}}}^{\infty} \Big( E^{x_0} |N_s^{y + z \sqrt{t-s}}| \Big) \frac{1}{\sqrt{2 \pi}} \mathrm{e}^{- \frac{z^2}{2}} \mathrm{d}z \nonumber\\ 
\leq &\int_{- \frac{y-x}{\sqrt{t-s}}}^{\infty} \bigg( \mathrm{e}^{- \beta |x_0| - \beta (y + z \sqrt{t-s}) + \frac{\beta^2}{2}s} + \Phi \Big( - \frac{y + z \sqrt{t-s} - x_0}{\sqrt{s}} \Big) \bigg)
\frac{1}{\sqrt{2 \pi}} \mathrm{e}^{- \frac{z^2}{2}} \mathrm{d}z \nonumber\\
\leq &\mathrm{e}^{- \beta |x_0| - \beta y + \frac{\beta^2}{2}t} \int_{- \frac{y-x}{\sqrt{t-s}}}^{\infty} \frac{1}{\sqrt{2 \pi}} \mathrm{e}^{- \frac{1}{2}(z + \beta \sqrt{t-s})^2} \mathrm{d}z + \int_{- \frac{y-x}{\sqrt{t-s}}}^{\infty} \Phi \Big( - \frac{x - x_0}{\sqrt{s}} \Big) \frac{1}{\sqrt{2 \pi}} \mathrm{e}^{- \frac{z^2}{2}} \mathrm{d}z \nonumber\\
= &\mathrm{e}^{- \beta |x_0| - \beta y + \frac{\beta^2}{2}t} \Phi \Big( \frac{y-x}{\sqrt{t-s}} - \beta \sqrt{t-s} \Big) + \Phi \Big( - \frac{x - x_0}{\sqrt{s}} \Big) =: (IIIB) + (IIIC).
\end{align}
Using inequalities \eqref{eq_expectation0} and \eqref{eq_expectation2} we may bound the third term of \eqref{bound3} as
\begin{align*}
&\int_{- \frac{y}{\sqrt{t-s}}}^\infty \bigg( 2 \tilde{E}^{x_0} \Big[ \int_0^s E |N_{s - \tau}^x| E |N_{s - \tau}^{y + z \sqrt{t-s}}| \ \mathrm{d} \big( \mathrm{e}^{\beta \tilde{L}_\tau} \big) \Big] \bigg)
\frac{1}{\sqrt{2 \pi}} \mathrm{e}^{- \frac{z^2}{2}} \mathrm{d}z\\
\leq &\int_{- \frac{y}{\sqrt{t-s}}}^\infty \bigg( 2 \tilde{E}^{x_0} 
\Big[ \int_0^s \mathrm{e}^{- \beta x + \frac{\beta^2}{2}(s - \tau)} 
\mathrm{e}^{- \beta(y + z \sqrt{t-s}) + \frac{\beta^2}{2}(s - \tau)} \mathrm{d} \big( \mathrm{e}^{\beta \tilde{L}_\tau} \big) \Big] \bigg)\frac{1}{\sqrt{2 \pi}} \mathrm{e}^{- \frac{z^2}{2}} \mathrm{d}z\\
\leq &\int_{- \frac{y}{\sqrt{t-s}}}^\infty \bigg( 8 \mathrm{e}^{- \beta |x_0| - \beta x - \beta y + \beta^2 s - \beta z \sqrt{t-s}} \bigg)\frac{1}{\sqrt{2 \pi}} 
\mathrm{e}^{- \frac{z^2}{2}} \mathrm{d}z\\
= &8 \mathrm{e}^{- \beta |x_0| - \beta x - \beta y + \frac{\beta^2}{2}s + \frac{\beta^2}{2}t} \int_{- \frac{y}{\sqrt{t-s}}}^\infty \frac{1}{\sqrt{2 \pi}} \mathrm{e}^{- \frac{1}{2}(z + \beta \sqrt{t - s})^2} \mathrm{d}z\\
\leq &8 \mathrm{e}^{- \beta |x_0| - \beta x - \beta y + \frac{\beta^2}{2}s + \frac{\beta^2}{2}t} =: (IIID).
\end{align*}
Thus we have established inequality \eqref{second_moment_ineq3} with $(IB) + (IIID) = 24 \mathrm{e}^{- \beta |x_0| - \beta x - \beta y + \frac{\beta^2}{2}s + \frac{\beta^2}{2}t}$, $(IA) + (IIA) + (IIIA) = e_1(x_0, s, t, x, y)$, $(IIB) = e_2(x_0, s, t, x, y)$, $(IIIB) = e_3(x_0, s, t, x, y)$ and $(IIIC) = e_4(x_0, s, t, x, y)$.

%====================================================================================================
\section{Proof of the main result}

Let us first note that it is sufficient to prove Theorem \ref{main} for only a single value of $x_0$, which we shall take to be $0$.

Indeed, it can be seen that for any $x$, $y \in \mathbb{R}$, a branching process initiated from level $x$ at time $0$ will hit level $y$ in some almost surely finite time $T$ and thus will contain a branching process initiated from level $y$ at time $T$ by the Strong Markov property. So, for example, 
\[
P^x \Big( \limsup_{t \to \infty} \frac{R_t - \frac{\beta}{2}t}{\log t} \leq \frac{1}{\beta} \Big) 
\leq P^y \Big( \limsup_{t \to \infty} \frac{R_t - \frac{\beta}{2}t}{\log t} \leq \frac{1}{\beta} \Big)
\]
and since $x$ and $y$ are arbitrary it follows that 
\[
P^x \Big( \limsup_{t \to \infty} \frac{R_t - \frac{\beta}{2}t}{\log t} \leq \frac{1}{\beta} \Big) 
= P^y \Big( \limsup_{t \to \infty} \frac{R_t - \frac{\beta}{2}t}{\log t} \leq \frac{1}{\beta} \Big).
\]

%----------------------------------------------------------------------------------------------------
\subsection{Upper bound for \eqref{main_limsup}}

\begin{Proposition}
\begin{equation}
\label{main_limsup_upper}
\limsup_{t \to \infty} \frac{R_t - \frac{\beta}{2}t}{\log t} \leq \frac{1}{\beta} \qquad P \text{-a.s.}
\end{equation}
\end{Proposition}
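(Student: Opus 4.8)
As noted above, we may and do take $x_0 = 0$. Fix $\eps > 0$; it suffices to show that $P$-a.s. $R_t \le \frac{\beta}{2}t + \frac{1+\eps}{\beta}\log t$ for all large $t$, since letting $\eps \downarrow 0$ along a countable sequence then gives \eqref{main_limsup_upper}. The whole argument is a first-moment (Markov) estimate plus Borel--Cantelli, the only analytic input being $P(R_s > x) = P(|N_s^x| \ge 1) \le E|N_s^x| \le \mathrm{e}^{-\beta x + \frac{\beta^2}{2}s}$, which is \eqref{eq_expectation0}. The genuine issue, as always for such statements, is to pass from control at a discrete set of times to control at \emph{all} times $t$.

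\textbf{A coarse grid.} Decompose $[0,\infty) = \bigcup_n [n,n+1]$ and, inside each $[n,n+1]$, place the grid points $t_{n,j} = n + j\,n^{-(1+\eps/2)}$ for $0 \le j \le \lceil n^{1+\eps/2}\rceil$. Set $a_n := \frac{\beta}{2}n + \frac{1+\eps}{\beta}\log n$. Since each $t_{n,j} \le n+1$, \eqref{eq_expectation0} gives $E|N_{t_{n,j}}^{\,a_n - 1}| \le \mathrm{e}^{-\beta(a_n-1) + \frac{\beta^2}{2}(n+1)} = \mathrm{e}^{\beta + \beta^2/2}\,n^{-(1+\eps)}$ for all large $n$. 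A union bound over the $O(n^{1+\eps/2})$ grid points therefore gives $P\big(\max_j R_{t_{n,j}} > a_n - 1\big) = O(n^{-\eps/2})$, which is summable, so by Borel--Cantelli $P$-a.s. $\max_j R_{t_{n,j}} \le a_n - 1$ for all large $n$.

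\textbf{Filling the gaps.} Because particles in this model are replaced but never removed, the population is non-decreasing: any particle alive at a time $t \in [t_{n,j-1},t_{n,j}]$ has a descendant $w \in N_{t_{n,j}}$ whose spatial path over $[t_{n,j-1},t_{n,j}]$ passes, at time $t$, through the position of that particle. Hence for $t \in [t_{n,j-1},t_{n,j}]$ one has $R_t \le R_{t_{n,j}} + \Delta_n$, where $\Delta_n$ is the largest oscillation, over all cells and all $w \in N_{t_{n,j}}$, of $(X^w_s)$ on that cell. Each such restricted path is a Brownian increment over a window of length $n^{-(1+\eps/2)}$, so the probability that a single one oscillates by more than $1$ is at most $2\mathrm{e}^{-n^{1+\eps/2}/8}$; applying Lemma~\ref{many_to_one} to this path functional (with a Cauchy--Schwarz step to separate the oscillation event from the local-time weight, and elementary moment bounds for Brownian local time) together with the crude bound $E|N_{n+1}| \le 2\mathrm{e}^{\beta^2(n+1)/2}$ from \eqref{ineq_exp1} gives $P(\Delta_n > 1) \le C\,n^{1+\eps/2}\,\mathrm{e}^{\beta^2 n/2 - n^{1+\eps/2}/16}$, which is summable since $1+\eps/2 > 1$. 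A second Borel--Cantelli then yields $\Delta_n \le 1$ for all large $n$, $P$-a.s.

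\textbf{Conclusion and the hard point.} Combining the two pieces, $P$-a.s. $\sup_{t\in[n,n+1]} R_t \le (a_n - 1) + 1 = a_n$ for all large $n$, so for $t \in [n,n+1]$ we get $R_t - \frac{\beta}{2}t \le a_n - \frac{\beta}{2}n = \frac{1+\eps}{\beta}\log n \le \frac{1+\eps}{\beta}\log t$, whence $\limsup_t (R_t - \frac{\beta}{2}t)/\log t \le (1+\eps)/\beta$; now let $\eps \downarrow 0$. The step I expect to be the real obstacle is this gap-filling: a grid fine enough to control the motion between grid points directly would force a union bound over $\sim \mathrm{e}^{\beta^2 n/2}$ particles and diverge. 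The way around it is to decouple the two scales — keep the grid coarse ($\sim n^{1+\eps/2}$ points) so that the first-moment bound on the front still beats the union bound, and note separately that over a window as short as $n^{-(1+\eps/2)}$ even exponentially many Brownian paths move by less than $1$ with overwhelming probability. Everything else reduces to \eqref{eq_expectation0}, \eqref{ineq_exp1}, Markov's inequality and Borel--Cantelli.
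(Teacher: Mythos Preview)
Your argument has a genuine gap: the quantity $O(n^{-\eps/2})$ you obtain for $P\big(\max_j R_{t_{n,j}} > a_n - 1\big)$ is \emph{not} summable in $n$ (the exponent $\eps/2$ is less than $1$ for small $\eps$), so Borel--Cantelli does not apply. Worse, this is not a slip that can be repaired by tuning the grid exponent. There is an intrinsic tension between your two steps: for the gap-filling to beat the particle count $\sim e^{\beta^2 n/2}$ (or $e^{\beta^2 n}$ after Cauchy--Schwarz), the oscillation tail must be of order $e^{-c K_n}$ with $K_n$ growing faster than linearly in $n$, i.e.\ you need at least of order $n$ grid points per unit interval; but the first-moment bound at a single grid point is only $O(n^{-(1+\eps)})$, so a union bound over $\gtrsim n$ points leaves you with $O(n^{-\eps})$, which still diverges. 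Allowing a larger oscillation threshold (say $\delta\log n$ instead of $1$) does not help either, since then the grid level drops and the first-moment bound weakens correspondingly.

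The paper avoids the grid entirely: it applies the many-to-one lemma directly to the path functional $\sup_{s\in[n,n+1]} X^u_s$, so that
\[
P\big(|\hat N^\eps_n|>0\big)\le \tilde E\Big[e^{\beta\tilde L_{n+1}}\,\mathbf 1_{\{\sup_{s\in[n,n+1]}\xi_s\ge a_n\}}\Big],
\]
and then splits according to whether the spine visits $0$ during $[n,n+1]$. If it does, the spine's oscillation on that unit interval exceeds $\frac{\beta}{2}n$, whose super-Gaussian tail kills the local-time weight via Cauchy--Schwarz. If it does not, then $\tilde L_{n+1}=\tilde L_n$, the increment $(\xi_s-\xi_n)_{s\in[n,n+1]}$ is independent of $(\xi_s)_{s\le n}$, and conditioning on it reduces the estimate to the fixed-time bound \eqref{eq_expectation0} at the shifted level $a_n-\xi^{\sup}_{n+1}$, giving the summable $C/n^{1+\beta\eps}$. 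The ``decoupling of scales'' you correctly identify as the hard point is achieved not through a spatial grid but through this dichotomy on whether the spine revisits the catalyst.
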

\begin{proof}
For $n \in \mathbb{N}$ and $\epsilon > 0$ let us define the set of particles
\[
\hat{N}^\epsilon_n := \Big\{ u \in N_{n+1} : \sup_{s \in [n, n+1]} X^u_s \geq \frac{\beta}{2}n + \big(\frac{1}{\beta} + \epsilon \big)\log n \Big\}.
\]
Then 
\[
\big\{ R_t > \frac{\beta}{2}t + \big( \frac{1}{\beta} + \epsilon \big) \log t \text{ for some } t \in [n, n+1] \big\} \ \subseteq \ \big\{ |\hat{N}^\epsilon_n| > 0 \big\}
\]
and \eqref{main_limsup_upper} will follow if we can show that for all $\epsilon > 0$
\[
P \big( \big\{ |\hat{N}^\epsilon_n| > 0 \big\} \text{ i.o.}\big) = 0.
\]
By Borel-Cantelli lemma it is sufficient to verify that 
\begin{equation}
\label{finite_sum}
\sum_{n = 0}^\infty P \big( |\hat{N}^\epsilon_n| > 0 \big) < \infty.
\end{equation}
Let us take an arbitrary $\epsilon > 0$. From the Markov's inequality and the Many-to-one lemma (Lemma \ref{many_to_one}) we get that for all $n \geq 0$
\begin{align*}
P \big( |\hat{N}^\epsilon_n| > 0\big) \leq E |\hat{N}^\epsilon_n| 
= &E \Big[ \sum_{u \in N_{n+1}} \mathbf{1}_{\big\{ \sup_{s \in [n, n+1]} X^u_s \geq \frac{\beta}{2}n + (\frac{1}{\beta} + \epsilon )\log n \big\}} \Big]\\
= &\tilde{E} \Big[ \mathrm{e}^{\beta \tilde{L}_{n+1}} \mathbf{1}_{\big\{ \sup_{s \in [n, n+1]} \xi_s \geq \frac{\beta}{2}n + (\frac{1}{\beta} + \epsilon )\log n \big\}} \Big].
\end{align*}
To estimate the latter expectation we split it according to the events $\{ \inf_{s \in [n, n+1]} \xi_s \leq 0 \}$ and $\{ \inf_{s \in [n , n+1]} \xi_s > 0 \}$.

If we let $\xi^{\sup}_{n+1} := \sup_{s \in [n, n+1]} (\xi_s - \xi_n)$ and $\xi^{\inf}_{n+1} := \inf_{s \in [n, n+1]} (\xi_s - \xi_n)$ then 
\begin{align*}
(I) := &\tilde{E} \Big[ \mathrm{e}^{\beta \tilde{L}_{n+1}} \mathbf{1}_{\big\{ \sup_{s \in [n, n+1]} \xi_s \geq \frac{\beta}{2}n + (\frac{1}{\beta} + \epsilon )\log n \big\}} \mathbf{1}_{\big\{ \inf_{s \in [n, n+1]} \xi_s \leq 0 \big\}} \Big]\\
= &\tilde{E} \Big[ \mathrm{e}^{\beta \tilde{L}_{n+1}} \mathbf{1}_{\big\{ \xi_n + \xi^{\sup}_{n+1} \geq \frac{\beta}{2}n + (\frac{1}{\beta} + \epsilon )\log n \big\}} \mathbf{1}_{\big\{ \xi_n + \xi^{\inf}_{n+1} \leq 0 \big\}} \Big]\\
\leq &\tilde{E} \Big[ \mathrm{e}^{\beta \tilde{L}_{n+1}} \mathbf{1}_{\big\{ \xi^{\sup}_{n+1} - \xi^{\inf}_{n+1} \geq \frac{\beta}{2}n \big\}} \Big]\\
\leq &\Big( \tilde{E} \mathrm{e}^{2\beta \tilde{L}_{n+1}} \Big)^{\frac{1}{2}} \Big( \tilde{P} \big( \xi^{\sup}_{n+1} - \xi^{\inf}_{n+1} \geq \frac{\beta}{2}n \big)\Big)^{\frac{1}{2}}
\end{align*}
using the Cauchy-Schwarz inequality in the last line. We know from \eqref{exp_population} that 
\[
\Big( \tilde{E} \mathrm{e}^{2\beta \tilde{L}_{n+1}} \Big)^{\frac{1}{2}} \sim \sqrt{2} \mathrm{e}^{ \beta^2 (n+1)} \quad \text{ as } n \to \infty
\]
while
\begin{align*}
\tilde{P} \big( \xi^{\sup}_{n+1} - \xi^{\inf}_{n+1} \geq \frac{\beta}{2}n \big) \leq 
 &\tilde{P} \big( \xi^{\sup}_{n+1} \geq \frac{\beta}{4}n \big)
+ \tilde{P} \big( - \xi^{\inf}_{n+1} \geq \frac{\beta}{4}n \big)\\
= &2 \mathbb{P} \big( |\mathcal{N}(0, 1)| \geq \frac{\beta}{4} n\big)\\
= &4 \mathbb{P} \big( \mathcal{N}(0, 1) \geq \frac{\beta}{4} n\big)\\
\sim &4 \frac{4}{\sqrt{2 \pi} \beta n} \mathrm{e}^{- \frac{\beta^2}{32} n^2} \quad \text{ as } n \to \infty,
\end{align*}
where in the last line we have used the standard estimate
\begin{equation}
\label{normal_tail}
\mathbb{P} \big( \mathcal{N}(0, 1) \geq x \big) \sim \frac{1}{\sqrt{2 \pi}} \frac{1}{x} \mathrm{e}^{- \frac{x^2}{2}}.
\end{equation}
Thus $(I)$ decays to $0$ at a faster than exponential rate. Also, since $\tilde{L}_{n+1} = \tilde{L}_n$ on the event $\{ \inf_{s \in [n , n+1]} \xi_s > 0 \}$, we have that
\begin{align*}
(II) := &\tilde{E} \Big[ \mathrm{e}^{\beta \tilde{L}_{n+1}} \mathbf{1}_{\big\{ \sup_{s \in [n, n+1]} \xi_s \geq \frac{\beta}{2}n + (\frac{1}{\beta} + \epsilon )\log n \big\}} 
\mathbf{1}_{\big\{ \inf_{s \in [n, n+1]} \xi_s > 0 \big\}} \Big]\\
= &\tilde{E} \Big[ \mathrm{e}^{\beta \tilde{L}_n} \mathbf{1}_{\big\{ \xi_n + \xi^{\sup}_{n+1} \geq \frac{\beta}{2}n + (\frac{1}{\beta} + \epsilon )\log n \big\}} \mathbf{1}_{\big\{ \xi_n + \xi^{\inf}_{n+1} > 0 \big\}} \Big]\\
\leq &\tilde{E} \Big[ \mathrm{e}^{\beta \tilde{L}_n} \mathbf{1}_{\big\{ \xi_n \geq \frac{\beta}{2}n + (\frac{1}{\beta} + \epsilon )\log n - \xi^{\sup}_{n+1} \big\}} \Big].
\end{align*} 
To estimate the latter expectation we split it according to the events $\{ \xi^{\sup}_{n+1} < \frac{\beta}{2} n \}$ and $\{ \xi^{\sup}_{n+1} \geq \frac{\beta}{2} n \}$. Then
\begin{align*}
(IIA) := &\tilde{E} \Big[ \mathrm{e}^{\beta \tilde{L}_n} \mathbf{1}_{\big\{ \xi_n \geq \frac{\beta}{2}n + (\frac{1}{\beta} + \epsilon )\log n - \xi^{\sup}_{n+1} \big\}} \mathbf{1}_{\big\{ \xi^{\sup}_{n+1} \geq \frac{\beta}{2} n \big\}} \Big]\\
\leq &\tilde{E} \Big[ \mathrm{e}^{\beta \tilde{L}_n} \mathbf{1}_{\big\{ \xi^{\sup}_{n+1} \geq \frac{\beta}{2} n \big\}} \Big]\\
\leq &\Big( \tilde{E} \mathrm{e}^{2\beta \tilde{L}_n} \Big)^{\frac{1}{2}} \Big( \tilde{P} \big( \xi^{\sup}_{n+1} \geq \frac{\beta}{2}n \big)\Big)^{\frac{1}{2}},
\end{align*} 
which decays to $0$ at a faster than exponential rate just as in the above calculation. Finally,
\begin{align*}
(IIB) := &\tilde{E} \Big[ \mathrm{e}^{\beta \tilde{L}_n} \mathbf{1}_{\big\{ \xi_n \geq \frac{\beta}{2}n + (\frac{1}{\beta} + \epsilon )\log n - \xi^{\sup}_{n+1} \big\}} \mathbf{1}_{\big\{ \xi^{\sup}_{n+1} < \frac{\beta}{2} n \big\}} \Big]\\
= &\tilde{E} \Big[ \mathbf{1}_{\big\{ \xi^{\sup}_{n+1} < \frac{\beta}{2} n \big\}} \tilde{E} \Big( \mathrm{e}^{\beta \tilde{L}_n} \mathbf{1}_{\big\{ \xi_n \geq \frac{\beta}{2}n + (\frac{1}{\beta} + \epsilon )\log n - \xi^{\sup}_{n+1} \big\}}  \Big\vert \big( \xi_s - \xi_n\big)_{s \in [n, n+1]} \Big) \Big]\\
\leq &\tilde{E} \Big[ \mathbf{1}_{\big\{ \xi^{\sup}_{n+1} < \frac{\beta}{2} n \big\}} \mathrm{e}^{-\beta \big( \frac{\beta}{2}n + (\frac{1}{\beta} + \epsilon )\log n - \xi^{\sup}_{n+1} \big) + \frac{\beta^2}{2}n} \Big]\\
\leq &\frac{C}{n^{1 + \beta \epsilon}},  
\end{align*} 
where we have used \eqref{eq_expectation0} to give an upper bound on the conditional expectation in the second line and where $C = \tilde{E} \mathrm{e}^{\beta \xi^{\sup}_{n+1}} = \mathbb{E} \mathrm{e}^{\beta |\mathcal{N}(0,1)|}$. Thus 
\[
P \big( |\hat{N}^\epsilon_n| > 0 \big) \leq (I) + (IIA) + (IIB)
\]
decays sufficiently fast for \eqref{finite_sum} to be true.
\end{proof}

%----------------------------------------------------------------------------------------------------
\subsection{Lower bound for \eqref{main_limsup}}
Before we proceed with the main result of this subsection let us establish the following simple $0$-$1$ law.
\begin{Proposition}
\label{zero_one}
For any $c > 0$ it is true that
\begin{equation}
\label{zero_one_eq}
P \Big( \limsup_{t \to \infty} \frac{R_t - \frac{\beta}{2}t}{\log t} < c \Big) \in \{0, 1\}.
\end{equation}
\end{Proposition}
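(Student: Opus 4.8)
\begin{proof}
Fix $c>0$ and set $p:=P\big(\limsup_{t\to\infty}\frac{R_t-\frac{\beta}{2}t}{\log t}<c\big)$. The plan is to show, via the branching property, that $p$ satisfies a fixed‑point type relation which forces $p\in\{0,1\}$.

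\emph{Step 1: $p=P^x\big(\limsup_{t\to\infty}\frac{R_t-\frac{\beta}{2}t}{\log t}<c\big)$ for every $x\in\mathbb R$.} This is exactly the argument used in the reduction at the beginning of Section~3: for $x,y\in\mathbb R$ a branching process started from $x$ reaches level $y$ at an a.s.\ finite (stopping) time $T$, and by the strong Markov property its descendants from that point form an independent copy of the $P^y$‑process; denoting its rightmost particle by $\rho_t$ (for $t\ge T$) one has $R_t\ge\rho_t$ for all $t\ge T$, hence $\limsup_{t\to\infty}\frac{R_t-\frac{\beta}{2}t}{\log t}\ge\limsup_{t\to\infty}\frac{\rho_t-\frac{\beta}{2}t}{\log t}$, and the latter has, after the (negligible) time shift by $T$, the law of $\limsup_{s\to\infty}\frac{R_s-\frac{\beta}{2}s}{\log s}$ under $P^y$. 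Thus $P^x(\,\cdot<c)\le P^y(\,\cdot<c)$, and by symmetry the two are equal. Call the common value $p$.

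\emph{Step 2: $p=E\big[p^{|N_{t_0}|}\big]$ for every $t_0\ge0$.} Condition on $\mathcal F_{t_0}$. By \eqref{ineq_exp1}, $N_{t_0}$ is a.s.\ finite, and by the branching property the subtrees rooted at the particles $u\in N_{t_0}$ are, conditionally on $\mathcal F_{t_0}$, independent, the one rooted at $u$ being a $P^{X^u_{t_0}}$‑branching Brownian motion shifted by time $t_0$. Writing $R^{(u)}_t$ for the rightmost descendant of $u$ at time $t\ge t_0$, we have $R_t=\max_{u\in N_{t_0}}R^{(u)}_t$, and since the maximum is over a finite set,
\[
\limsup_{t\to\infty}\frac{R_t-\frac{\beta}{2}t}{\log t}=\max_{u\in N_{t_0}}\ \limsup_{t\to\infty}\frac{R^{(u)}_t-\frac{\beta}{2}t}{\log t},
\]
so that $\big\{\limsup_{t\to\infty}\frac{R_t-\frac{\beta}{2}t}{\log t}<c\big\}=\bigcap_{u\in N_{t_0}}\big\{\limsup_{t\to\infty}\frac{R^{(u)}_t-\frac{\beta}{2}t}{\log t}<c\big\}$. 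For each $u$, the shift by $t_0$ only changes $\frac{\beta}{2}t$ and $\log t$ by quantities that are negligible once divided by $\log t\to\infty$, so conditionally on $\mathcal F_{t_0}$ the event $\big\{\limsup_{t\to\infty}\frac{R^{(u)}_t-\frac{\beta}{2}t}{\log t}<c\big\}$ has probability $P^{X^u_{t_0}}\big(\limsup_{s\to\infty}\frac{R_s-\frac{\beta}{2}s}{\log s}<c\big)=p$ by Step~1. By conditional independence,
\[
P\Big(\limsup_{t\to\infty}\tfrac{R_t-\frac{\beta}{2}t}{\log t}<c\ \Big|\ \mathcal F_{t_0}\Big)=p^{|N_{t_0}|},
\]
and taking expectations gives $p=E\big[p^{|N_{t_0}|}\big]$.

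\emph{Step 3: let $t_0\to\infty$.} No particle ever dies, so $|N_{t_0}|$ is non‑decreasing in $t_0$; moreover $M_{t_0}=e^{-\frac{\beta^2}{2}t_0}\sum_{u\in N_{t_0}}e^{-\beta|X^u_{t_0}|}\le e^{-\frac{\beta^2}{2}t_0}|N_{t_0}|$, so $|N_{t_0}|\ge e^{\frac{\beta^2}{2}t_0}M_{t_0}\to\infty$ $P$‑a.s., since $M_{t_0}\to M_\infty>0$ $P$‑a.s.\ by \cite{BH14}. If $p<1$ then $p^{|N_{t_0}|}\to 0$ $P$‑a.s., and dominated convergence yields $p=\lim_{t_0\to\infty}E\big[p^{|N_{t_0}|}\big]=0$. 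Hence $p\in\{0,1\}$, which is \eqref{zero_one_eq}.
\end{proof}

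\noindent\emph{Remark on the difficulty.} There is no real obstacle here: the only point requiring care is the conditioning in Step~2, namely identifying the conditional law of the time‑shifted subtrees and justifying the identity expressing the $\limsup$ of $R_t$ as the maximum over $N_{t_0}$ of the $\limsup$s of the $R^{(u)}_t$; both are routine consequences of the branching property and of $|N_{t_0}|<\infty$.
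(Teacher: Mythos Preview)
Your proof is correct but takes a longer route than the paper's. The paper exploits the defining feature of the catalytic model: the first branching time $T$ is almost surely finite and occurs at the origin, so the two children at time $T$ each spawn an independent copy of the $P^0$-process. Writing $R^{(1)}_t,R^{(2)}_t$ for the rightmost particles in the two subtrees and absorbing the time shift by $T$ exactly as you do in Step~2, one obtains directly
\[
p=P\Big(\limsup_{t\to\infty}\tfrac{R^{(1)}_t-\frac{\beta}{2}t}{\log t}<c,\ \limsup_{t\to\infty}\tfrac{R^{(2)}_t-\frac{\beta}{2}t}{\log t}<c\Big)=p^2,
\]
so $p\in\{0,1\}$.

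Your argument is more general: you first show $p$ does not depend on the starting point (Step~1), then condition at a deterministic time $t_0$ to obtain $p=E[p^{|N_{t_0}|}]$ (Step~2), and finally send $t_0\to\infty$ using $|N_{t_0}|\to\infty$ (Step~3). This would work for a wide class of supercritical branching processes in which particles may branch away from the origin; here it is unnecessary, since the single branching at the origin already gives $p=p^2$ without invoking Step~1 or the limit in Step~3. The paper's approach is shorter precisely because both offspring start at $0$, so no appeal to the starting-point independence is needed.
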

\begin{proof}
Let $T$ be the time of the first branching and $R^{(1)}_t$, $R^{(2)}_t$, $t \geq 0$ the positions of the rightmost particles of the two subtrees initiated at time $T$. Then since $T$ is almost surely finite and $R^{(1)}_t$ and $R^{(2)}_t$, $t \geq 0$ are independent copies of $R_t$, $t \geq 0$ it follows that
\begin{align*}
P \Big( \limsup_{t \to \infty} \frac{R_t - \frac{\beta}2{}t}{\log t} < c \Big) 
= &P \Big( \limsup_{t \to \infty} \frac{R_{t+T} - \frac{\beta}{2}(t+T)}{\log (t+T)} < c \Big)\\
= &P \Big( \limsup_{t \to \infty} \frac{R_{t+T} - \frac{\beta}{2}t}{\log t} < c \Big)\\
= &P \Big( \limsup_{t \to \infty} \frac{R^{(1)}_t - \frac{\beta}2{}t}{\log t} < c , \
\limsup_{t \to \infty} \frac{R^{(2)}_t - \frac{\beta}2{}t}{\log t} < c \Big)\\
 = &P \Big( \limsup_{t \to \infty} \frac{R_t - \frac{\beta}2{}t}{\log t} < c \Big)^2. 
\end{align*}
Therefore
\[
P \Big( \limsup_{t \to \infty} \frac{R_t - \frac{\beta}{2}t}{\log t} < c \Big) \in \{0, 1\}.
\]
\end{proof}
The following proposition is the main result of this subsection
\begin{Proposition}
\label{prop_limsup_lower}
\begin{equation}
\label{main_limsup_lower}
\limsup_{t \to \infty} \frac{R_t - \frac{\beta}{2}t}{\log t} \geq \frac{1}{\beta} \qquad P \text{-a.s.}
\end{equation}
\end{Proposition}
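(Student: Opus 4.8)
The plan is to run a second-moment (Paley--Zygmund) argument on a carefully chosen family of events and then to promote ``positive probability'' to ``almost surely'' via the $0$--$1$ law. By Proposition~\ref{zero_one}, for every fixed $c>0$ the probability $P\big(\limsup_{t\to\infty}\frac{R_t-\frac{\beta}{2}t}{\log t}<c\big)$ is either $0$ or $1$; hence to obtain \eqref{main_limsup_lower} it suffices to prove that, for every $c\in(0,1/\beta)$,
\[
P\Big(\limsup_{t\to\infty}\frac{R_t-\frac{\beta}{2}t}{\log t}\ge c\Big)>0,
\]
since then this probability is in fact $1$, and intersecting the resulting almost sure events over $c=\frac{1}{\beta}-\frac{1}{k}$, $k\to\infty$, yields the claim. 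One cannot apply the second moment method at a single deterministic time: with $x=\frac{\beta}{2}t+c\log t$, identity \eqref{eq_expectation0} gives $E|N_t^{x}|=t^{-\beta c}\Phi\big((\frac{\beta}{2}t-c\log t)/\sqrt t\big)\sim t^{-\beta c}\to 0$, so each event $\{|N_t^{x}|>0\}$ has probability tending to $0$. To circumvent this I would average over a short time window: fix a small $\delta>0$ (how small is pinned down below) and a sequence $T_n\uparrow\infty$, set $\ell(t):=\frac{\beta}{2}t+c\log t$, and define
\[
Z_n:=\int_{T_n}^{(1+\delta)T_n}|N_t^{\ell(t)}|\,\mathrm{d}t,\qquad A_n:=\big\{\exists\, t\in[T_n,(1+\delta)T_n]:\ R_t\ge\ell(t)\big\}.
\]
Since $\{Z_n>0\}\subseteq A_n$, since $\{A_n\text{ infinitely often}\}\subseteq\big\{\limsup_{t\to\infty}\frac{R_t-\frac{\beta}{2}t}{\log t}\ge c\big\}$ (because $T_n\to\infty$), and since reverse Fatou gives $P(A_n\text{ i.o.})\ge\limsup_n P(A_n)$, everything reduces to proving $P(A_n)\ge\delta_0$ for some constant $\delta_0>0$ and all large $n$.

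For this lower bound I would invoke Paley--Zygmund, $P(A_n)\ge P(Z_n>0)\ge (E Z_n)^2/E[Z_n^2]$. The first moment comes from \eqref{eq_expectation0}: $E|N_t^{\ell(t)}|=t^{-\beta c}\Phi\big((\frac{\beta}{2}t-c\log t)/\sqrt t\big)=(1+o(1))t^{-\beta c}$ uniformly for $t\in[T_n,(1+\delta)T_n]$, so that $EZ_n=(1+o(1))\int_{T_n}^{(1+\delta)T_n}t^{-\beta c}\,\mathrm{d}t\asymp T_n^{\,1-\beta c}\to\infty$, where $\beta c<1$ is used. For the second moment, write $E[Z_n^2]=2\int_{T_n}^{(1+\delta)T_n}\!\int_s^{(1+\delta)T_n}E\big[|N_s^{\ell(s)}|\,|N_t^{\ell(t)}|\big]\,\mathrm{d}t\,\mathrm{d}s$ and feed each inner expectation into Proposition~\ref{second_moment_prop3} with $x_0=0$, $x=\ell(s)$, $y=\ell(t)$ (legitimate for large $n$ since $\ell$ is increasing and eventually positive). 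The leading term of \eqref{second_moment_ineq3} is $24\,e^{-\beta\ell(s)-\beta\ell(t)+\frac{\beta^2}{2}s+\frac{\beta^2}{2}t}=24\,s^{-\beta c}t^{-\beta c}$, whose double integral equals $24\big(\int_{T_n}^{(1+\delta)T_n}t^{-\beta c}\,\mathrm{d}t\big)^2=(1+o(1))\,24\,(EZ_n)^2$ --- exactly the shape needed for Paley--Zygmund once one knows $EZ_n\to\infty$.

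The real work, and the step I expect to be the main obstacle, is to show that after the double integration each of the four correction terms $e_i\big(0,s,t,\ell(s),\ell(t)\big)$ of \eqref{second_moment_ineq3} is $o\big((EZ_n)^2\big)=o\big(T_n^{\,2(1-\beta c)}\big)$. Two separate mechanisms are needed. The terms $e_1$ and $e_3$ are, up to constants, of the form $(\text{a normal tail that decays in }t-s)\cdot E|N_s^{\ell(s)}|$ --- this uses $\ell(t)-\ell(s)\ge\frac{\beta}{2}(t-s)$ --- so integrating first in $t$ leaves a bounded multiple of $E|N_s^{\ell(s)}|=O(s^{-\beta c})$, and then integrating in $s$ gives $O\big(T_n^{\,1-\beta c}\big)=o\big(T_n^{\,2(1-\beta c)}\big)$. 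The terms $e_2=16\,e^{\frac{\beta^2}{2}s}s^{-\beta c}\,\Phi\big(-\ell(t)/\sqrt{t-s}\big)$ and $e_4=\Phi\big(-\ell(s)/\sqrt s\big)$ are instead controlled by brute Gaussian smallness: on the window one has $t-s\le\delta T_n$ and $\ell(s),\ell(t)\ge\frac{\beta}{2}T_n$, hence $\Phi\big(-\ell(t)/\sqrt{t-s}\big)\le\Phi\big(-\frac{\beta}{2\sqrt\delta}\sqrt{T_n}\big)$ and $\Phi\big(-\ell(s)/\sqrt s\big)\le\Phi\big(-\frac{\beta}{2\sqrt 2}\sqrt{T_n}\big)$; since $e^{\frac{\beta^2}{2}s}\le e^{\beta^2 T_n}$ on the window, taking $\delta$ small enough that $\frac{\beta^2}{8\delta}>\beta^2$ (i.e.\ $\delta<\frac18$) makes even the worst-case product $e^{\frac{\beta^2}{2}s}\Phi\big(-\ell(t)/\sqrt{t-s}\big)$ super-exponentially small in $T_n$, so the double integrals of $e_2$ and of $e_4$ over the region of area $O(T_n^2)$ tend to $0$. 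This is precisely why the window must have length $\delta T_n$ with $\delta$ small --- on a long window such as $[T_n,2T_n]$ the term $e_2$ is not controllable. Altogether this gives $E[Z_n^2]\le 25\,(EZ_n)^2$ for all large $n$, hence $P(A_n)\ge P(Z_n>0)\ge\frac{1}{25}$. Reverse Fatou then yields $P(A_n\text{ i.o.})\ge\frac{1}{25}>0$, so $P\big(\limsup_{t\to\infty}\frac{R_t-\frac{\beta}{2}t}{\log t}\ge c\big)>0$, and Proposition~\ref{zero_one} upgrades this to $\limsup_{t\to\infty}\frac{R_t-\frac{\beta}{2}t}{\log t}\ge c$ $P$-a.s. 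As $c\in(0,1/\beta)$ was arbitrary, intersecting over $c=\frac{1}{\beta}-\frac{1}{k}$ gives \eqref{main_limsup_lower}.
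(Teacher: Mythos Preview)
Your argument is correct and follows the same overall strategy as the paper: reduce via the $0$--$1$ law (Proposition~\ref{zero_one}) to a positive-probability statement, then obtain that by Paley--Zygmund on a time-averaged version of $|N_t^{\ell(t)}|$, controlling the second moment through Proposition~\ref{second_moment_prop3} and showing the four error terms $e_i$ are negligible.

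The differences are purely technical. The paper works with a \emph{discrete} sum $\sum_{k=n}^{2n}|N_{t_k}^{x_k}|$ along the sequence $t_k=k^{1+\beta\epsilon}$, so that consecutive times are separated by gaps of order $k^{\beta\epsilon}\to\infty$ while the window $[t_n,t_{2n}]$ has fixed aspect ratio $2^{1+\beta\epsilon}$; you instead integrate continuously over a short window $[T_n,(1+\delta)T_n]$. This shows up most clearly in the treatment of $e_2$: you kill the factor $e^{\frac{\beta^2}{2}s}$ by a brute Gaussian tail $\Phi\bigl(-\tfrac{\beta}{2\sqrt\delta}\sqrt{T_n}\bigr)$, which forces $\delta<1/8$; the paper instead rewrites $e^{\beta x_l-\frac{\beta^2}{2}(t_l-t_k)}\Phi\bigl(-x_l/\sqrt{t_l-t_k}\bigr)$ by completing the square and bounds it by $\frac{1}{\sqrt{2\pi}}\frac{\sqrt{t_l-t_k}}{x_l}=O(t_n^{-1/2})$, which needs no restriction on the window width. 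Your description of $e_1$ as ``a normal tail in $t-s$ times $E|N_s^{\ell(s)}|$'' is slightly imprecise for the first summand $e^{-\beta\ell(t)+\frac{\beta^2}{2}(t-s)}E|N_s^{\ell(s)}|=t^{-\beta c}e^{-\frac{\beta^2}{2}s}E|N_s^{\ell(s)}|$, but the double integral of this piece is in fact $o(1)$ (thanks to $e^{-\frac{\beta^2}{2}s}$), so the conclusion is unaffected. Either packaging yields $E[Z_n^2]\le C(EZ_n)^2$ with $EZ_n\to\infty$, and hence the same lower bound.
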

\begin{proof}
In view of Proposition \ref{zero_one} it is sufficient to show that for all $\epsilon \in (0, \frac{1}{\beta})$ 
\begin{equation}
\label{main_limsup_lower1}
P \Big( \limsup_{t \to \infty} \frac{R_t - \frac{\beta}{2}t}{\log t} \geq \frac{1}{\beta} - \epsilon \Big) > 0.
\end{equation}
Let us take any such $\epsilon$ and fix it for the rest of the proof. If $(t_n)_{n \geq 1}$ is a deterministic sequence such that $t_n \nearrow \infty$ as $n \to \infty$ and 
\[
x_n := \frac{\beta}{2} t_n + (\frac{1}{\beta} - \epsilon) \log t_n \quad \text{ , }  n \geq 1
\]
then we have that
\begin{align*}
P \Big( \limsup_{t \to \infty} \frac{R_t - \frac{\beta}{2}t}{\log t} \geq \frac{1}{\beta} - \epsilon \Big) 
\geq &P \Big( \limsup_{n \to \infty} \frac{R_{t_n} - \frac{\beta}{2}t_n}{\log t_n} \geq \frac{1}{\beta} - \epsilon \Big)\\
= &P \Big( \big\{ \big\vert N_{t_n}^{x_n} \big\vert > 0 \big\} \text{ i.o. } \Big)\\
= &\lim_{n \to \infty} P \Big( \bigcup_{k \geq n} \big\{ \big\vert N_{t_k}^{x_k} \big\vert > 0 \big\} \Big)\\
\geq &\limsup_{n \to \infty} P \Big( \bigcup_{k = n}^{2n} \big\{ \big\vert N_{t_k}^{x_k} \big\vert > 0 \big\} \Big).
\end{align*}
It further follows from Paley-Zygmund inequality that
\begin{align*}
P \Big( \bigcup_{k = n}^{2n} \big\{ \big\vert N_{t_k}^{x_k} \big\vert > 0 \big\} \Big) = &P \Big( \sum_{k = n}^{2n} \big\vert N_{t_k}^{x_k} \big\vert > 0 \Big)\\
\geq &\frac{\Big( E \Big[ \sum_{k = n}^{2n} \big\vert N_{t_k}^{x_k} \big\vert \Big] \Big)^2}{E \Big[ \Big( \sum_{k = n}^{2n} \big\vert N_{t_k}^{x_k} \big\vert \Big)^2 \Big]}\\
= &\frac{\Big( \sum_{k = n}^{2n} E \big\vert N_{t_k}^{x_k} \big\vert \Big)^2}{\sum_{k=n}^{2n} E \Big[ \big\vert N_{t_k}^{x_k} \big\vert^2 \Big] + 2 \sum_{n \leq k < l \leq 2n} E \Big[ \big\vert N_{t_k}^{x_k} \big\vert \big\vert N_{t_l}^{x_l} \big\vert \Big]}.
\end{align*} 
Suppose that the sequence $(t_n)_{n \geq 1}$ could have been chosen in such a way that
\begin{enumerate}
\item
\begin{equation}
\label{condition1}
\sum_{k=n}^{2n} E \big\vert N_{t_k}^{x_k} \big\vert \to \infty \text{ as } n \to \infty,
\end{equation}
\item for all $n$, $k$ and $l$ such that $n \leq k < l \leq 2n$
\begin{equation}
\label{condition2}
E \Big[ \big\vert N_{t_k}^{x_k} \big\vert \big\vert N_{t_l}^{x_l} \big\vert \Big] \leq \alpha_n E \big\vert N_{t_k}^{x_k} \big\vert E \big\vert N_{t_l}^{x_l} \big\vert
\end{equation}
where $(\alpha_n)_{n \geq 1}$ is some converging sequence such that $\alpha_n \to \alpha \in [1, \infty)$ (equivalently, $E[|N_{t_k}^{x_k}| |N_{t_l}^{x_l}| ] \leq C E |N_{t_k}^{x_k}| E |N_{t_l}^{x_l}|$ for some constant $C \geq 1$ and all $n$ sufficiently large).
\end{enumerate}
Then, as we show below, from \eqref{condition1} it directly follows that
\begin{equation}
\label{condition3}
\sum_{k=n}^{2n} E \Big[ \big\vert N_{t_k}^{x_k} \big\vert^2 \Big] \sim \sum_{k=n}^{2n} E \big\vert N_{t_k}^{x_k} \big\vert 
\text{ as } n \to \infty
\end{equation}
and moreover we would then have that
\begin{align*}
P \Big( \bigcup_{k = n}^{2n} \big\{ \big\vert N_{t_k}^{x_k} \big\vert > 0 \big\} \Big) \geq &\frac{\Big( \sum_{k = n}^{2n} E \big\vert N_{t_k}^{x_k} \big\vert \Big)^2}{\sum_{k=n}^{2n} E \Big[ \big\vert N_{t_k}^{x_k} \big\vert^2 \Big] + 2 \sum_{n \leq k < l \leq 2n} E \Big[ \big\vert N_{t_k}^{x_k} \big\vert \big\vert N_{t_l}^{x_l} \big\vert \Big]}\\
\geq &\frac{\Big(\sum_{k = n}^{2n} E \big\vert N_{t_k}^{x_k} \big\vert \Big)^2}{\sum_{k=n}^{2n} E \Big[ \big\vert N_{t_k}^{x_k} \big\vert^2 \Big] + 2 \alpha_n \sum_{n \leq k < l \leq 2n} E \big\vert N_{t_k}^{x_k} \big\vert E \big\vert N_{t_l}^{x_l} \big\vert}\\
\geq &\frac{\Big( \sum_{k = n}^{2n} E \big\vert N_{t_k}^{x_k} \big\vert \Big)^2}{\sum_{k=n}^{2n} E \Big[ \big\vert N_{t_k}^{x_k} \big\vert^2 \Big] + \alpha_n \Big( E \Big[ \sum_{k = n}^{2n} \big\vert N_{t_k}^{x_k} \big\vert \Big] \Big)^2}\\
\to &\frac{1}{\alpha} \quad \text{ as } n \to \infty.
\end{align*}
This would establish \eqref{main_limsup_lower1} and complete the proof of Proposition \ref{prop_limsup_lower}.

Let us now take 
\[
t_n := n^{1 + \beta \epsilon} \qquad \text{ , } n \geq 1.
\]
Below we are going to verify that \eqref{condition1}, \eqref{condition2} and \eqref{condition3} all hold for this choice of $(t_n)_{n \geq 1}$ (in fact, the choice $t_n = n^{1+\delta}$ for any $\delta \in (0, \frac{\beta \epsilon}{1 - \beta \epsilon})$ works just as fine).

\underline{Proof of \eqref{condition1}:}

Take any $n$ and $k$ such that $n \leq k \leq 2n$. From Proposition \ref{expectation} we know that 
\begin{align}
\label{expectation4}
E \big\vert N_{t_k}^{x_k} \big\vert = &\mathrm{e}^{- \beta x_k + \frac{\beta^2}{2}t_k} \Phi \Big( \frac{\beta t_k - x_k}{\sqrt{t_k}} \Big) \nonumber\\
= &\mathrm{e}^{- \beta x_k + \frac{\beta^2}{2}t_k} \Phi \Big( \frac{\beta}{2} \sqrt{t_k} - \big( \frac{1}{\beta} - \epsilon\big) \frac{\log t_k}{\sqrt{t_k}} \Big) \nonumber\\
\geq &\alpha_n' \mathrm{e}^{- \beta x_k + \frac{\beta^2}{2}t_k},
\end{align}
where
\[
\alpha_n' = \Phi \Big( \frac{\beta}{2} \sqrt{t_n} - \big( \frac{1}{\beta} - \epsilon\big) \frac{\log t_{2n}}{\sqrt{t_n}} \Big) \to 1 
\]
as $n \to \infty$. Hence
\[
\sum_{k=n}^{2n} E \big\vert N_{t_k}^{x_k} \big\vert \geq \alpha_n' \sum_{k=n}^{2n} \mathrm{e}^{- \beta x_k + \frac{\beta^2}{2}t_k} = \alpha_n' \sum_{k=n}^{2n} \frac{1}{(t_k)^{1 - \beta \epsilon}} \geq \alpha_n' \frac{n}{(t_{2n})^{1 - \beta \epsilon}} = \frac{\alpha_n'}{2^{1 - \beta^2 \epsilon^2}} n^{\beta^2 \epsilon^2} \to \infty
\]
as $n \to \infty$.

\underline{Proof of \eqref{condition3}:}

Trivially $E \vert N_{t_k}^{x_k} \vert \leq E \big[ \vert N_{t_k}^{x_k} \vert^2 \big]$ . Also from Proposition \ref{second_moment_prop} and inequality \eqref{expectation4} we know that
\begin{align*}
E \Big[ \big\vert N_{t_k}^{x_k} \big\vert^2 \Big] \leq &E \big\vert N_{t_k}^{x_k} \big\vert + 
8 \mathrm{e}^{- 2 \big( \beta x_k - \frac{\beta^2}{2} t_k \big)}\\ 
\leq &\Big( 1 + \frac{8}{\alpha_n'} \mathrm{e}^{- \beta x_k + \frac{\beta^2}{2} t_k} \Big)E \big\vert N_{t_k}^{x_k} \big\vert\\
\leq &\Big( 1 + \frac{8}{\alpha_n'} \frac{1}{(t_n)^{1 - \beta \epsilon}} \Big)E \big\vert N_{t_k}^{x_k} \big\vert.
\end{align*}
Thus
\[
\sum_{k=n}^{2n} E \big\vert N_{t_k}^{x_k} \big\vert \leq \sum_{k=n}^{2n} E \big[ \big\vert N_{t_k}^{x_k} \big\vert^2 \big] \leq (1 + \alpha_n'' ) \sum_{k=n}^{2n} E \big\vert N_{t_k}^{x_k} \big\vert,
\]
where
\[
\alpha_n'' = \frac{8}{\alpha_n'} \frac{1}{(t_n)^{1 - \beta \epsilon}} \to 0 \quad \text{ as } n \to \infty
\]
and this proves \eqref{condition3}.

\underline{Proof of \eqref{condition2}:}

Let us  take any $k$, $l$ and $n$ such that $n \leq k < l \leq 2n$. From Proposition \ref{second_moment_prop3} and inequality \eqref{expectation4} we have that 
\begin{align*}
E \Big[ \big\vert N_{t_k}^{x_k} \big\vert \big\vert N_{t_l}^{x_l} \big\vert \Big] \leq &24 \mathrm{e}^{- \beta x_k - \beta x_l + \frac{\beta^2}{2}t_k + \frac{\beta^2}{2}t_l} + \sum_{i = 1}^4 e_i(0, t_k, t_l, x_k, x_l)\\
\leq &\frac{24}{(\alpha_n')^2} E \big\vert N_{t_k}^{x_k} \big\vert E \big\vert N_{t_l}^{x_l} \big\vert + \sum_{i = 1}^4 e_i(0, t_k, t_l, x_k, x_l).
\end{align*}
Let us now check that the contribution from the $e_i$ terms is negligible. Firstly, 
\begin{align*}
\mathrm{e}^{\beta x_l - \frac{\beta^2}{2}t_l} \Phi \Big( - \frac{x_l - x_k}{\sqrt{t_l - t_k}} \Big) = &(t_l)^{1 - \beta\epsilon}\Phi \Big( - \frac{\beta}{2} \sqrt{t_l - t_k} - \big( \frac{1}{\beta} - \epsilon\big) 
\frac{\log t_l - \log t_k}{\sqrt{t_l - t_k}} \Big)\\
\leq &(t_l)^{1 - \beta\epsilon}\Phi \Big( - \frac{\beta}{2} \sqrt{t_l - t_k}\Big)\\
\leq &(t_{2n})^{1 - \beta\epsilon}\Phi \Big( - \frac{\beta}{2} \sqrt{t_{n+1} - t_n}\Big)\\
= &(2n)^{1 - \beta^2 \epsilon^2}\Phi \Big( - \frac{\beta}{2} \sqrt{(n+1)^{1 + \beta \epsilon} - n^{1 + \beta \epsilon}}\Big)\\
\leq & 2n \Phi \Big( - \frac{\beta}{2} n^{\frac{\beta \epsilon}{2}}\Big) \quad =: \alpha_n^{'''} \quad (\to 0 \text{ as } n \to \infty)
\end{align*}
and hence
\begin{align*}
e_1(0, t_k, t_l, x_k, x_l) = &\bigg( \mathrm{e}^{-\frac{\beta^2}{2}t_k} + \mathrm{e}^{\beta x_l - \frac{\beta^2}{2} t_l} \Phi \Big( - \frac{x_l-x_k}{\sqrt{t_l-t_k}} \Big) \bigg) \mathrm{e}^{- \beta x_l + \frac{\beta^2}{2} t_l} E^{x_0} \big\vert N_{t_k}^{x_k} \big\vert\\
\leq &\big( \mathrm{e}^{-\frac{\beta^2}{2}t_n} + \alpha_n''' \big) \frac{1}{\alpha_n'} E^{x_0} \big\vert N_{t_k}^{x_k} \big\vert E^{x_0} \big\vert N_{t_l}^{x_l} \big\vert.
\end{align*}
Also,
\begin{align*}
\mathrm{e}^{\beta x_l - \frac{\beta^2}{2}(t_l - t_k)} \Phi \Big( - \frac{x_l}{\sqrt{t_l - t_k}} \Big) \leq  
&\mathrm{e}^{\beta x_l - \frac{\beta^2}{2}(t_l - t_k)} \frac{1}{\sqrt{2 \pi}} \frac{\sqrt{t_l - t_k}}{x_l} \mathrm{e}^{- \frac{1}{2} \frac{x_l^2}{t_l - t_k}}\\
= &\frac{1}{\sqrt{2 \pi}} \frac{\sqrt{t_l - t_k}}{x_l} \mathrm{e}^{- \frac{1}{2(t_l - t_k)} \big(x_l - \beta(t_l - t_k)\big)^2}\\
\leq &\frac{1}{\sqrt{2 \pi}} \frac{\sqrt{t_{2n}}}{x_n}\\
\leq &\frac{\sqrt{2}}{\beta \sqrt{\pi}} \frac{\sqrt{t_{2n}}}{t_n}\\
= &\frac{\sqrt{2^{2 + \beta \epsilon}}}{\beta \sqrt{\pi}} n^{- \frac{1 + \beta \epsilon}{2}} \ =: \alpha_n^{(iv)} \quad (\to 0 \text{ as } n \to \infty)
\end{align*}
and hence 
\begin{align*}
e_2(0, t_k, t_l, x_k, x_l) = &16 \mathrm{e}^{- \beta x_k + \frac{\beta^2}{2}t_k} \mathrm{e}^{- \beta x_l + \frac{\beta^2}{2}t_l} \mathrm{e}^{\beta x_l - \frac{\beta^2}{2}(t_l - t_k)} \Phi \Big( - \frac{x_l}{\sqrt{t_l - t_k}} \Big)\\
\leq & \frac{16 \alpha_n^{(iv)}}{(\alpha_n')^2}  E^{x_0} \big\vert N_{t_k}^{x_k} \big\vert E^{x_0} \big\vert N_{t_l}^{x_l} \big\vert.
\end{align*}
Also,
\begin{align*}
\mathrm{e}^{\beta x_k - \frac{\beta^2}{2}t_k} \Phi \Big( \frac{x_l - x_k}{\sqrt{t_l - t_k}} - \beta \sqrt{t_l - t_k} \Big) 
= &(t_k)^{1 - \beta \epsilon} \Phi \Big( - \frac{\beta}{2} \sqrt{t_l - t_k} + \big( \frac{1}{\beta} - \epsilon \big) 
\frac{\log t_l - \log t_k}{\sqrt{t_l - t_k}} \Big)\\
\leq &(t_{2n})^{1 - \beta \epsilon} \Phi \Big( - \frac{\beta}{2} \sqrt{t_{n+1} - t_n} + \big( \frac{1}{\beta} - \epsilon \big) 
\log t_{2n} \Big)\\
\leq &2 n \Phi \Big( - \frac{\beta}{2} n^{\frac{\beta \epsilon}{2}} + \frac{1}{\beta} \log (2n) \Big) \ =: \alpha_n^{(v)} \ (\to 0 \text{ as } n \to \infty)
\end{align*}
and hence 
\begin{align*}
e_3(0, t_k, t_l, x_k, x_l) = &\mathrm{e}^{- \beta x_k + \frac{\beta^2}{2}t_k} \mathrm{e}^{- \beta x_l + \frac{\beta^2}{2}t_l} \mathrm{e}^{\beta x_k - \frac{\beta^2}{2}t_k} \Phi \Big( \frac{x_l - x_k}{\sqrt{t_l - t_k}} - 
\beta \sqrt{t_l - t_k} \Big)\\ 
\leq &\frac{\alpha_n^{(v)}}{(\alpha_n')^2}  E^{x_0} \big\vert N_{t_k}^{x_k} \big\vert E^{x_0} \big\vert N_{t_l}^{x_l} \big\vert.
\end{align*}
Finally,
\begin{align*}
\mathrm{e}^{\beta x_k - \frac{\beta^2}{2}t_k} \mathrm{e}^{\beta x_l - \frac{\beta^2}{2}t_l} \Phi \Big( - \frac{x_k}{\sqrt{t_k}} \Big) = &(t_k)^{1 - \beta \epsilon} (t_l)^{1 - \beta \epsilon} \Phi \Big( - \frac{\beta}{2} \sqrt{t_k} - \big( \frac{1}{\beta} - \epsilon \big) \frac{\log t_k}{\sqrt{t_k}} \Big)\\ 
\leq &(t_{2n})^{2 - 2\beta \epsilon}\Phi \Big( - \frac{\beta}{2} \sqrt{t_n} \Big)\\ 
\leq &4 n^2 \Phi \Big( - \frac{\beta}{2} \sqrt{n} \Big) \quad =: \alpha_n^{(vi)} \ (\to 0 \text{ as } n \to \infty)
\end{align*}
and hence
\begin{align*}
e_4(0, t_k, t_l, x_k, x_l) = &\mathrm{e}^{\beta x_k - \frac{\beta^2}{2}t_k} \mathrm{e}^{\beta x_l - \frac{\beta^2}{2}t_l} \Phi \Big( - \frac{x_k}{\sqrt{t_k}} \Big) \mathrm{e}^{- \beta x_k + \frac{\beta^2}{2}t_k} \mathrm{e}^{- \beta x_l + \frac{\beta^2}{2}t_l}\\ 
\leq &\frac{\alpha_n^{(vi)}}{(\alpha_n')^2}  E^{x_0} \big\vert N_{t_k}^{x_k} \big\vert E^{x_0} \big\vert N_{t_l}^{x_l} \big\vert.
\end{align*}
\end{proof}

%----------------------------------------------------------------------------------------------------
\subsection{Lower bound for \eqref{main_liminf}}

\begin{Proposition}
\label{prop_liminf_lower}
\begin{equation}
\label{main_liminf_lower}
\liminf_{t \to \infty} \frac{R_t - \frac{\beta}{2}t}{\log \log t} \geq - \frac{1}{\beta} \qquad P \text{-a.s.}
\end{equation}
\end{Proposition}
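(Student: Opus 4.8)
The plan is to show that for each $\epsilon>0$ the bound $R_t\ge\frac{\beta}{2}t-(\frac1\beta+\epsilon)\log\log t$ holds for all large $t$, $P$-a.s.; intersecting over $\epsilon\downarrow0$ along a countable sequence then yields \eqref{main_liminf_lower}. By the reduction at the start of Section 3 we may take $x_0=0$. Fix $\epsilon>0$ and a rapidly growing deterministic grid $t_k\nearrow\infty$; it will transpire that $t_k=\exp(k^p)$ with $p=p(\beta,\epsilon)$ a large constant works, so that $\log\log t_k=p\log k$. The proof has three parts: a lower-tail estimate for $R_t-\frac\beta2 t$ that is uniform in $t$; a Borel--Cantelli step at the times $t_k$; and a Brownian-oscillation step filling in the intervals $(t_k,t_{k+1})$.

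The core estimate is a bound $P\big(R_t-\frac\beta2 t\le -a\big)\le\psi(a)$, valid for all $t\ge t_0(a)$, with $\psi(a)\to0$ at least polynomially in $\mathrm{e}^{\beta a}$. Note that the second-moment machinery of Section 2 is useless here: writing $\{R_t-\frac\beta2 t\le -a\}=\{|N_t^{\beta t/2-a}|=0\}$ and combining \eqref{second_moment_ineq} with \eqref{eq_expectation0}, the ratio $E[|N_t^x|^2]/(E|N_t^x|)^2$ stays bounded away from $1$ (it stays near $8$) as $x$ approaches $\frac\beta2 t$ --- the ``single large family'' effect that already forced the use of Paley--Zygmund rather than Chebyshev in Subsection 3.2 --- so $P(|N_t^x|=0)$ is only bounded away from $1$, not small. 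Instead I would exploit the hierarchical branching structure through the a.s. decomposition
\[
M_\infty\ =\ \mathrm{e}^{-\frac{\beta^2}{2}s}\sum_{v\in N_s}M^{(v)}_\infty,
\]
where, given $\mathcal F_s$, the $M^{(v)}_\infty$ are independent, $M^{(v)}_\infty$ has the law of the additive-martingale limit of a catalytic BBM started from $X^v_s$, and $P^x(M_\infty>0)=1$ for every $x$. Taking $s=s(\delta)$ of order $\log(1/\delta)$ there are of order $\mathrm{e}^{\beta^2 s/2}\approx1/\delta$ such copies; a Chernoff bound applied to the (at least) order-$\log(1/\delta)$ of them whose founders lie in $[-1,1]$ (each contributing an independent positive amount of mean at least $\mathrm e^{-\beta}$) gives $P(M_\infty\le\delta)\le\delta^{\gamma}$ for some $\gamma>0$, once one controls the probability of having too few founders near the origin --- itself handled by a short bootstrap down the tree. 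Together with \eqref{rightmost_law}, which gives $\lim_{t\to\infty}P(R_t-\frac\beta2 t\le -a)=E[\exp(-\mathrm e^{\beta a}M_\infty)]$, and a conditioning on $\mathcal F_s$ that re-expresses $R_t=\max_{v\in N_s}R^{(v)}_{t-s}$ (the maximum over $v\in N_s$ of the rightmost particle at time $t$ in the subtree rooted at $v$, these being conditionally independent given $\mathcal F_s$) as a maximum of conditionally independent copies, so as to obtain uniformity in $t$ at moving thresholds, this produces $\psi$.

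Granted $\psi$, set $a_k=(\frac1\beta+\frac\epsilon2)\log\log t_k$; then $\sum_k\psi(a_k)\le C\sum_k\mathrm e^{-\gamma\beta a_k}=C\sum_k k^{-\gamma p(1+\beta\epsilon/2)}<\infty$ for $p$ large, so by Borel--Cantelli, $P$-a.s. $R_{t_k}\ge\frac\beta2 t_k-(\frac1\beta+\frac\epsilon2)\log\log t_k$ for all large $k$. To reach all $t\in[t_k,t_{k+1}]$, pick $u^*\in N_{t_k}$ with $X^{u^*}_{t_k}=R_{t_k}$ and follow one descendant line $(Y_s)_{s\in[t_k,t_{k+1}]}$ of $u^*$. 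Since $R_{t_k}\asymp\frac\beta2 t_k$ is very far from the catalyst while $t_{k+1}-t_k\ll t_k^{2}$ for $p$ large, this line avoids the origin on $[t_k,t_{k+1}]$ except on an event of super-summable probability $\le2\Phi(-c\,t_k^{1/2-o(1)})$; off it, $(Y_s)$ is a Brownian motion and $R_t\ge Y_t$. Using $\log\log t\ge\log\log t_k$ for $t\ge t_k$, the target $R_t\ge\frac\beta2 t-(\frac1\beta+\epsilon)\log\log t$ reduces to requiring that $Y_{t_k+s}-Y_{t_k}-\frac\beta2 s$ stay above $-\frac\epsilon2\log\log t_k$ for $s\in[0,t_{k+1}-t_k]$; since $P\big(\inf_{s\ge0}(B_s-\frac\beta2 s)<-c\big)=\mathrm e^{-\beta c}$, this fails with probability at most $(\log t_k)^{-\beta\epsilon/2}=k^{-p\beta\epsilon/2}$, summable once $p>2/(\beta\epsilon)$. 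A further Borel--Cantelli application completes the argument.

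The main obstacle is the core estimate of the second paragraph: a lower-tail bound for $M_\infty$ --- equivalently, an upper bound on $P(R_t-\frac\beta2 t\le -a)$ uniform in $t$ --- that decays fast enough. This is precisely where one must use the self-similar (smoothing-equation) structure of $M_\infty$, since, as observed, no second-moment input on $|N_t^x|$ helps; the delicate points are the bootstrap controlling the number of well-placed founders at an intermediate time, and the passage from the fixed-threshold statement \eqref{rightmost_law} to a bound uniform over the growing thresholds $a_k$. Everything after that --- the choice of $p$ and the two Borel--Cantelli/continuity steps --- is routine bookkeeping.
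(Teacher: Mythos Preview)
Your interpolation step contains a fatal sign error. You claim $P\big(\inf_{s\ge0}(B_s-\tfrac{\beta}{2}s)<-c\big)=\mathrm{e}^{-\beta c}$, but this identity holds for Brownian motion with \emph{positive} drift; with negative drift $-\tfrac{\beta}{2}$ one has $\inf_{s\ge0}(B_s-\tfrac{\beta}{2}s)=-\infty$ almost surely. Concretely, the descendant line $(Y_s)$ you follow is, away from the origin, a driftless Brownian motion (in catalytic BBM there is no branching off the origin, so a particle sitting near $\tfrac{\beta}{2}t_k$ and its offspring do not branch at all over $[t_k,t_{k+1}]$). Hence $Y_{t_k+s}-Y_{t_k}-\tfrac{\beta}{2}s$ ends near $-\tfrac{\beta}{2}(t_{k+1}-t_k)$, and since $t_{k+1}-t_k$ is at least of order $t_k$ for $t_k=\exp(k^p)$, this is astronomically below $-\tfrac{\epsilon}{2}\log\log t_k$. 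No single line can sustain speed $\tfrac{\beta}{2}$; that speed is produced by the branching at the origin, not by any individual trajectory.

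This is not repairable by changing the grid. The best unconditional bound one can hope for is $P(R_t-\tfrac{\beta}{2}t\le -a)\asymp \mathrm{e}^{-\beta a}$ (this is exactly what \eqref{rightmost_law} together with $P(M_\infty\le\delta)\asymp\delta$ would give, and is consistent with the constant $-\tfrac{1}{\beta}$ being sharp), so along the integer grid $\psi(a_n)\asymp(\log n)^{-(1+\beta\epsilon/2)}$ is not summable, while any grid sparse enough to make it summable leaves gaps over which a single Brownian line falls hopelessly behind. The paper avoids this trap by conditioning on $\mathcal{F}_{s_n}$ for an intermediate time $s_n$ with $s_n=o(n)$ but $(\log n)\mathrm{e}^{-\beta^2 s_n/2}\to 0$: on the event $\mathcal{A}_n=\{M_{s_n}\ge(\log n)^{-\beta\epsilon/2},\ |X^u_{s_n}|\le\beta s_n\ \forall u\}$ one applies Paley--Zygmund \emph{separately to each of the $|N_{s_n}|$ subtrees} (Proposition~\ref{pre_liminf_lower}), obtaining $P^{X^u_{s_n}}(|\tilde N^{n,\epsilon}_{n-s_n}|>0)\ge C(\log n)^{1+\beta\epsilon}\mathrm{e}^{-\beta|X^u_{s_n}|-\beta^2 s_n/2}$. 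Summing the exponents over $u\in N_{s_n}$ produces $M_{s_n}$, and the conditional probability that \emph{all} subtrees miss is at most $\exp\{-C(\log n)^{1+\beta\epsilon/2}\}$, which is summable over the integer grid; the ``filling in'' is then handled over unit intervals $[n,n+1]$ by building the infimum directly into the definition of $\tilde N^{n,\epsilon}_n$. Your instinct to use the hierarchical structure is right, but it must be deployed via conditioning on $\mathcal{F}_{s_n}$ and the martingale $M_{s_n}$, not via an unconditional tail bound plus a sparse grid.
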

\begin{proof}
For $t \geq 0$, $n \in \{0 , 1, 2, \cdots \}$ and $\epsilon > 0$ let us consider the following subsets of $N_{t+1}$:
\[
\tilde{N}^{n, \epsilon}_t := \Big\{ u \in N_{t+1} : \inf_{s \in [t, t+1]} X^u_s \geq \frac{\beta}{2}(n+1) - \big(\frac{1}{\beta} + \epsilon \big)\log\log (n+1) \Big\}.
\]
Note that 
\[
\big\{ R_t < \frac{\beta}{2}t - \big( \frac{1}{\beta} + \epsilon \big) \log \log t \text{ for some } t \in [n, n+1] \big\} \ \subseteq \ \big\{ |\tilde{N}^{n, \epsilon}_n| = 0 \big\}
\]
and so in order to prove Proposition \ref{main_liminf_lower} it is sufficient to show that for all $\epsilon > 0$, 
\[
P \big( \big\{ |\tilde{N}^{n, \epsilon}_n| = 0 \big\} \text{ i.o.}\big) = 0.
\]

For the rest of the proof let us fix any $\epsilon > 0$ and a deterministic sequence of times $s_n$, $n \geq 1$ such that $n - s_n > 0$ for all $n \geq 1$, $s_n = o(n)$ and $(\log n) \mathrm{e}^{- \frac{\beta^2}{2}s_n} \to 0$ as $n \to \infty$ (e.g., we can take $s_n = \sqrt{n - 1}$, $n \geq 1$).

If we define the sequence of events
\[
\mathcal{A}_n := \Big\{ \big\vert X^u_{s_n} \big\vert \leq \beta s_n \ \forall u \in s_n \ , \ M_{s_n} \geq (\log n)^{-\frac{\beta \epsilon}{2}} \Big\} \in \mathcal{F}_{s_n}, \quad n \geq 1
\]
then, from \eqref{rightmost_as} and symmetry and also from the fact that $M_{s_n}$ converges almost surely to a strictly positive limit, $P (\mathcal{A}_n \text{ ev.}) = 1$. Therefore
\[
P \big( \big\{ \big\vert\tilde{N}^{n, \epsilon}_n\big\vert = 0 \big\} \text{ i.o.}\big) = P \big( \big\{ \{ \big\vert \tilde{N}^{n, \epsilon}_n \big\vert = 0 \}, \  \mathcal{A}_n \big\} \text{ i.o.}\big).
\]
and so it is sufficient for us to verify that 
\begin{equation}
\label{finite_sum_3_3}
\sum_{n = 1}^\infty P \big( \{ \big\vert \tilde{N}^{n, \epsilon}_n \big\vert = 0 \} , \ \mathcal{A}_n \big) < \infty.
\end{equation}
From the Markov property we have that 
\begin{align*}
P \big( \{ \big\vert \tilde{N}^{n,\epsilon}_n \big\vert = 0 \}, \ \mathcal{A}_n \big) = &E \Big[ \mathbf{1}_{\mathcal{A}_n} \prod_{u \in N_{s_n}} P^{X^u_{s_n}} \big( \big\vert \tilde{N}^{n, \epsilon}_{n-s_n} \big\vert = 0 \big) \Big]\\
= &E \Big[ \mathbf{1}_{\mathcal{A}_n} \prod_{u \in N_{s_n}} \Big( 1 - P^{X^u_{s_n}} \big( \big\vert \tilde{N}^{n,\epsilon}_{n-s_n} \big\vert > 0 \big) \Big) \Big]\\
\leq &E \Big[ \mathbf{1}_{\mathcal{A}_n} \exp \Big\{ - \sum_{u \in N_{s_n}} P^{X^u_{s_n}} \big( \big\vert \tilde{N}^{n,\epsilon}_{n-s_n} \big\vert > 0 \big) \Big\} \Big],
\end{align*}
where for the last line we also used the trivial fact that $1 - x \leq \mathrm{e}^{-x}$, $x \in \mathbb{R}$. 

Proposition \ref{pre_liminf_lower}, which we prove below, then says that there exists some strictly positive constant $C$ such that for all $n$ sufficiently large, on the event $\mathcal{A}_n$,
\begin{equation}
\label{pre_ineq}
P^{X^u_{s_n}} \big( \big\vert \tilde{N}^{n,\epsilon}_{n-s_n} \big\vert > 0 \big) \geq C (\log n)^{1 + \beta \epsilon} \mathrm{e}^{- \beta |X^u_{s_n}| - \frac{\beta^2}{2}s_n}.
\end{equation}
\Big( Informally, one may say that 
\begin{align*}
P^{X^u_{s_n}} \big( \big\vert \tilde{N}^{n,\epsilon}_{n-s_n} \big\vert > 0 \big) \approx E^{X^u_{s_n}} \big\vert \tilde{N}^{n,\epsilon}_{n-s_n} \big\vert \approx &E^{X^u_{s_n}} \Big\vert N^{\frac{\beta}{2}(n+1) - (\frac{1}{\beta} + \epsilon )\log\log (n+1)}_{n-s_n+1} \Big\vert\\ 
\approx &(\log n)^{1 + \beta \epsilon} \mathrm{e}^{- \beta |X^u_{s_n}| - \frac{\beta^2}{2}s_n}. \Big)
\end{align*}
Thus
\begin{align*}
E \Big[ \mathbf{1}_{\mathcal{A}_n} \exp \Big\{ - \sum_{u \in N_{s_n}} P^{X^u_{s_n}} \big( \big\vert \tilde{N}^{n,\epsilon}_{n-s_n} \big\vert > 0 \big) \Big\} \Big] 
\leq &E \Big[ \mathbf{1}_{\mathcal{A}_n} \mathrm{e}^{- C M_{s_n} (\log n)^{1 + \beta \epsilon}} \Big]\\
\leq &E \Big[ \mathbf{1}_{\mathcal{A}_n} \mathrm{e}^{- C (\log n)^{1 + \frac{\beta \epsilon}{2}}} \Big]\\
\leq &\Big( \frac{1}{n} \Big)^{C (\log n)^{\frac{\beta \epsilon}{2}}}\\
\leq &\frac{1}{n^2}
\end{align*}
for all $n$ sufficiently large. This proves \eqref{finite_sum_3_3} and consequently Proposition \ref{main_liminf_lower}.
\end{proof}
We complete this subsection with the proof of \eqref{pre_ineq}, which is stated as a separate result to lighten the proof of Proposition \ref{main_liminf_lower}.
\begin{Proposition}
\label{pre_liminf_lower}
Let $\epsilon > 0$ be some constant and $(s_n)_{n \geq 1}$ a sequence such that $n - s_n > 0$ for all $n \geq 1$ and on one hand $s_n = o(n)$ while on the other hand $(\log n) \mathrm{e}^{- \frac{\beta^2}{2}s_n} \to 0$ as $n \to \infty$. 

Then there exists a strictly positive constant $C$ such that for all $x_0$ with $|x_0| \leq \beta s_n$, 
\[
P^{x_0} \big( |\tilde{N}^{n, \epsilon}_{n - s_n}| > 0 \big) \geq C (\log n)^{1 + \beta \epsilon} \mathrm{e}^{- \beta |x_0| - \frac{\beta^2}{2}s_n} 
\]
for all $n$ sufficiently large.
\end{Proposition}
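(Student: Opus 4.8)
The plan is to run a second moment (Paley--Zygmund) argument, but applied to the ordinary count of particles above a fixed level at the single time $n-s_n$ rather than directly to the path-constrained quantity $|\tilde N^{n,\epsilon}_{n-s_n}|$, since Propositions~\ref{expectation} and \ref{second_moment_prop} supply exactly the moment estimates one needs for such counts. Write $\ell_n:=\tfrac\beta2(n+1)-(\tfrac1\beta+\epsilon)\log\log(n+1)$ for the level in the definition of $\tilde N^{n,\epsilon}_{n-s_n}$; since $\ell_n\to\infty$ we have $\ell_n\ge1$ for all large $n$. The first step is a reduction: conditioning on $\mathcal F_{n-s_n}$ and using that distinct particles of $N_{n-s_n}$ spawn independent subtrees,
\[
P^{x_0}\big(|\tilde N^{n,\epsilon}_{n-s_n}|>0\big)\ \ge\ q\,P^{x_0}\big(|N_{n-s_n}^{\ell_n+1}|>0\big),\qquad q:=\Prob\big(\inf\nolimits_{s\in[0,1]}B_s\ge-1\big)>0,
\]
where $B$ is a standard Brownian motion from $0$. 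Indeed, if $v\in N_{n-s_n}$ has $X^v_{n-s_n}\ge\ell_n+1$, then on the event (of probability $\ge q$, since $\ell_n-X^v_{n-s_n}\le-1$) that the Brownian trajectory continuing $v$ stays $\ge\ell_n$ on $[n-s_n,n-s_n+1]$ that trajectory never meets the origin, hence accrues no local time and does not branch, so at time $n-s_n+1$ it is a single particle lying in $\tilde N^{n,\epsilon}_{n-s_n}$; summing the independent contributions of the $v$'s via $1-(1-q)^k\ge q$ for $k\ge1$ gives the displayed bound.

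It then remains to bound $P^{x_0}(|N_{n-s_n}^{\ell_n+1}|>0)$ from below, which I would do via $P^{x_0}(|N_{n-s_n}^{\ell_n+1}|>0)\ge (E^{x_0}|N_{n-s_n}^{\ell_n+1}|)^2/E^{x_0}[|N_{n-s_n}^{\ell_n+1}|^2]$. For the mean, Proposition~\ref{expectation} with $t=n-s_n$, $x=\ell_n+1$ (dropping the non-negative term and simplifying the exponent to $-\beta|x_0|-\beta-\tfrac{\beta^2}2-\tfrac{\beta^2}2 s_n+(1+\beta\epsilon)\log\log(n+1)$) gives
\[
E^{x_0}|N_{n-s_n}^{\ell_n+1}|\ \ge\ \mathrm e^{-\beta-\frac{\beta^2}{2}}\,(\log(n+1))^{1+\beta\epsilon}\,\mathrm e^{-\beta|x_0|-\frac{\beta^2}{2}s_n}\ \Phi\!\Big(\tfrac{\beta(n-s_n)-|x_0|-\ell_n-1}{\sqrt{n-s_n}}\Big);
\]
since $s_n=o(n)$ and $|x_0|\le\beta s_n=o(n)$, the argument of $\Phi$ tends to $+\infty$ uniformly over the admissible $x_0$, so the $\Phi$-factor exceeds $\tfrac12$ for all large $n$, whence $E^{x_0}|N_{n-s_n}^{\ell_n+1}|\ge c_1(\log n)^{1+\beta\epsilon}\mathrm e^{-\beta|x_0|-\beta^2s_n/2}$ for a constant $c_1>0$.

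For the second moment, Proposition~\ref{second_moment_prop} gives $E^{x_0}[|N_{n-s_n}^{\ell_n+1}|^2]\le E^{x_0}|N_{n-s_n}^{\ell_n+1}|+8\,\mathrm e^{-\beta|x_0|-2\beta(\ell_n+1)+\beta^2(n-s_n)}$, and dividing the error term by the mean lower bound above yields a constant times $(\log n)^{1+\beta\epsilon}\mathrm e^{-\beta^2 s_n/2}$ — crucially independent of $x_0$, because the $\mathrm e^{\pm\beta|x_0|}$ factors cancel. Under the hypothesis on $(s_n)$ this stays bounded (and for the canonical choice $s_n=\sqrt{n-1}$ it tends to $0$, since $\mathrm e^{-\beta^2 s_n/2}$ then dominates every power of $\log n$), so $E^{x_0}[|N_{n-s_n}^{\ell_n+1}|^2]\le c_3\,E^{x_0}|N_{n-s_n}^{\ell_n+1}|$ for all large $n$. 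Paley--Zygmund then gives $P^{x_0}(|N_{n-s_n}^{\ell_n+1}|>0)\ge c_3^{-1}E^{x_0}|N_{n-s_n}^{\ell_n+1}|\ge (c_1/c_3)(\log n)^{1+\beta\epsilon}\mathrm e^{-\beta|x_0|-\beta^2 s_n/2}$, and combining with the reduction proves the Proposition with $C=qc_1/c_3$.

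The two moment computations are routine once one substitutes $t=n-s_n$, $x=\ell_n+1$ and simplifies the exponents. The point requiring care — and the only place the full strength of the hypotheses $s_n=o(n)$ and $(\log n)\mathrm e^{-\beta^2 s_n/2}\to0$ enters — is the second moment: one must check that the ``pairing'' contribution from Proposition~\ref{second_moment_prop} (which carries $(\log n)^{2(1+\beta\epsilon)}$ from squaring the mean, together with $\mathrm e^{-\beta^2 s_n}$) is swamped by the mean, so that on $\{|N_{n-s_n}^{\ell_n+1}|>0\}$ the count behaves essentially like a $0$--$1$ variable and the survival probability is comparable to the mean rather than to its square. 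The only other mild subtlety is making every estimate uniform over $|x_0|\le\beta s_n$, which works precisely because the $\mathrm e^{\pm\beta|x_0|}$ factors cancel between numerator and denominator throughout.
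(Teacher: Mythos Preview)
Your argument is correct and follows the same Paley--Zygmund template as the paper, but with a slightly different execution. The paper works directly with the path-constrained count $|\tilde N^{n,\epsilon}_{n-s_n}|$: it lower-bounds the first moment via the many-to-one lemma (conditioning on the increment $\xi^{\inf}_{n-s_n+1}$ and then applying Proposition~\ref{expectation} at time $n-s_n$), and upper-bounds the second moment by passing to the larger set $N_{n-s_n+1}^{\ell_n}\supseteq \tilde N^{n,\epsilon}_{n-s_n}$ and invoking Lemma~\ref{many_to_two} together with Proposition~\ref{expectation2}. Your preliminary reduction to the single-time count $|N_{n-s_n}^{\ell_n+1}|$ --- via the observation that a particle above $\ell_n+1$ at time $n-s_n$ stays above $\ell_n$ for one more unit of time with probability $\ge q$, and that no branching can occur along such a path since it avoids the origin --- is a neat shortcut that lets you apply Propositions~\ref{expectation} and~\ref{second_moment_prop} off the shelf, bypassing any direct computation with path functionals. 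Both routes land on the same second-moment balance, namely that the pairing term is a constant times $(\log n)^{1+\beta\epsilon}\mathrm e^{-\beta^2 s_n/2}$ relative to the mean; both therefore need this quantity to be bounded, which is strictly a shade stronger than the stated hypothesis $(\log n)\mathrm e^{-\beta^2 s_n/2}\to 0$ but is trivially satisfied whenever $s_n$ grows like a positive power of $n$ (in particular for the canonical choice $s_n=\sqrt{n-1}$), so your hedge on this point is well placed.
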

\begin{proof}
Let $\epsilon$, $(s_n)_{n \geq 1}$ and $x_0$ be as above. If we let 
\[
\xi^{\inf}_{n - s_n +1} := \inf_{s \in [n-s_n, n-s_n+1]} (\xi_s - \xi_{n - s_n}) \stackrel{d}{=} - | \mathcal{N}(0,1) |,
\] 
for the spine process $(\xi_t)_{t \geq 0}$ as in Subsection 3.1 then we have from Lemma \ref{many_to_one} 
\begin{align*}
E^{x_0} |\tilde{N}^{n, \epsilon}_{n - s_n}| = &E^{x_0} \Big[ \sum_{u \in N_{n - s_n + 1}} \mathbf{1}_{\big\{ \inf_{s \in [n - s_n, n - s_n +1]} X^u_s \geq \frac{\beta}{2}(n+1) - (\frac{1}{\beta} + \epsilon )\log\log (n+1) \big\}} \Big]\\
= &\tilde{E}^{x_0} \Big[ \mathrm{e}^{\beta \tilde{L}_{n - s_n + 1}} \mathbf{1}_{\big\{ \inf_{s \in [n - s_n, n - s_n +1]} \xi_s \geq \frac{\beta}{2}(n+1) - (\frac{1}{\beta} + \epsilon )\log\log (n+1) \big\}} \Big]\\
= &\tilde{E}^{x_0} \Big[ \mathrm{e}^{\beta \tilde{L}_{n - s_n}} \mathbf{1}_{\big\{ \xi_{n - s_n} + \xi^{\inf}_{n - s_n + 1} \geq \frac{\beta}{2}(n+1) - (\frac{1}{\beta} + \epsilon )\log\log (n+1) \big\}} \Big]
\end{align*}
for all $n$ sufficiently large which guarantees that $\frac{\beta}{2}(n+1) - \big(\frac{1}{\beta} + \epsilon \big)\log\log (n+1) > 0$ so that $\tilde{L}_{n -s_n + 1} = \tilde{L}_{n - s_n}$. 

Then using independence of $(\xi_s)_{s \in [0, n-s_n]}$ and $(\xi_s - \xi_{n-s_n})_{s \in [n-s_n, n-s_n+1]}$ together with identity \eqref{eq_expectation} we get
\begin{align*}
&\tilde{E}^{x_0} \Big[ \mathrm{e}^{\beta \tilde{L}_{n - s_n}} \mathbf{1}_{\big\{ \xi_{n - s_n} + \xi^{\inf}_{n - s_n + 1} \geq \frac{\beta}{2}(n+1) - (\frac{1}{\beta} + \epsilon )\log\log (n+1) \big\}} \Big]\\
\geq &\tilde{E}^{x_0} \Big[ \tilde{E}^{x_0} \Big( \mathrm{e}^{\beta \tilde{L}_{n - s_n}} \mathbf{1}_{\big\{ \xi_{n - s_n}   \geq \frac{\beta}{2}(n+1) - (\frac{1}{\beta} + \epsilon )\log\log (n+1) - \xi^{\inf}_{n - s_n + 1}\big\}} \Big\vert (\xi_s - \xi_{n-s_n})_{s \in [n-s_n, n-s_n+1]} \Big) \Big]\\
\geq &\tilde{E}^{x_0} \Big[ \mathrm{e}^{- \beta |x_0| - \beta \big(\frac{\beta}{2}(n+1) - (\frac{1}{\beta} + \epsilon )\log\log (n+1) - \xi^{\inf}_{n - s_n + 1}\big) + \frac{\beta^2}{2}(n - s_n)}\\ 
&\qquad\qquad\qquad\qquad \times \Phi \Big( \frac{\beta(n-s_n) - |x_0| - \big( \frac{\beta}{2}(n+1) - (\frac{1}{\beta} + \epsilon )\log\log (n+1) - \xi^{\inf}_{n - s_n + 1} \big)}{\sqrt{n - s_n}} \Big) \Big]\\
= &\tilde{E}^{x_0} \Big[ \mathrm{e}^{- \beta |x_0| - \frac{\beta^2}{2}s_n} \big( \log (n+1)\big)^{1 + \beta \epsilon} \mathrm{e}^{- \frac{\beta^2}{2}} \mathrm{e}^{\beta \xi^{\inf}_{n - s_n + 1}}\\
&\qquad\qquad\qquad\qquad \times \Phi \Big( \frac{\frac{\beta}{2}(n-1) - \beta s_n - |x_0| + (\frac{1}{\beta} + \epsilon )\log\log (n+1) + \xi^{\inf}_{n - s_n + 1} \big)}{\sqrt{n - s_n}} \Big)\Big].
\end{align*}
Then recalling that $s_n = o(n)$ and $|x_0| \leq \beta s_n$ we see that for all $n$ sufficiently large the argument of the $\Phi(\cdot)$ function above is $\geq \xi^{\inf}_{n - s_n + 1}$ and hence
\begin{align}
\label{inequality1}
E^{x_0} |\tilde{N}^{n, \epsilon}_{n - s_n}| \geq &\mathrm{e}^{- \beta |x_0| - \frac{\beta^2}{2}s_n} (\log n)^{1 + \beta \epsilon} \mathrm{e}^{- \frac{\beta^2}{2}} \tilde{E}^{x_0} \Big[  \mathrm{e}^{\beta \xi^{\inf}_{n - s_n + 1}} \Phi \big( \xi^{\inf}_{n - s_n + 1} \big) \Big]\nonumber\\
= &C_1 \mathrm{e}^{- \beta |x_0| - \frac{\beta^2}{2}s_n} (\log n)^{1 + \beta \epsilon},
\end{align}
 where
\[
C_1 = \mathrm{e}^{- \frac{\beta^2}{2}} \tilde{E}^{x_0} \Big[  \mathrm{e}^{\beta \xi^{\inf}_{n - s_n + 1}} \Phi \big( \xi^{\inf}_{n - s_n + 1} \big) \Big] = \mathrm{e}^{- \frac{\beta^2}{2}} \int_{\mathbb{R}} \mathrm{e}^{- \beta |x|} \Phi (- |x|) \frac{1}{\sqrt{2 \pi}} \mathrm{e}^{- \frac{x^2}{2}} \mathrm{d}x \in (0, \infty).
\]
Also, since $\tilde{N}^{n, \epsilon}_{n - s_n} \subseteq N_{n-s_n+1}^{\frac{\beta}{2}(n+1) - (\frac{1}{\beta} + \epsilon )\log\log (n+1)}$, we have that
\begin{align}
\label{ineq_a1}
E^{x_0} |\tilde{N}^{n, \epsilon}_{n - s_n}|^2 \leq &E^{x_0} \Big\vert N_{n-s_n+1}^{\frac{\beta}{2}(n+1) - (\frac{1}{\beta} + \epsilon )\log\log (n+1)} \Big\vert^2 \nonumber\\
= &E^{x_0} \Big\vert N_{n-s_n+1}^{\frac{\beta}{2}(n+1) - (\frac{1}{\beta} + \epsilon )\log\log (n+1)} \Big\vert \nonumber\\ 
& \qquad + 2 \tilde{E}^{x_0} \Big[ \int_0^{n-s_n+1} \Big( E \Big\vert N_{n-s_n+1 - \tau}^{\frac{\beta}{2}(n+1) - (\frac{1}{\beta} + \epsilon )\log\log (n+1)} \Big\vert \Big)^2 \mathrm{d} \big( \mathrm{e}^{\beta \tilde{L}_\tau}\big) \Big].
\end{align}
Then from \eqref{eq_expectation00},
\begin{align}
\label{ineq_a2}
E^{x_0} \Big\vert N_{n-s_n+1}^{\frac{\beta}{2}(n+1) - (\frac{1}{\beta} + \epsilon )\log\log (n+1)} \Big\vert 
\leq &\mathrm{e}^{- \beta |x_0| - \beta \big( \frac{\beta}{2}(n+1) - (\frac{1}{\beta} + \epsilon )\log\log (n+1) \big) +\frac{\beta^2}{2}(n - s_n + 1)} \nonumber\\
& \qquad + \Phi \Big( - \frac{\frac{\beta}{2}(n+1) - (\frac{1}{\beta} + \epsilon )\log\log (n+1) - x_0}{\sqrt{n-s_n+1}} \Big) \nonumber\\
\leq &\mathrm{e}^{- \beta |x_0| - \frac{\beta^2}{2}s_n} \big( \log (n+1) \big)^{1 + \beta \epsilon} \nonumber\\
& \qquad + \Phi \Big( - \frac{\beta}{2} \sqrt{n+1} + \frac{ (\frac{1}{\beta} + \epsilon )\log\log (n+1) + \beta s_n}{\sqrt{n-s_n+1}} \Big) \nonumber\\
\leq &C_2 \mathrm{e}^{- \beta |x_0| - \frac{\beta^2}{2}s_n} (\log n)^{1 + \beta \epsilon}
\end{align}
for any constant $C_2 > 1$ and all $n$ large enough. 

Also, for $\tau \in [0, n-s_n+1]$ and in the case $x_0 = 0$ we have from \eqref{eq_expectation0} that
\begin{align}
\label{ineq_a3}
E \Big\vert N_{n-s_n+1 - \tau}^{\frac{\beta}{2}(n+1) - (\frac{1}{\beta} + \epsilon )\log\log (n+1)} \Big\vert 
\leq &\mathrm{e}^{- \beta \big( \frac{\beta}{2}(n+1) - (\frac{1}{\beta} + \epsilon )\log\log (n+1) \big) +\frac{\beta^2}{2}(n - s_n + 1 - \tau)} \nonumber\\
\leq &C_3 \mathrm{e}^{- \frac{\beta^2}{2}s_n} (\log n)^{1 + \beta \epsilon} \mathrm{e}^{- \frac{\beta^2}{2} \tau}
\end{align}
for any constant $C_3 > 1$ and all $n$ large enough. 

Putting together inequalities \eqref{ineq_a1}, \eqref{ineq_a2}, \eqref{ineq_a3} and making use of Proposition \ref{expectation2} we get that 
\begin{align}
\label{ineq_b}
E^{x_0} |\tilde{N}^{n, \epsilon}_{n - s_n}|^2 \leq &C_2 \mathrm{e}^{- \beta |x_0| - \frac{\beta^2}{2}s_n} (\log n)^{1 + \beta \epsilon} \nonumber\\ 
& \qquad + 2 \tilde{E}^{x_0} \Big[ \int_0^{n-s_n+1} \Big( C_3 \mathrm{e}^{- \frac{\beta^2}{2}s_n} (\log n)^{1 + \beta \epsilon} \mathrm{e}^{- \frac{\beta^2}{2} \tau} \Big)^2 \mathrm{d} \big( \mathrm{e}^{\beta \tilde{L}_\tau}\big) \Big]\\
\leq &C_2 \mathrm{e}^{- \beta |x_0| - \frac{\beta^2}{2}s_n} (\log n)^{1 + \beta \epsilon} + 8(C_3)^2 \mathrm{e}^{- \beta |x_0| - \frac{\beta^2}{2} s_n} (\log n)^{1 + \beta \epsilon} \big( \mathrm{e}^{- \frac{\beta^2}{2} s_n} (\log n)^{1 + \beta \epsilon} \big)\nonumber\\
\leq & C_4 \mathrm{e}^{- \beta |x_0| - \frac{\beta^2}{2}s_n} (\log n)^{1 + \beta \epsilon}
\end{align}
for any constant $C_4 > C_2$ and $n$ large enough. It then follows from inequalities \eqref{inequality1}, \eqref{ineq_b} and Paley-Zygmund inequality that for all $n$ large enough
\[
P^{x_0} \big( |\tilde{N}^{n, \epsilon}_{n - s_n}| > 0 \big) \geq \frac{(C_1)^2}{C_4} \mathrm{e}^{- \beta |x_0| - \frac{\beta^2}{2}s_n} (\log n)^{1 + \beta \epsilon},
\]
which proves the result.
\end{proof}

%----------------------------------------------------------------------------------------------------
\subsection{Upper bound for \eqref{main_liminf}}

\begin{Proposition}
\label{prop_liminf_upper}
\begin{equation}
\label{main_liminf_upper}
\liminf_{t \to \infty} \frac{R_t - \frac{\beta}{2}t}{\log \log t} \leq - \frac{1}{\beta} \qquad P \text{-a.s.}
\end{equation}
\end{Proposition}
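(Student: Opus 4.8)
The plan is to prove, for each fixed $\delta\in(0,1/\beta)$, that the event
\[
D_\delta:=\Big\{R_t\le\tfrac\beta2 t-\big(\tfrac1\beta-\delta\big)\log\log t\ \text{ for arbitrarily large }t\Big\}
\]
has $P$-probability one; intersecting over $\delta=1/m$ ($m>\beta$) then gives \eqref{main_liminf_upper}. Fix such a $\delta$, set $t_k:=k^{1+\beta\delta}$, $\theta_k:=\tfrac\beta2 t_k-(\tfrac1\beta-\delta)\log\log t_k$ and $A_k:=\{R_{t_k}\le\theta_k\}=\{|N^{\theta_k}_{t_k}|=0\}$. The guiding observation is that by \eqref{eq_expectation0} one has $E|N^{\theta_k}_{t_k}|\le\mathrm e^{-\beta\theta_k+\frac{\beta^2}2 t_k}=(\log t_k)^{1-\beta\delta}$, so the expected number of particles above the low level $\theta_k$ is only a power of $\log t_k$; combining this with the branching property, with \eqref{eq_expectation00}, and with $\sum_{u\in N_s}\mathrm e^{-\beta|X^u_s|}=\mathrm e^{\frac{\beta^2}2 s}M_s$, one expects that for $s=o(t_k)$ not too small
\[
P(A_k\mid\mathcal F_s)=\prod_{u\in N_s}\Big(1-P^{X^u_s}\big(|N^{\theta_k}_{t_k-s}|>0\big)\Big)\approx\exp\big\{-(\log t_k)^{1-\beta\delta}M_s\big\},
\]
which tends to $0$, but only at the very slow rate of $\exp$ of a power of $\log$. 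In particular $\sum_kP(A_k)=\infty$, so a direct Borel--Cantelli bound is hopeless; and the events $A_k$ are strongly positively correlated (if $R_{t_k}$ is abnormally low then the configuration at time $t_k$ is compressed, which makes $R_{t_l}$ for nearby $t_l$ likely to be low as well), so a \emph{conditional} Borel--Cantelli argument would force the reference times to be so widely spaced that $\sum_kP(A_k)$ becomes summable after all. Hence one must run a genuine second-moment argument, in the spirit of Subsection 3.2.

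Concretely, I would work on blocks of indices $k\in\{n,n+1,\dots,2n\}$ and apply the Paley--Zygmund inequality to $\sum_{k=n}^{2n}\mathbf 1_{A_k}$, aiming to show
\[
\sum_{k=n}^{2n}P(A_k)\to\infty,\qquad \sum_{n\le k<l\le 2n}P\big(A_k\cap A_l\big)\le C\Big(\sum_{k=n}^{2n}P(A_k)\Big)^2
\]
with $C$ independent of $n$; Paley--Zygmund then yields $P\big(\bigcup_{k=n}^{2n}A_k\big)\ge\frac1{2C}$ for all large $n$, hence $P(A_k\ \text{i.o.})\ge\frac1{2C}>0$. As in Subsection 3.3, one first replaces $P(A_k)$ by $P(A_k,\mathcal A_m)$, where $\mathcal A_m$ is the $\mathcal F_{s_m}$-event that every particle alive at a time $s_m=o(t_m)$ lies in $[-\beta s_m,\beta s_m]$ and $M_{s_m}$ is neither too small nor too large; by \eqref{rightmost_as} and $M_{s_m}\to M_\infty\in(0,\infty)$ we have $P(\mathcal A_m\ \text{ev.})=1$, so this costs nothing. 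Conditioning on this good configuration, the divergence $\sum_kP(A_k,\mathcal A_m)\to\infty$ follows from \eqref{eq_expectation00}, the elementary inequality $1-x\ge\mathrm e^{-2x}$ for small $x$, and the fact that $(\log t_k)^{1-\beta\delta}=o(\log k)$ (so that $n\,\mathrm e^{-\mathrm{const}\cdot(\log 2n)^{1-\beta\delta}}\to\infty$), in complete analogy with the computation behind \eqref{condition1}.

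The hard part is the correlation bound. Writing, for $k<l$, $P(A_k\cap A_l)=E[\mathbf 1_{A_k}P(A_l\mid\mathcal F_{t_k})]$ and
\[
P(A_l\mid\mathcal F_{t_k})=\prod_{v\in N_{t_k}}\big(1-P^{X^v_{t_k}}(|N^{\theta_l}_{t_l-t_k}|>0)\big)\le\exp\big\{-\textstyle\sum_v P^{X^v_{t_k}}(|N^{\theta_l}_{t_l-t_k}|>0)\big\},
\]
one needs a lower bound on the single-particle probabilities $P^x(|N^{\theta_l}_{t_l-t_k}|>0)$ matching, up to a multiplicative constant close to $1$, the first-moment upper bound from Proposition \ref{expectation}; this is obtained by applying Paley--Zygmund once more at the level of a single particle, via the second-moment bound \eqref{second_moment_ineq}. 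The genuinely delicate point, which does not arise in Subsection 3.2, is that within a block the times $t_k$ and $t_l$ are comparable ($t_l/t_k$ stays bounded), so the ``reflection'' term in Proposition \ref{expectation} is no longer negligible and the relevant pieces of $E^x|N^{\theta_l}_{t_l-t_k}|$ depend sensitively on the ancestor's position $x$ at time $t_k$; keeping track of this, and of the number of particles near the front at time $t_k$ (which governs how strongly $A_k$ and $A_l$ are correlated), is exactly what Proposition \ref{second_moment_prop3} and the bookkeeping of its error terms $e_1,\dots,e_4$ for the present choice of parameters are designed to do, and I expect this to be the technical heart of the argument. Finally, having established $P(A_k\ \text{i.o.})>0$, a $0$--$1$ argument in the spirit of Proposition \ref{zero_one} --- or, equivalently, carrying out the whole Paley--Zygmund estimate conditionally on $\mathcal F_s$ and invoking L\'evy's $0$--$1$ law --- upgrades this to probability one, which completes the proof.
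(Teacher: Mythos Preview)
Your high-level plan --- block-wise Paley--Zygmund on $\sum_{k=n}^{2n}\mathbf{1}_{A_k}$, restricted to a good $\mathcal{F}_{s_m}$-event, followed by a $0$--$1$ upgrade --- matches the paper's, and your $0$--$1$ step is sound (here $\{\liminf\le -c\}\subset\{\liminf^{(1)}\le-c\}\cap\{\liminf^{(2)}\le-c\}$, so $P\le P^2$ and $P>0$ forces $P=1$). The gap is in the correlation inequality.

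You propose to access $P(A_k\cap A_l)$ via $E[\mathbf{1}_{A_k}P(A_l\mid\mathcal{F}_{t_k})]$ and control $P(A_l\mid\mathcal{F}_{t_k})$ through per-particle lower bounds at the large time $t_k$. This produces an upper bound of the form $\exp\{-c\,M_{t_k}(\log t_l)^{1-\beta\delta}\}$, but on $A_k$ you have no control over $M_{t_k}$ or the full configuration at $t_k$, and there is no evident way to compare $E[\mathbf{1}_{A_k}\exp\{-c\,M_{t_k}(\cdot)\}]$ with $P(A_k)P(A_l)$. Moreover, Proposition~\ref{second_moment_prop3} does not play the role you assign to it: it is an upper bound on $E^{x_0}[|N_s^x|\,|N_t^y|]$ for a \emph{single} ancestor at $x_0$, not a device for tracking position-dependent first moments across the whole population at time $t_k$. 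The paper uses it differently: working conditionally on $\mathcal{F}_{s_n}$ throughout (with $s_n=n$, $t_n=n^3$), one factorises
\[
P\big(A_k\cap A_l\,\big|\,\mathcal{F}_{s_n}\big)=\prod_{u\in N_{s_n}}P^{X^u_{s_n}}\big(|N^{y_k}_{t_k-s_n}|=0,\ |N^{y_l}_{t_l-s_n}|=0\big),
\]
and bounds each factor by inclusion--exclusion, with the cross term $P^{x_0}(|N^{y_k}|\,|N^{y_l}|>0)$ controlled via Markov's inequality and Proposition~\ref{second_moment_prop3} (this is packaged as Proposition~\ref{upper_bound_prop}). Since on $\mathcal{A}_n$ every $X^u_{s_n}$ lies in $[-\beta s_n,\beta s_n]$, these estimates are uniform over ancestors, and the product structure lets one compare directly with $P(A_k\mid\mathcal{F}_{s_n})P(A_l\mid\mathcal{F}_{s_n})$. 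In fact the paper proves the sharper statement $\gamma_n\to1$, so the conditional Paley--Zygmund ratio tends to $1$ and $P(\bigcup_{k=n}^{2n}A_k)\to1$ directly; no $0$--$1$ law is invoked.
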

\begin{proof}
It is sufficient to prove that for any $\epsilon \in (0, \frac{1}{\beta})$ 
\begin{equation}
\label{main_liminf_upper1}
\liminf_{t \to \infty} \frac{R_t - \frac{\beta}{2}t}{\log \log t} \leq - \frac{1}{\beta} + \epsilon \quad P \text{-a.s.}
\end{equation}
Let us take any such $\epsilon$ and fix it for the rest of the proof. If $(t_n)_{n \geq 1}$ is a deterministic sequence such that $t_n \nearrow \infty$ as $n \to \infty$ and 
\[
y_n = \frac{\beta}{2} t_n - (\frac{1}{\beta} - \epsilon) \log \log t_n \quad \text{ , }  n \geq 1
\]
then
\begin{align*}
P \Big( \liminf_{t \to \infty} \frac{R_t - \frac{\beta}{2}t}{\log \log t} \leq - \frac{1}{\beta} + \epsilon \Big) 
\geq &P \Big( \liminf_{n \to \infty} \frac{R_{t_n} - \frac{\beta}{2}t_n}{\log \log t_n} \leq - \frac{1}{\beta} + \epsilon \Big)\\
= &P \Big( \big\{ \big\vert N_{t_n}^{y_n} \big\vert = 0 \big\} \text{ i.o. } \Big)\\
= &\lim_{n \to \infty} P \Big( \bigcup_{k \geq n} \big\{ \big\vert N_{t_k}^{y_k} \big\vert = 0 \big\} \Big)\\
\geq &\limsup_{n \to \infty} P \Big( \bigcup_{k = n}^{2n} \big\{ \big\vert N_{t_k}^{y_k} \big\vert = 0 \big\} \Big).
\end{align*}
Then for any time $s_n < t_n$ and event $\mathcal{A}_n \in \mathcal{F}_{s_n}$ we further have that
\begin{align}
\label{main_liminf_upper2}
P \Big( \bigcup_{k = n}^{2n} \big\{ \big\vert N_{t_k}^{y_k} \big\vert = 0 \big\} \Big) = 
&P \Big( \sum_{k=n}^{2n} \mathbf{1}_{\{|N_{t_k}^{y_k}| = 0\}} > 0 \Big)\nonumber\\
\geq &E \Big[ \mathbf{1}_{\mathcal{A}_n} P \Big( \sum_{k=n}^{2n} \mathbf{1}_{\{|N_{t_k}^{y_k}| = 0\}} > 0 \ \big\vert \mathcal{F}_{s_n} \Big) \Big].
\end{align}
From conditional Cauchy-Schwarz inequality applied to
\[
\sum_{k=n}^{2n} \mathbf{1}_{\{|N_{t_k}^{y_k}| = 0\}} = \bigg( \sum_{k=n}^{2n} \mathbf{1}_{\{|N_{t_k}^{y_k}| = 0\}} \bigg) \mathbf{1}_{\big\{ \sum_{k=n}^{2n} \mathbf{1}_{\{|N_{t_k}^{y_k}| = 0\}} > 0\big\}}
\]
we have that
\begin{equation}
\label{cond_cs}
\bigg( E \Big( \sum_{k=n}^{2n} \mathbf{1}_{\{|N_{t_k}^{y_k}| = 0\}} \ \big\vert \mathcal{F}_{s_n} \Big) \bigg)^2 
\leq P \Big( \sum_{k=n}^{2n} \mathbf{1}_{\{|N_{t_k}^{y_k}| = 0\}} > 0 \ \big\vert \mathcal{F}_{s_n} \Big) 
E \bigg( \Big(\sum_{k=n}^{2n} \mathbf{1}_{\{|N_{t_k}^{y_k}| = 0\}} \Big)^2 \ \big\vert \mathcal{F}_{s_n} \bigg) \ P \text{-a.s.}
\end{equation}
Then noting that
\begin{align*}
E \bigg( \Big(\sum_{k=n}^{2n} \mathbf{1}_{\{|N_{t_k}^{y_k}| = 0\}} \Big)^2 \ \big\vert \mathcal{F}_{s_n} \bigg) 
\geq &P \Big( |N_{t_n}^{y_n}| = 0 \ \big\vert \mathcal{F}_{s_n} \Big)\\ 
= &\prod_{u \in N_{s_n}} P^{X^u_{s_n}} \Big( |N_{t_n - s_n}^{y_n}| = 0 \Big)\\ 
> &0 \quad P \text{-a.s.}
\end{align*}
we derive from \eqref{cond_cs} the conditional version of Paley-Zygmund inequality: 
\[
P \Big( \sum_{k=n}^{2n} \mathbf{1}_{\{|N_{t_k}^{y_k}| = 0\}} > 0 \ \big\vert \mathcal{F}_{s_n} \Big) \geq 
\frac{\bigg( E \Big( \sum_{k=n}^{2n} \mathbf{1}_{\{|N_{t_k}^{y_k}| = 0\}} \ \big\vert \mathcal{F}_{s_n} \Big) \bigg)^2}{E \bigg( \Big(\sum_{k=n}^{2n} \mathbf{1}_{\{|N_{t_k}^{y_k}| = 0\}} \Big)^2 \ \big\vert \mathcal{F}_{s_n} \bigg)} \qquad P \text{-a.s.} 
\] 
We may then substitute this in \eqref{main_liminf_upper2} to get
\begin{align}
\label{main_liminf_upper3}
&P \Big( \bigcup_{k = n}^{2n} \big\{ \big\vert N_{t_k}^{y_k} \big\vert = 0 \big\} \Big) \nonumber\\
\geq &E \Bigg[ \mathbf{1}_{\mathcal{A}_n} \frac{\Big( E \Big( \sum_{k=n}^{2n} \mathbf{1}_{\{|N_{t_k}^{y_k}| = 0\}} \ \big\vert \mathcal{F}_{s_n} \Big) \Big)^2}{E \Big( \Big(\sum_{k=n}^{2n} \mathbf{1}_{\{|N_{t_k}^{y_k}| = 0\}} \Big)^2 \ \big\vert \mathcal{F}_{s_n} \Big)} \Bigg] \nonumber\\
= &E \Bigg[ \mathbf{1}_{\mathcal{A}_n} \frac{\Big( \sum_{k=n}^{2n} P \Big( |N_{t_k}^{y_k}| = 0 \ \big\vert \mathcal{F}_{s_n} \Big) \Big)^2}{\sum_{k=n}^{2n} P \Big( |N_{t_k}^{y_k}| = 0 \ \big\vert \mathcal{F}_{s_n} \Big) + 2\sum_{n \leq k < l \leq 2n} P \Big( |N_{t_k}^{y_k}| = 0 , \ |N_{t_l}^{y_l}| = 0 \ \big\vert \mathcal{F}_{s_n} \Big)} \Bigg].
\end{align}
Let us suppose that the sequences of times $(s_n)_{n \geq 1}$ and $(t_n)_{n \geq 1}$ and the sequence of events $(\mathcal{A}_n)_{n \geq 1}$ can be chosen in such a way that
\begin{enumerate}
\item 
\begin{equation}
\label{condition_a}
P(\mathcal{A}_n) \to 1 \quad \text{ as } n \to \infty
\end{equation}
\item For all $n$, $k$ and $l$ such that $n \leq k < l \leq 2n$,
\begin{align}
\label{condition_b}
&P \Big( |N_{t_k}^{y_k}| = 0 , \ |N_{t_l}^{y_l}| = 0 \ \big\vert \mathcal{F}_{s_n} \Big) \nonumber\\
\leq &\gamma_n P \Big( |N_{t_k}^{y_k}| = 0 \ \big\vert \mathcal{F}_{s_n} \Big)
P \Big(|N_{t_l}^{y_l}| = 0 \ \big\vert \mathcal{F}_{s_n} \Big) \quad P (\cdot \ | \mathcal{A}_n) \text{-a.s.}
\end{align}
for some deterministic sequence $(\gamma_n)_{n \geq 1}$ such that $\gamma_n \to 1$ as $n \to \infty$.
\item For all $n \geq 1$,
\begin{equation}
\label{condition_c}
\sum_{k=n}^{2n} P \Big( |N_{t_k}^{y_k}| = 0 \ \big\vert \mathcal{F}_{s_n} \Big) \geq \theta_n \quad P (\cdot \ | \mathcal{A}_n) \text{-a.s.}
\end{equation}
for some deterministic sequence $(\theta_n)_{n \geq 1}$ such that $\theta_n \to \infty$ as $n \to \infty$.
\end{enumerate}
It would then follow from \eqref{main_liminf_upper3} that
\begin{align*}
&P \Big( \bigcup_{k = n}^{2n} \big\{ \big\vert N_{t_k}^{y_k} \big\vert = 0 \big\} \Big) \\
\geq &E \Bigg[ \mathbf{1}_{\mathcal{A}_n} \frac{\Big( \sum_{k=n}^{2n} P \Big( |N_{t_k}^{y_k}| = 0 \ \big\vert \mathcal{F}_{s_n} \Big) \Big)^2}{\sum_{k=n}^{2n} P \Big( |N_{t_k}^{y_k}| = 0 \ \big\vert \mathcal{F}_{s_n} \Big) + 2 \gamma_n \sum_{n \leq k < l \leq 2n} P \Big( |N_{t_k}^{y_k}| = 0 \ \big\vert \mathcal{F}_{s_n} \Big)
P \Big(|N_{t_l}^{y_l}| = 0 \ \big\vert \mathcal{F}_{s_n} \Big)} \Bigg]\\
\geq &E \Bigg[ \mathbf{1}_{\mathcal{A}_n} \frac{\Big( \sum_{k=n}^{2n} P \Big( |N_{t_k}^{y_k}| = 0 \ \big\vert \mathcal{F}_{s_n} \Big) \Big)^2}{\sum_{k=n}^{2n} P \Big( |N_{t_k}^{y_k}| = 0 \ \big\vert \mathcal{F}_{s_n} \Big) +  \gamma_n \Big( \sum_{k=n}^{2n} P \Big( |N_{t_k}^{y_k}| = 0 \ \big\vert \mathcal{F}_{s_n} \Big) \Big)^2} \Bigg]\\
\geq &P(\mathcal{A}_n) \frac{1}{\frac{1}{\theta_n} + \gamma_n}\\
\to &1
\end{align*}
as $n \to \infty$, which would establish the sought result.

Let us now take 
\[
t_n := n^3 \qquad , \ n \geq 1,
\]
\[
s_n := n \qquad , \ n \geq 1
\]
and
\[
\mathcal{A}_n := \big\{ (\log t_{2n})^{- \frac{\beta \epsilon}{2}} \leq M_{s_n} \leq (\log t_{2n})^{\frac{\beta \epsilon}{2}} , \ |X^u_{s_n}| \leq \beta s_n \text{ for all } u \in N_{s_n} \big\} \qquad , \ n \geq 1.
\]
Below we are going to check that \eqref{condition_a}, \eqref{condition_b} and \eqref{condition_c} are satisfied with this choice of $(t_n)_{n \geq 1}$, $(s_n)_{n \geq 1}$ and $(\mathcal{A}_n)_{n \geq 1}$.

\underline{Proof of \eqref{condition_a}:}

From \eqref{rightmost_as} and symmetry and also the fact that $M_{s_n}$ coverges almost surely to a strictly positive limit we have that 
\[
P(\mathcal{A}_n) \to 1 \quad \text{ as } n \to \infty.
\]

\underline{Proof of \eqref{condition_c}:}

Take any $n$ and $k$ such that $n \leq k \leq 2n$. Then by the Markov property, 
\[
P \Big( |N_{t_k}^{y_k}| = 0 \ \big\vert \mathcal{F}_{s_n} \Big) = \prod_{u \in N_{s_n}} P^{X^u_{s_n}} \Big( |N_{t_k - s_n}^{y_k}| = 0 \Big) \quad P \text{-a.s.}
\]
Also, from inequality \eqref{eq_expectation00},
\begin{align*}
P^{X^u_{s_n}} \Big( |N_{t_k - s_n}^{y_k}| > 0 \Big) \leq &E^{X^u_{s_n}} \big\vert N_{t_k - s_n}^{y_k} \big\vert\\
\leq &\mathrm{e}^{- \beta |X^u_{s_n}| - \beta y_k + \frac{\beta^2}{2}(t_k - s_n)} - \Phi \Big( - \frac{y_k - |X^u_{s_n}|}{\sqrt{t_k - s_n}}\Big)\\
= &\mathrm{e}^{- \beta |X^u_{s_n}| - \frac{\beta^2}{2} s_n} \big( \log t_k \big)^{1 - \beta \epsilon} - \Phi \Big( - \frac{y_k - |X^u_{s_n}|}{\sqrt{t_k - s_n}}\Big).
\end{align*}
Then on the event $\mathcal{A}_n$ we have that
\begin{align*}
&\mathrm{e}^{- \beta |X^u_{s_n}| - \frac{\beta^2}{2} s_n} \big( \log t_k \big)^{1 - \beta \epsilon} + \Phi \Big( - \frac{y_k - |X^u_{s_n}|}{\sqrt{t_k - s_n}}\Big)\\
= &\mathrm{e}^{- \beta |X^u_{s_n}| - \frac{\beta^2}{2} s_n} \big( \log t_k \big)^{1 - \beta \epsilon} \bigg( 1 + 
\mathrm{e}^{\beta |X^u_{s_n}| + \frac{\beta^2}{2} s_n} \big( \log t_k \big)^{- 1 + \beta \epsilon} \Phi \Big( - \frac{y_k - |X^u_{s_n}|}{\sqrt{t_k - s_n}}\Big) \bigg)\\
\leq &\mathrm{e}^{- \beta |X^u_{s_n}| - \frac{\beta^2}{2} s_n} \big( \log t_k \big)^{1 - \beta \epsilon} \bigg( 1 + 
\mathrm{e}^{\frac{3\beta^2}{2} s_n} \big( \log t_k \big)^{- 1 + \beta \epsilon} \Phi \Big( - \frac{y_k - \beta s_n}{\sqrt{t_k - s_n}}\Big) \bigg)\\
\leq &\theta_n' \mathrm{e}^{- \beta |X^u_{s_n}| - \frac{\beta^2}{2} s_n} \big( \log t_k \big)^{1 - \beta \epsilon},
\end{align*}
where
\[
\theta_n' := 1 + \mathrm{e}^{\frac{3\beta^2}{2} s_n} \big( \log t_n \big)^{- 1 + \beta \epsilon} \Phi \Big( - \frac{\frac{\beta}{2} t_n - (\frac{1}{\beta} - \epsilon)\log\log t_{2n}}{\sqrt{t_{2n} - s_n}}\Big) 
\to 1 \qquad \text{ as } n \to \infty.
\]
Thus, on $\mathcal{A}_n$, 
\begin{equation}
\label{ineq_000}
P^{X^u_{s_n}} \Big( |N_{t_k - s_n}^{y_k}| > 0 \Big) \leq \theta_n' \mathrm{e}^{- \beta |X^u_{s_n}| - \frac{\beta^2}{2} s_n} \big( \log t_k \big)^{1 - \beta \epsilon}
\end{equation}
and consequently
\[
P^{X^u_{s_n}} \Big( |N_{t_k - s_n}^{y_k}| = 0 \Big) \geq 1 - \theta_n' \mathrm{e}^{- \beta |X^u_{s_n}| - \frac{\beta^2}{2} s_n} \big( \log t_k \big)^{1 - \beta \epsilon}.
\]
Let us now note that for any $x^\ast \in (0, 1)$ and $x \in [0, x^\ast]$,
\[
1 - x \geq \exp \Big\{ \frac{\log(1 - x^\ast)}{x^\ast} x \Big\},
\]
which follows from the concavity of $\log (1-x)$ on the interval $[0, x^\ast]$. Furthermore,
\[
\frac{\log(1 - x^\ast)}{x^\ast} \to -1 \quad \text{ as } x^\ast \searrow 0.
\]
If we now take 
\[
x^\ast = \theta_n' \mathrm{e}^{- \frac{\beta^2}{2} s_n} \big( \log t_{2n} \big)^{1 - \beta \epsilon} 
\]
and 
\[
x = \theta_n' \mathrm{e}^{- \beta |X^u_{s_n}| - \frac{\beta^2}{2} s_n} \big( \log t_k \big)^{1 - \beta \epsilon} 
\]
then $P(\cdot \  | \mathcal{A}_n)$-a.s., for all $n$ sufficiently large that $\theta_n' \mathrm{e}^{- \frac{\beta^2}{2} s_n} \big( \log t_{2n} \big)^{1 - \beta \epsilon} < 1$, we have that 
\begin{align*}
P \Big( |N_{t_k}^{y_k}| = 0 \ \big\vert \mathcal{F}_{s_n} \Big)
\geq &\prod_{u \in N_{s_n}} \bigg( 1 - \theta_n' \mathrm{e}^{- \beta |X^u_{s_n}| - \frac{\beta^2}{2} s_n} \big( \log t_k \big)^{1 - \beta \epsilon} \bigg)\\
\geq &\prod_{u \in N_{s_n}} \exp \bigg\{ \frac{ \log \big( 1 - \theta_n' \mathrm{e}^{- \frac{\beta^2}{2} s_n} \big( \log t_{2n} \big)^{1 - \beta \epsilon} \big)}{\theta_n' \mathrm{e}^{- \frac{\beta^2}{2} s_n} \big( \log t_{2n} \big)^{1 - \beta \epsilon} } \theta_n' \mathrm{e}^{- \beta |X^u_{s_n}| - \frac{\beta^2}{2} s_n} \big( \log t_k \big)^{1 - \beta \epsilon} \bigg\}\\
\geq &\exp \big\{ \theta_n'' M_{s_n} \big( \log t_k \big)^{1 - \beta \epsilon} \big\},
\end{align*}
where
\[
\theta_n'' := \frac{ \log \big( 1 - \theta_n' \mathrm{e}^{- \frac{\beta^2}{2} s_n} \big( \log t_{2n} \big)^{1 - \beta \epsilon} \big)}{\theta_n' \mathrm{e}^{- \frac{\beta^2}{2} s_n} \big( \log t_{2n} \big)^{1 - \beta \epsilon} } \theta_n' \to {-1} \qquad \text{ as } n \to \infty.
\]
Then since $t_k \leq t_{2n}$ and since $M_{s_n} \leq (\log t_{2n})^{\frac{\beta \epsilon}{2}}$ on $\mathcal{A}_n$,  it follows that $P(\cdot \  
| \mathcal{A}_n)$-a.s., for all $n$ large enough, 
\[
\sum_{k = n}^{2n} P \Big( |N_{t_k}^{y_k}| = 0 \ \big\vert \mathcal{F}_{s_n} \Big) \geq n \exp \big\{ \theta_n'' \big( \log t_{2n} \big)^{1 - \frac{\beta \epsilon}{2}} \big\} \quad =: \theta_n,
\]
where
\[
\log \theta_n = \log n + \theta_n'' + \big( \log t_{2n} \big)^{1 - \frac{\beta \epsilon}{2}} \to \infty \qquad \text{ as } n \to \infty.
\]
This establishes \eqref{condition_c}.

\underline{Proof of \eqref{condition_b}:}

Take $n$, $k$ and $l$ such that $n \leq k < l \leq 2n$. Then from the Markov property we have
\begin{equation}
\label{eq_xxx}
P \Big( |N_{t_k}^{y_k}| = 0 , \ |N_{t_l}^{y_l}| = 0 \ \big\vert \mathcal{F}_{s_n} \Big) = 
\prod_{u \in N_{s_n}} P^{X^u_{s_n}} \Big( |N_{t_k - s_n}^{y_k}| = 0 , \ |N_{t_l - s_n}^{y_l}| = 0 \Big).
\end{equation}
We then rewrite each member of the above product as
\begin{align}
\label{eq_aaa}
&P^{X^u_{s_n}} \Big( |N_{t_k - s_n}^{y_k}| = 0 , \ |N_{t_l - s_n}^{y_l}| = 0 \Big)\nonumber\\
= &P^{X^u_{s_n}} \Big( |N_{t_k - s_n}^{y_k}| = 0 \Big) + P^{X^u_{s_n}} \Big( |N_{t_l - s_n}^{y_l}| = 0 \Big) - P^{X^u_{s_n}} \Big( \big\{ |N_{t_k - s_n}^{y_k}| = 0 \big\} \bigcup \big\{ |N_{t_l - s_n}^{y_l}| = 0 \big\} \Big)\nonumber\\
= &1 - P^{X^u_{s_n}} \Big( |N_{t_k - s_n}^{y_k}| > 0 \Big) - P^{X^u_{s_n}} \Big( |N_{t_l - s_n}^{y_l}| > 0 \Big) + P^{X^u_{s_n}} \Big( |N_{t_k - s_n}^{y_k}| |N_{t_l - s_n}^{y_l}| > 0 \Big). 
\end{align}
Proposition \ref{upper_bound_prop}, which we prove below, then says that there exists some positive sequence $(\eta_n)_{n \geq 1}$ converging to a finite limit such that, on the event $\mathcal{A}_n$,
\begin{equation}
\label{eq_bbb}
P^{X^u_{s_n}} \Big( |N_{t_k - s_n}^{y_k}| |N_{t_l - s_n}^{y_l}| > 0 \Big)
\leq \eta_n \mathrm{e}^{\beta |X^u_{s_n}|} P^{X^u_{s_n}} \Big( |N_{t_k - s_n}^{y_k}| > 0 \Big) P^{X^u_{s_n}} \Big( |N_{t_l - s_n}^{y_l}| > 0 \Big). 
\end{equation}
We also recall from \eqref{ineq_000} that for $n \leq k \leq 2n$, on the event $\mathcal{A}_n$,
\begin{align*}
P^{X^u_{s_n}} \Big( |N_{t_k - s_n}^{y_k}| > 0 \Big) \leq &\theta_n' \mathrm{e}^{ - \beta |X^u_{s_n}| - \frac{\beta^2}{2} s_n} \big( \log t_{2n} \big)^{1 - \beta \epsilon}\\
\leq &\frac{1}{2}
\end{align*}
for all $n$ sufficiently large and so 
\begin{equation}
\label{eq_ccc}
P^{X^u_{s_n}} \Big( |N_{t_k - s_n}^{y_k}| > 0 \Big) \leq 2 \theta_n' \mathrm{e}^{ - \beta |X^u_{s_n}| - \frac{\beta^2}{2} s_n} \big( \log t_{2n} \big)^{1 - \beta \epsilon} \Big( 1 -  P^{X^u_{s_n}} \Big( |N_{t_k - s_n}^{y_k}| > 0 \Big) \Big)
\end{equation}
for all $n$ sufficiently large. Combining \eqref{eq_aaa}, \eqref{eq_bbb} and \eqref{eq_ccc} we get that on $\mathcal{A}_n$, 
\begin{align}
\label{eq_yyy}
&P^{X^u_{s_n}} \Big( |N_{t_k - s_n}^{y_k}| = 0 , \ |N_{t_l - s_n}^{y_l}| = 0 \Big)\nonumber\\ 
\leq &1 - P^{X^u_{s_n}} \Big( |N_{t_k - s_n}^{y_k}| > 0 \Big) - P^{X^u_{s_n}} \Big( |N_{t_l - s_n}^{y_l}| > 0 \Big)\nonumber\\
&\qquad\qquad\qquad\qquad+ \eta_n \mathrm{e}^{\beta |X^u_{s_n}|} P^{X^u_{s_n}} \Big( |N_{t_k - s_n}^{y_k}| > 0 \Big) P^{X^u_{s_n}} \Big( |N_{t_l - s_n}^{y_l}| > 0 \Big)\nonumber\\
\leq &\bigg( 1 - P^{X^u_{s_n}} \Big( |N_{t_k - s_n}^{y_k}| > 0 \Big) \bigg) \bigg( 1 - P^{X^u_{s_n}} 
\Big( |N_{t_l - s_n}^{y_l}| > 0 \Big) \bigg)\nonumber\\ 
&\qquad\qquad\qquad\qquad + \eta_n \mathrm{e}^{\beta |X^u_{s_n}|} 
P^{X^u_{s_n}} \Big( |N_{t_k - s_n}^{y_k}| > 0 \Big) P^{X^u_{s_n}} \Big( |N_{t_l - s_n}^{y_l}| > 0 \Big)\nonumber\\
\leq &\big( 1 + \theta_n''' \mathrm{e}^{- \beta |X^u_{s_n}| - \beta^2 s_n} \big( \log t_{2n} \big)^{2 - 2 \beta \epsilon} \big)\bigg( 1 - P^{X^u_{s_n}} \Big( |N_{t_k - s_n}^{y_k}| > 0 \Big) \bigg) \bigg( 1 - P^{X^u_{s_n}} \Big( |N_{t_l - s_n}^{y_l}| > 0 \Big) \bigg)
\end{align}
for all $n$ sufficiently large and where $\theta_n''' = (2 \theta_n')^2 \eta_n$, $n \geq 1$ is a converging sequence. Therefore, substituting \eqref{eq_yyy} into \eqref{eq_xxx} and applying the trivial inequality $1+x \leq \mathrm{e}^x$, $x \in \mathbb{R}$, we get
\begin{align*}
&P \Big( |N_{t_k}^{y_k}| = 0 , \ |N_{t_l}^{y_l}| = 0 \ \big\vert \mathcal{F}_{s_n} \Big)\\ 
\leq &\prod_{u \in N_{s_n}} \bigg[ P^{X^u_{s_n}} \Big( |N_{t_k - s_n}^{y_k}| = 0 \Big) P^{X^u_{s_n}} 
\Big( |N_{t_l - s_n}^{y_l}| = 0 \Big) \bigg( 1 + \theta_n''' \mathrm{e}^{- \beta |X^u_{s_n}| - \beta^2 s_n} \big( \log t_{2n} \big)^{2 - 2\beta \epsilon}
\bigg) \bigg]\\
= &P \Big( |N_{t_k}^{y_k}| = 0 \ \big\vert \mathcal{F}_{s_n} \Big) P \Big( |N_{t_l}^{y_l}| = 0 \ \big\vert \mathcal{F}_{s_n} \Big) \prod_{u \in N_{s_n}} \bigg( 1 + \theta_n''' \mathrm{e}^{- \beta |X^u_{s_n}| - \beta^2 s_n} \big( \log t_{2n} \big)^{2 - 2\beta \epsilon}
\bigg)\\
\leq &P \Big( |N_{t_k}^{y_k}| = 0 \ \big\vert \mathcal{F}_{s_n} \Big) P \Big( |N_{t_l}^{y_l}| = 0 \ \big\vert \mathcal{F}_{s_n} \Big) \mathrm{e}^{\theta_n''' M_{s_n} \mathrm{e}^{- \frac{\beta^2}{2}s_n} ( \log t_{2n})^{2 - 2\beta \epsilon}}\\
\leq &\gamma_n P \Big( |N_{t_k}^{y_k}| = 0 \ \big\vert \mathcal{F}_{s_n} \Big) P \Big( |N_{t_l}^{y_l}| = 0 \ \big\vert \mathcal{F}_{s_n} \Big) \qquad P(\cdot \  | \mathcal{A}_n) \text{-a.s.}
\end{align*}
for all $n$ sufficiently large and where $\gamma_n = \exp \big\{ \theta_n''' \mathrm{e}^{- \frac{\beta^2}{2}s_n} ( \log t_{2n})^{2 - \frac{5\beta \epsilon}{2}} \big\} \to 1$ as $n \to \infty$. This verifies \eqref{condition_b} and finishes the proof of Proposition \ref{main_liminf_upper}.
\end{proof}
It remains to establish \eqref{eq_bbb}, which is dealt with by the following proposition.
\begin{Proposition}
\label{upper_bound_prop}
Let $t_n =  n^3$, $s_n = n$ and $y_n = \frac{\beta}{2} t_n - \big( \frac{1}{\beta} - \epsilon \big) \log \log t_n$, $n \geq 1$ as above. Then there exists a positive converging sequence $(\eta_n)_{n \geq 1}$ such that for all $k$, $l$ and $x_0$ with $n \leq k < l \leq 2n$ and $|x_0| \leq \beta s_n$, 
\begin{equation}
\label{upper_bound_prop_eq}
P^{x_0} \Big( |N_{t_k - s_n}^{y_k}| |N_{t_l - s_n}^{y_l}| > 0 \Big) \leq \eta_n \mathrm{e}^{\beta |x_0|} P^{x_0} \Big( |N_{t_k - s_n}^{y_k}| > 0 \Big) 
P^{x_0} \Big( |N_{t_l - s_n}^{y_l}| > 0 \Big).
\end{equation}
\end{Proposition}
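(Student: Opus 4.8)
The plan is to dominate the left-hand side of \eqref{upper_bound_prop_eq} by a second moment, estimate that second moment through Proposition \ref{second_moment_prop3}, and then pass back from first moments to non-extinction probabilities via the Paley--Zygmund inequality. Since $|N^{y_k}_{t_k - s_n}|$ and $|N^{y_l}_{t_l - s_n}|$ take values in $\{0, 1, 2, \dots\}$ we have $\mathbf{1}_{\{ab > 0\}} \le ab$, so $P^{x_0}( |N^{y_k}_{t_k - s_n}|\, |N^{y_l}_{t_l - s_n}| > 0 ) \le E^{x_0}[ |N^{y_k}_{t_k - s_n}|\, |N^{y_l}_{t_l - s_n}| ]$. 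For all $n$ large enough $0 < t_k - s_n < t_l - s_n$ and $0 \le y_k < y_l$ (using $t_n = n^3 \gg n = s_n$ and that $\tfrac{\beta}{2} t_n$ dominates $(\tfrac{1}{\beta} - \epsilon)\log\log t_n$), so Proposition \ref{second_moment_prop3} applies with $s = t_k - s_n$, $t = t_l - s_n$, $x = y_k$, $y = y_l$; together with the identity $\mathrm{e}^{- \beta y_j + \frac{\beta^2}{2}(t_j - s_n)} = \mathrm{e}^{- \frac{\beta^2}{2} s_n}(\log t_j)^{1 - \beta\epsilon}$, immediate from $y_j = \tfrac{\beta}{2} t_j - (\tfrac{1}{\beta} - \epsilon)\log\log t_j$, this gives
\begin{equation}
\label{upper_bound_prop_eq2}
E^{x_0}\Big[ |N^{y_k}_{t_k - s_n}|\, |N^{y_l}_{t_l - s_n}| \Big] \le 24\, \mathrm{e}^{- \beta |x_0| - \beta^2 s_n}(\log t_k)^{1 - \beta\epsilon}(\log t_l)^{1 - \beta\epsilon} + \sum_{i = 1}^{4} e_i\big(x_0,\, t_k - s_n,\, t_l - s_n,\, y_k,\, y_l\big).
\end{equation}

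Next I would make the first moments precise. By Proposition \ref{expectation}, uniformly over $n \le k \le 2n$ and $|x_0| \le \beta s_n$ and for $n$ large,
\[
E^{x_0} |N^{y_k}_{t_k - s_n}| = \big( 1 + o(1) \big)\, \mathrm{e}^{- \beta |x_0| - \frac{\beta^2}{2} s_n}(\log t_k)^{1 - \beta\epsilon},
\]
since the $\Phi$-factor in \eqref{eq_expectation} has argument $\tfrac{\beta(t_k - s_n) - |x_0| - y_k}{\sqrt{t_k - s_n}} \sim \tfrac{\beta}{2}\sqrt{t_k} \to \infty$, while the additional $\mathbf{1}_{\{x_0 \ge 0\}}$ contribution is super-exponentially small relative to this main term. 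Hence the leading term of \eqref{upper_bound_prop_eq2} equals $\big( 24 + o(1) \big)\, \mathrm{e}^{\beta |x_0|}\, E^{x_0} |N^{y_k}_{t_k - s_n}|\, E^{x_0} |N^{y_l}_{t_l - s_n}|$, exactly the shape required in \eqref{upper_bound_prop_eq}.

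It then remains to check that the four terms $e_i(x_0, t_k - s_n, t_l - s_n, y_k, y_l)$ in \eqref{upper_bound_prop_eq2} contribute only $o(1)\, \mathrm{e}^{\beta |x_0|}\, E^{x_0} |N^{y_k}_{t_k - s_n}|\, E^{x_0} |N^{y_l}_{t_l - s_n}|$, i.e.\ $o(1)\, \mathrm{e}^{- \beta |x_0| - \beta^2 s_n}(\log t_k)^{1 - \beta\epsilon}(\log t_l)^{1 - \beta\epsilon}$. This is done by the very same chain of Gaussian-tail estimates as in the ``Proof of \eqref{condition2}'' in Subsection 3.2, now with $t_n = n^3$, $s_n = n$ and the \emph{lower} levels $y_k, y_l$: using $t_l - t_k \ge 3 n^2$, $y_k / \sqrt{t_k - s_n} \gtrsim n^{3/2}$ and a few elementary inequalities, each $e_i$ acquires a factor $\Phi(-c\, n^{\rho})$ with some $\rho > 0$, which decays faster than any power of $\log n$; the one term carrying an exponentially large prefactor, namely $e_2$ with its $\mathrm{e}^{\frac{\beta^2}{2} t_k}$, is handled as in Subsection 3.2 by completing the square inside the Gaussian tail, after which it too is $o\big( \mathrm{e}^{- \beta |x_0| - \beta^2 s_n}(\log t_k)^{1-\beta\epsilon}(\log t_l)^{1-\beta\epsilon} \big)$. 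I expect this book-keeping --- in particular checking that all these bounds hold \emph{uniformly} over $n \le k < l \le 2n$, so that a single sequence $(\eta_n)$ works for all admissible $k,l$ --- to be the one genuinely technical, though conceptually routine, point.

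Finally, to replace the first moments by non-extinction probabilities I would apply the Paley--Zygmund inequality: Proposition \ref{second_moment_prop} together with the first-moment formula above gives $E^{x_0}\big[ |N^{y_k}_{t_k - s_n}|^2 \big] \le \big( 1 + o(1) \big) E^{x_0} |N^{y_k}_{t_k - s_n}|$, where the $o(1)$ is of order $\mathrm{e}^{- \frac{\beta^2}{2} s_n}(\log t_k)^{1 - \beta\epsilon} \to 0$, whence $P^{x_0}\big( |N^{y_k}_{t_k - s_n}| > 0 \big) \ge \big( 1 + o(1) \big)^{-1} E^{x_0} |N^{y_k}_{t_k - s_n}|$, and likewise with $l$ in place of $k$. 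Combining this with \eqref{upper_bound_prop_eq2}, the estimate of the leading term and the negligibility of the $e_i$'s yields \eqref{upper_bound_prop_eq} with a sequence $\eta_n = 24 + o(1)$, which converges (to $24$), as claimed.
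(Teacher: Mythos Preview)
Your proposal is correct and follows essentially the same route as the paper's proof: Markov's inequality to pass to the mixed moment, Proposition~\ref{second_moment_prop3} to bound it by a main term plus the $e_i$'s, first-moment asymptotics from Proposition~\ref{expectation}, and Paley--Zygmund together with Proposition~\ref{second_moment_prop} to convert first moments back into non-extinction probabilities. The paper carries out the four $e_i$ estimates explicitly rather than by analogy with Subsection~3.2, but the calculations are exactly of the kind you describe (Gaussian tails beating the exponential prefactors, with the square-completion trick for $e_2$), and your identification of the limiting constant $\eta_n \to 24$ matches the paper's $\eta_n = \eta_n''(\eta_n')^2$.
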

\begin{proof}
From Markov and Paley-Zygmund inequalities we have
\begin{align*}
&P^{x_0} \Big( |N_{t_k - s_n}^{y_k}| |N_{t_l - s_n}^{y_l}| > 0 \Big)\\
\leq &E^{x_0} \Big[ |N_{t_k - s_n}^{y_k}| |N_{t_l - s_n}^{y_l}| \Big]\\
\leq &E^{x_0} \Big[ |N_{t_k - s_n}^{y_k}| |N_{t_l - s_n}^{y_l}| \Big] P^{x_0} \Big( |N_{t_k - s_n}^{y_k}| > 0 \Big) 
P^{x_0} \Big( |N_{t_l - s_n}^{y_l}| > 0 \Big) \frac{E^{x_0} \big[ |N_{t_k - s_n}^{y_k}|^2 \big]}{\Big( E^{x_0} |N_{t_k - s_n}^{y_k}| \Big)^2} \frac{E^{x_0} \big[ |N_{t_l - s_n}^{y_k}|^2 \big]}{\Big( E^{x_0} |N_{t_l - s_n}^{y_k}| \Big)^2}.
\end{align*}
Below we are going to show that for all $k$, $l$ and $x_0$ with $n \leq k < l \leq 2n$ and $|x_0| \leq \beta s_n$,
\begin{equation}
\label{upper_bound_prop_eq1}
E^{x_0} \big[ |N_{t_k - s_n}^{y_k}|^2 \big] \leq \eta_n' E^{x_0} |N_{t_k - s_n}^{y_k}|
\end{equation}
and
\begin{equation}
\label{upper_bound_prop_eq2}
E^{x_0} \Big[ |N_{t_k - s_n}^{y_k}| |N_{t_l - s_n}^{y_l}| \Big] \leq \eta_n'' \mathrm{e}^{\beta |x_0|} E^{x_0} |N_{t_k - s_n}^{y_k}| E^{x_0} |N_{t_l - s_n}^{y_l}|
\end{equation}
for some positive converging sequences $(\eta_n')_{n \geq 1}$ and $(\eta_n'')_{n \geq 1}$. This will yield \eqref{upper_bound_prop_eq} with $\eta_n = \eta_n'' (\eta_n')^2$.

\underline{Proof of \eqref{upper_bound_prop_eq1}}:

From \eqref{eq_expectation}, 
\begin{align}
\label{upper_bound_prop_eq3}
E^{x_0} |N_{t_k - s_n}^{y_k}| \geq &\mathrm{e}^{- \beta |x_0| - \beta y_k + \frac{\beta^2}{2}(t_k - s_n)}
\Phi \Big( \frac{\beta(t_k - s_n) - |x_0| - y_k}{\sqrt{t_k - s_n}} \Big)\nonumber\\
= &\mathrm{e}^{- \beta |x_0| - \frac{\beta^2}{2} s_n} \big( \log t_k \big)^{1 - \beta \epsilon}
\Phi \Big( \frac{ \frac{\beta}{2}t_k - \beta s_n - |x_0| + (\frac{1}{\beta} - \epsilon)\log \log t_k}{\sqrt{t_k - s_n}} \Big)\nonumber\\
\geq &\eta_n''' \mathrm{e}^{- \beta |x_0| - \frac{\beta^2}{2} s_n} \big( \log t_k \big)^{1 - \beta \epsilon},
\end{align}
where
\[
\eta_n''' = \Phi \Big( \frac{ \frac{\beta}{2}t_n - 2 \beta s_n + (\frac{1}{\beta} - \epsilon)\log \log t_n}{\sqrt{t_{2n} - s_n}} \Big) \to 1 \quad \text{ as } n \to \infty.
\]
Also, from Proposition \ref{second_moment_prop} and then \eqref{upper_bound_prop_eq3}, 
\begin{align*}
E^{x_0} \big[ |N_{t_k - s_n}^{y_k}|^2 \big] \leq &E^{x_0} |N_{t_k - s_n}^{y_k}| + 8 \mathrm{e}^{- \beta |x_0| - 2 \beta y_k + \beta^2 (t_k - s_n)}\\
= &E^{x_0} |N_{t_k - s_n}^{y_k}| + 8 \mathrm{e}^{- \beta |x_0| - \beta^2 s_n} (\log t_k)^{2 - 2\beta \epsilon}\\
\leq &E^{x_0} |N_{t_k - s_n}^{y_k}| \Big( 1 + \frac{8}{\eta_n'''} \mathrm{e}^{- \frac{\beta^2}{2}s_n} (\log t_k)^{1 - \beta \epsilon}\Big)\\
\leq &\eta_n' E^{x_0} |N_{t_k - s_n}^{y_k}|, 
\end{align*}
where
\[
\eta_n' = 1 + \frac{8}{\eta_n'''} \mathrm{e}^{- \frac{\beta^2}{2}s_n} (\log t_{2n})^{1 - \beta \epsilon} \to 1 \quad \text{ as } n \to \infty.
\]

\underline{Proof of \eqref{upper_bound_prop_eq2}}:

From Proposition \ref{second_moment_prop3} and inequality \eqref{upper_bound_prop_eq3} we have that 
\begin{align*}
E^{x_0} \Big[ \big\vert N_{t_k - s_n}^{y_k} \big\vert \big\vert N_{t_l - s_n}^{y_l} \big\vert \Big] \leq &24 \mathrm{e}^{- \beta |x_0| - \beta y_k - \beta y_l + \frac{\beta^2}{2}(t_k-s_n) + \frac{\beta^2}{2}(t_l-s_n)}\\ 
&\qquad\qquad\qquad + \sum_{i = 1}^4 e_i(x_0, t_k - s_n, t_l - s_n, y_k, y_l)\\
\leq &\frac{24}{(\eta_n''')^2} \mathrm{e}^{\beta |x_0|} E^{x_0} \big\vert N_{t_k-s_n}^{y_k} \big\vert E^{x_0} \big\vert N_{t_l-s_n}^{y_l} \big\vert\\ 
&\qquad\qquad\qquad + \sum_{i = 1}^4 e_i(x_0, t_k - s_n, t_l - s_n, y_k, y_l).
\end{align*}
Below we are going to check that the contribution from the $e_i$ terms is negligible.

Firstly, 
\begin{align*}
&\mathrm{e}^{\frac{\beta^2}{2}s_n} (\log t_l)^{-1 + \beta \epsilon} \Phi \Big( - \frac{y_l - y_k}{\sqrt{t_l - t_k}} \Big)\\
 = &\mathrm{e}^{\frac{\beta^2}{2}s_n} (\log t_l)^{-1 + \beta \epsilon} \Phi \Big( - \frac{\beta}{2} \sqrt{t_l - t_k} + \big( \frac{1}{\beta} - \epsilon\big)  \frac{\log \log t_l - \log \log t_k}{\sqrt{t_l - t_k}} \Big)\\
\leq &\mathrm{e}^{\frac{\beta^2}{2}s_n} (\log t_{2n})^{-1 + \beta \epsilon} \Phi \Big( - \frac{\beta}{2} \sqrt{t_{n+1} - t_n} + \big( \frac{1}{\beta} - \epsilon\big)  \log \log t_{2n} \Big)\\
\leq &\mathrm{e}^{\frac{\beta^2}{2}n} \Big( \log \big((2n)^3\big) \Big)^{-1 + \beta \epsilon} \Phi \Big( - \frac{\beta}{2} n  + \big( \frac{1}{\beta} - \epsilon\big)  \log \log \big((2n)^3\big) \Big) \quad =: \eta_n^{(iv)} \quad (\to 0 \text{ as } n \to \infty)
\end{align*}
and hence
\begin{align*}
&e_1(x_0, t_k-s_n, t_l-s_n, y_k, y_l)\\ 
= &\bigg( \mathrm{e}^{- \beta y_l + \frac{\beta^2}{2}(t_l - t_k)} + \Phi \Big( - \frac{y_l-y_k}{\sqrt{t_l-t_k}} \Big) \bigg) E^{x_0} \big\vert N_{t_k - s_n}^{y_k} \big\vert\\
= &\mathrm{e}^{\beta |x_0|} \bigg( \mathrm{e}^{- \frac{\beta^2}{2}(t_k-s_n)} + \mathrm{e}^{\frac{\beta^2}{2}s_n} (\log t_l)^{-1 + \beta \epsilon} \Phi \Big( - \frac{y_l-y_k}{\sqrt{t_l-t_k}} \Big) \bigg) \mathrm{e}^{- \beta|x_0| - \frac{\beta^2}{2}s_n } \big( \log t_l \big)^{1 - \beta \epsilon}  E^{x_0} \big\vert N_{t_k - s_n}^{y_k} \big\vert\\
\leq &\mathrm{e}^{\beta |x_0|} \Big( \mathrm{e}^{- \frac{\beta^2}{2}(t_{2n}-s_n)} + \eta_n^{(iv)} \Big) \frac{1}{\eta_n'''} E^{x_0} \big\vert N_{t_l - s_n}^{y_l} \big\vert  E^{x_0} \big\vert N_{t_k - s_n}^{y_k} \big\vert\\
= &\eta_n^{(v)} \mathrm{e}^{\beta |x_0|} E^{x_0} \big\vert N_{t_k - s_n}^{y_k} \big\vert  E^{x_0} \big\vert N_{t_l - s_n}^{y_l} \big\vert,
\end{align*}
where 
\[
\eta_n^{(v)} = \frac{1}{\eta_n'''} \Big( \mathrm{e}^{- \frac{\beta^2}{2}(t_{2n}-s_n)} + \eta_n^{(iv)} \Big) \to 0 \quad \text{ as } n \to \infty.
\]
Also, 
\begin{align*}
\mathrm{e}^{\beta y_l - \frac{\beta^2}{2}(t_l - t_k)} \Phi \Big( - \frac{y_l}{\sqrt{t_l - t_k}} \Big) \leq  
&\mathrm{e}^{\beta y_l - \frac{\beta^2}{2}(t_l - t_k)} \frac{1}{\sqrt{2 \pi}} \frac{\sqrt{t_l - t_k}}{y_l} \mathrm{e}^{- \frac{1}{2} \frac{y_l^2}{t_l - t_k}}\\
= &\frac{1}{\sqrt{2 \pi}} \frac{\sqrt{t_l - t_k}}{y_l} \mathrm{e}^{- \frac{1}{2(t_l - t_k)} \big(y_l - \beta(t_l - t_k)\big)^2}\\
\leq &\frac{1}{\sqrt{2 \pi}} \frac{\sqrt{t_{2n}}}{y_n} \ =: \eta_n^{(vi)} \quad (\to 0 \text{ as } n \to \infty)
\end{align*}
and hence 
\begin{align*}
e_2(x_0, t_k - s_n, t_l - s_n, y_k, y_l) = &16 \mathrm{e}^{- \beta |x_0| - \beta y_k + \beta^2 (t_k-s_n)} \Phi \Big( - \frac{y_l}{\sqrt{t_l - t_k}} \Big)\\
= &16 \mathrm{e}^{\beta |x_0|} \mathrm{e}^{- \beta |x_0| - \beta y_k + \frac{\beta^2}{2}(t_k - s_n)} \mathrm{e}^{- \beta |x_0| - \beta y_l + \frac{\beta^2}{2}(t_l - s_n)}\\
&\qquad\qquad\qquad \times \mathrm{e}^{\beta y_l - \frac{\beta^2}{2}(t_l - t_k)} \Phi \Big( - \frac{y_l}{\sqrt{t_l - t_k}} \Big)\\
\leq &\frac{16 \eta_n^{(vi)}}{(\eta_n''')^2} \mathrm{e}^{\beta |x_0|} E^{x_0} \big\vert N_{t_k - s_n}^{y_k} \big\vert E^{x_0} \big\vert N_{t_l - s_n}^{y_l} \big\vert.
\end{align*}
Also,
\begin{align*}
&\mathrm{e}^{\beta y_k - \frac{\beta^2}{2}(t_k-s_n)} \Phi \Big( \frac{y_l - y_k}{\sqrt{t_l - t_k}} - \beta \sqrt{t_l - t_k} \Big)\\
= &\mathrm{e}^{\frac{\beta^2}{2}s_n} (\log t_k)^{-1 + \beta \epsilon} \Phi \Big( - \frac{\beta}{2} \sqrt{t_l - t_k} - \big( \frac{1}{\beta} - \epsilon \big) \frac{\log \log t_l - \log \log t_k}{\sqrt{t_l - t_k}} \Big)\\
\leq &\mathrm{e}^{\frac{\beta^2}{2}s_n} (\log t_n)^{-1 + \beta \epsilon} \Phi \Big( - \frac{\beta}{2} \sqrt{t_{n+1} - t_n} \Big)\\
\leq &\mathrm{e}^{\frac{\beta^2}{2}n} \big( \log (n^3) \big)^{-1 + \beta \epsilon} \Phi \big( - \frac{\beta}{2} n \big) \ =: \eta_n^{(vii)} \ (\to 0 \text{ as } n \to \infty)
\end{align*}
and hence 
\begin{align*}
e_3(x_0, t_k - s_n, t_l - s_n, y_k, y_l) = &\mathrm{e}^{- \beta |x_0 | - \beta y_l + \frac{\beta^2}{2}(t_l - s_n)} \Phi \Big( \frac{y_l - y_k}{\sqrt{t_l - t_k}} - \beta \sqrt{t_l - t_k} \Big)\\ 
= &\mathrm{e}^{\beta |x_0|} \mathrm{e}^{- \beta |x_0 | - \beta y_l + \frac{\beta^2}{2}(t_l - s_n)} \mathrm{e}^{- \beta |x_0 | - \beta y_k + \frac{\beta^2}{2}(t_k - s_n)}\\
&\qquad\qquad\qquad \times \mathrm{e}^{\beta y_k - \frac{\beta^2}{2}(t_k-s_n)} \Phi \Big( \frac{y_l - y_k}{\sqrt{t_l - t_k}} - \beta \sqrt{t_l - t_k} \Big)\\
\leq &\frac{\eta_n^{(vii)}}{(\eta_n''')^2} \mathrm{e}^{\beta |x_0|} E^{x_0} \big\vert N_{t_k - s_n}^{y_k} \big\vert E^{x_0} \big\vert N_{t_l - s_n}^{y_l} \big\vert.
\end{align*}
Finally,
\begin{align*}
&\mathrm{e}^{\beta |x_0| + \beta y_k + \beta y_l - \frac{\beta^2}{2}(t_k-s_n) - \frac{\beta^2}{2}(t_l-s_n)} \Phi \Big( - \frac{y_k - x_0}{\sqrt{t_k - s_n}} \Big)\\ 
= &\mathrm{e}^{\beta |x_0| + \beta^2 s_n} (\log t_k)^{- 1 + \beta \epsilon} (\log t_l)^{- 1 + \beta \epsilon} \Phi \Big( - \frac{\beta}{2} \frac{t_k}{\sqrt{t_k - s_n}}+ \frac{( \frac{1}{\beta} - \epsilon ) \log \log t_k + x_0}{\sqrt{t_k - s_n}}\Big)\\ 
\leq &\mathrm{e}^{2 \beta^2 s_n}(\log t_n)^{-2 + 2\beta \epsilon}\Phi \Big( - \frac{\beta}{2} \sqrt{t_n} + \frac{(\frac{1}{\beta} - \epsilon)\log \log t_{2n} + \beta s_n}{\sqrt{t_n - s_n}}\Big)\\ 
= &\mathrm{e}^{2 \beta^2 n} \big( \log (n^3) \big)^{-2 + 2 \beta \epsilon} \Phi \Big( - \frac{\beta}{2} n^{\frac{3}{2}} + \frac{(\frac{1}{\beta} - \epsilon)\log \log \big( (2n)^3 \big) + \beta n}{\sqrt{n^3 - n}} \Big) \quad =: \eta_n^{(viii)} \ (\to 0 \text{ as } n \to \infty)
\end{align*}
and hence
\begin{align*}
e_4(x_0, t_k - s_n, t_l - s_n, y_k, y_l) = &\mathrm{e}^{\beta |x_0|} \mathrm{e}^{- \beta |x_0 | - \beta y_l + \frac{\beta^2}{2}(t_l - s_n)} \mathrm{e}^{- \beta |x_0 | - \beta y_k + \frac{\beta^2}{2}(t_k - s_n)}\\
&\qquad\qquad \times \mathrm{e}^{\beta |x_0| + \beta y_k + \beta y_l - \frac{\beta^2}{2}(t_k-s_n) - \frac{\beta^2}{2}(t_l-s_n)} \Phi \Big( - \frac{y_k - x_0}{\sqrt{t_k - s_n}} \Big)\\
\leq &\frac{\eta_n^{(viii)}}{(\eta_n''')^2} \mathrm{e}^{\beta |x_0|} E^{x_0} \big\vert N_{t_k - s_n}^{y_k} \big\vert E^{x_0} \big\vert N_{t_l - s_n}^{y_l} \big\vert.
\end{align*}

\end{proof}

\noindent

\end{document}